\documentclass[11pt]{article}

\usepackage{amsmath,amssymb,amsthm}
\usepackage[margin=1in]{geometry}

\newtheorem{theorem}{Theorem}[section]
\newtheorem{proposition}[theorem]{Proposition}
\newtheorem{lemma}[theorem]{Lemma}
\newtheorem{corollary}[theorem]{Corollary}

\theoremstyle{definition}
\newtheorem{definition}[theorem]{Definition}
\newtheorem{example}[theorem]{Example}
\theoremstyle{remark}
\newtheorem{remark}[theorem]{Remark}

\newcommand{\QQ}{\mathbb Q}
\newcommand{\La}{\Lambda}
\newcommand{\A}{\mathcal A}
\newcommand{\R}{\mathcal R}
\newcommand{\Q}{\mathcal Q}
\newcommand{\D}{\mathcal D}
\newcommand{\Lc}{\mathcal L}
\newcommand{\LL}{\mathbf{L}}
\newcommand{\Ps}{\Psi}
\newcommand{\Id}{\operatorname{Id}}
\newcommand{\bT}{\mathbf{T}}
\newcommand{\downmap}{z\,\pi_-}
\newcommand{\Mq}{\mathsf q}
\newcommand{\Mt}{\mathsf t}
\newcommand{\ip}[2]{\left\langle #1,#2\right\rangle}

\title{On the Nazarov--Sklyanin Lax operator for Macdonald functions}
\author{Ryan Mickler}
\date{\today}

\begin{document}
\maketitle
\begin{abstract}
We study the Lax operator for Macdonald symmetric functions introduced by
Nazarov and Sklyanin in 2015, extending the spectral theory developed by Mickler--Moll in the
Jack case.  We extend the NS operator to a Lax operator acting on symmetric functions with coefficients in polynomials in an auxiliary variable,
and prove it has an orthogonal eigenbasis indexed
by partitions and their addable corners, with the corner contents as
eigenvalues, and leading terms given by Macdonald symmetric functions.  We obtain the Macdonald analogue of the author's earlier Jack box-sum identity
for Littlewood--Richardson coefficients.  We then strengthen this box-sum
identity to a two-parameter formula, proved using a finite-rank product
formula of Warnaar.  Its residues give explicit linear relations among
Macdonald Littlewood--Richardson coefficients.
\end{abstract}

\section{Introduction}\label{sec:intro}

Let $\La$ be the ring of symmetric functions in variables $x_i$.  Nazarov and
Sklyanin~\cite{Nazarov:2015ab} introduced a Lax operator $\Lc^{NS}$ on
$\La^\infty :=z^{-1}\La[z^{-1}]$ whose resolvent, projected to $\La$, generates a commuting family of operators
that acts diagonally on the Macdonald symmetric functions $J_\lambda$.  This is the
$\Mq,\Mt$ analogue of their Lax construction for Jack symmetric functions
\cite{Nazarov:2013}.  In joint work with Moll \cite{MicklerMoll:2023}, the
author developed the corresponding Jack spectral theory, which was then used
to prove Littlewood--Richardson identities by Lax-operator methods
\cite{Mickler:2023}.

In \eqref{eq:Laxdef} we construct a formally self-adjoint Lax operator
$\LL$ acting on $\La[z^{-1}]$, with a $\Mq$-dependent inner product.  We
use two projections: $\pi_0$ sends $z^{-1}$ to zero, while
$\pi_\infty$ sets all variables $x_i$ to zero.

\begin{theorem}[Spectral theorem; Propositions~\ref{prop:spectral},
\ref{prop:jackpsiexp} and~\ref{prop:orthogonal}]
There is an orthogonal basis of $\La[z^{-1}]$ indexed by pairs $(\lambda,s)$,
where $\lambda$ is a partition and $s\in\A_\lambda$ is an addable corner.
Its vectors $\Ps_\lambda^s$ satisfy
\[
\LL\Ps_\lambda^s=[s]\Ps_\lambda^s,
\qquad
\pi_0\Ps_\lambda^s=J_\lambda,
\]
where $[s]=\Mt^i\Mq^{-j}$ is the content of the (zero-based) corner $s=(i,j)$.
Furthermore, the integral Macdonald function decomposes as
\begin{equation}
J_\lambda = \sum_{s \in \A_\lambda} \tau_\lambda^s \Ps_\lambda^s,
\end{equation}
where $\tau_\lambda^s$ are the Kerov transition measures
(Definition~\ref{def:kerov}).
\end{theorem}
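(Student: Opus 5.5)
The approach follows the Jack-case argument of Mickler--Moll: realize $\LL$ as a rank-one perturbation of a diagonalizable operator, and read off its spectrum from a scalar resolvent function whose poles and zeros are the interlacing contents. Concretely, I will write $\LL$ on $\La[z^{-1}] = \La\oplus \La^\infty$ in block form. The $\La^\infty$ block is the Nazarov--Sklyanin operator $\Lc^{NS}$. The coupling to $\La$ goes through $\pi_0$ and $\pi_\infty$.

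The first step is to use the Nazarov--Sklyanin result that the resolvent
\[
\pi_0\,(u-\Lc^{NS})^{-1}\,\iota
\]
acts diagonally on $J_\lambda$ with eigenvalue a generating function. Here $\iota$ denotes the embedding $f\mapsto z^{-1}f$ or the analogous one. I then identify this generating function with the Kerov transition function
\[
\sum_{s\in\A_\lambda}\frac{\tau_\lambda^s}{u-[s]}
=\frac{\prod_{r\in\R_\lambda}(u-[r])}{\prod_{s\in\A_\lambda}(u-[s])},
\]
which is the $\Mq,\Mt$ Kerov function of Definition~\ref{def:kerov}. The identification comes from comparing with the explicit Macdonald eigenvalues (a $\Mq$-difference/Pieri computation). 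This gives Proposition~\ref{prop:jackpsiexp}-type information.

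For each $s\in\A_\lambda$ I then define the candidate eigenvector
\[
\Ps_\lambda^s := \tau_\lambda^{s}{}^{-1}\operatorname{Res}_{u=[s]}\,(u-\LL)^{-1}J_\lambda,
\]
or equivalently $\Ps_\lambda^s = J_\lambda + (\text{$z^{-1}$-terms})$ obtained by solving $(\LL-[s])\Ps=0$ with $\pi_0\Ps=J_\lambda$. Writing $\Ps = J_\lambda + \phi$ with $\phi\in\La^\infty$, the eigen-equation splits into two parts:
\begin{itemize}
\item the $\La^\infty$ component gives $\phi = ([s]-\Lc^{NS})^{-1}(\text{coupling applied to }J_\lambda)$;
\item the $\La$ component then becomes a scalar condition on $J_\lambda$, namely that $[s]$ is a zero of a secular function built from the resolvent.
\end{itemize}
Because the resolvent is diagonal on $J_\lambda$, this condition reduces to the statement that $[s]$ is a pole of the Kerov function, i.e. $s\in\A_\lambda$. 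This is Proposition~\ref{prop:spectral}. The main obstacle is showing that $([s]-\Lc^{NS})^{-1}$ makes sense on the relevant vector, so that $\phi$ lies in $\La[z^{-1}]$ (finitely many powers of $z^{-1}$, polynomial coefficients) rather than being a formal series. My plan is to work degree by degree: $\LL$ preserves the finite-dimensional subspaces graded by total degree (with $\deg z^{-1}=1$), so everything reduces to finite-dimensional linear algebra. There, the spectrum of $\Lc^{NS}$ on the relevant invariant subspace consists of contents of boxes $\ne[s]$ generically in $\Mq,\Mt$.

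Orthogonality (Proposition~\ref{prop:orthogonal}) follows from formal self-adjointness of $\LL$ with respect to the $\Mq$-dependent inner product:
\begin{itemize}
\item eigenvectors with distinct eigenvalues are orthogonal;
\item for different $\lambda$ with the same content, I separate them by the commuting family of Hamiltonians (Macdonald operators extended to $\La[z^{-1}]$) that commute with $\LL$ and distinguish $\lambda$ via $J_\lambda$ eigenvalues.
\end{itemize}
The basis property follows from a dimension count in each graded piece. The pairs $(\lambda,s)$ with $|\lambda|+$ (offset) $=n$ biject with $\bigoplus_k \La_{n-k}$ via the bijection between addable corners and adding a box, which matches $\dim$ counts since the number of addable corners is $1+$ the number of removable ones.

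Finally, the decomposition $J_\lambda=\sum_s\tau_\lambda^s\Ps_\lambda^s$ follows by summing residues. The function $(u-\LL)^{-1}J_\lambda$ decays like $u^{-1}J_\lambda$ at infinity, and its only poles are at $[s]$, $s\in\A_\lambda$, with residues $\tau_\lambda^s\Ps_\lambda^s$. So $J_\lambda=\sum_s\tau_\lambda^s\Ps_\lambda^s$. Applying $\pi_0$ also recovers $\sum_s\tau^s_\lambda=1$, which serves as a consistency check.
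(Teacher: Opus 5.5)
There is a genuine gap, at exactly the step you single out as the main obstacle, and your proposed fix does not work. Substituting the $\La$-component $\Q\phi=([s]-1)J_\lambda$ of the eigen-equation into the $\La^\infty$-component gives $([s]-\LL^+)\phi=[s]\,\D_\Mq^{-1}\Q^*J_\lambda$, equivalently $([s]\D_\Mq-\Lc^{NS})\phi=[s]\,\Q^*J_\lambda$. (The $\La^\infty$ block of $\LL$ is $\D_\Mq^{-1}(\Q^*\Q+\Lc^{NS})$, and the coupling runs through $\Q$ and $\D_\Mq^{-1}\Q^*$, not $\pi_\infty$.) This operator is genuinely singular on $\La^\infty_{|\lambda|}$, and not because of an accidental coincidence of parameters. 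Take $s=(1,2)\in\A_{(3,2)}$ and $r=(0,1)\in\R_{(2,1,1,1)}$; then $r^+=s$. By Corollary~\ref{cor:NSspectral}, $z^{-1}\Ps_{(1,1,1,1)}^{(0,1)}\in\La^\infty_5$ is an eigenvector of $\LL^+$ with eigenvalue $[s]$. Since the boxes themselves coincide, no genericity in $\Mq,\Mt$ helps.

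Perhaps by ``the relevant invariant subspace'' you mean the cyclic space of the coupling vector $v_\lambda:=\D_\Mq^{-1}\Q^*J_\lambda$. Then your claim is true (its spectrum is $\{[r^+]:r\in\R_\lambda\}$), but that is exactly what has to be proved, and nothing in your plan provides it. Nazarov--Sklyanin give only the compressed resolvent $I(u)=\Q(u\D_\Mq-\Lc^{NS})^{-1}\Q^*$, and the spectrum of $\LL^+$ is a consequence of the theorem, not an input. The missing idea is to upgrade regularity at $u=[s]$ of the scalar $\langle J_\lambda,I(u)J_\lambda\rangle_{\Mq,\Mt}=\langle v_\lambda,(u-\LL^+)^{-1}v_\lambda\rangle_\infty$ to regularity of the vector $(u-\LL^+)^{-1}v_\lambda$. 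The scalar regularity is what the Nazarov--Sklyanin formula gives. The paper makes the upgrade by positivity. For real $0<\Mq,\Mt<1$ the form $\langle\cdot,\cdot\rangle_\infty$ is positive definite and $\LL^+$ is self-adjoint, so the residue of the scalar at each eigenvalue is the squared norm of the corresponding component of $v_\lambda$. A Zariski-density argument then returns to $\QQ(\Mq,\Mt)$.

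The same hole reappears in your final step. The transfer-operator identity $\pi_0(u-\LL)^{-1}J_\lambda=u^{-1}T_\lambda(u)J_\lambda$ controls only the $\La$-component. It does not show that the vector $(u-\LL)^{-1}J_\lambda$ has only simple poles, located at the $[s]$ with $s\in\A_\lambda$, because a pole whose residue lies in $\La^\infty$ is invisible to $\pi_0$. If you apply the positivity argument to $\LL$ and $J_\lambda$ (which also yields diagonalizability), your residue definition does work, and $J_\lambda=\sum_s\tau_\lambda^s\Ps_\lambda^s$ follows by summing residues.

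Your Kerov identification is also off. $\sum_s\tau_\lambda^s/(u-[s])=T_\lambda(u)/u$ is not $\prod_r(u-[r])/\prod_s(u-[s])$. The product $\prod_{b\in\R^+_\lambda}(u-[b])/\prod_{a\in\A_\lambda}(u-[a])$ equals $T_\lambda(u)-1=\sum_s[s]\tau_\lambda^s/(u-[s])$, so it carries the weights $[s]\tau_\lambda^s$ and the outer corners $\R^+_\lambda$.

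Finally, for orthogonality between different $\lambda$ sharing an eigenvalue, no family of Hamiltonians commuting with $\LL$ on $\La[z^{-1}]$ has been constructed. Use the cyclic spaces $Z_\lambda$ instead, as the paper does. Since $\pi_0 f(\LL)J_\lambda\in\QQ(\Mq,\Mt)J_\lambda$ for every polynomial $f$, self-adjointness gives $\langle f(\LL)J_\lambda,g(\LL)J_\mu\rangle_\infty=\langle J_\lambda,\pi_0 f(\LL)g(\LL)J_\mu\rangle_{\Mq,\Mt}=0$ for $\lambda\ne\mu$.
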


As a consequence, we show that the modified
Nazarov--Sklyanin Lax operator $\LL^+=\D_\Mq^{-1}\Lc^{NS}$ on $\La^\infty$ has an
orthogonal eigenbasis indexed by partitions and their removable corners,
with the outer-corner contents as eigenvalues
(Corollary~\ref{cor:NSspectral}).  

We demonstrate several more properties of the full Lax operator $\LL$. Define the single-box factor
\[
\mathcal N(u)=
\frac{(1-u^{-1}[1,1])(1-u^{-1}[0,0])}
     {(1-u^{-1}[0,1])(1-u^{-1}[1,0])}
\]
and, for partitions $\mu,\nu$, the \emph{box-star factor}
\[
\bT_{\mu\star\nu}(u):=\prod_{x\in\mu,\,y\in\nu}\mathcal N(u/[x+y]),
\]
where $x+y$ is the coordinatewise sum of the zero-based boxes $x$ and $y$, so
that $[x+y]=[x][y]$.

\begin{theorem}[Product-resolvent formula; compare {\cite[Theorem~4.9]{Mickler:2023}}]\label{thm:main}
For all partitions $\mu,\nu$, addable corners $s\in\A_\mu$, $t\in\A_\nu$, the resolvent of the Macdonald Lax operator satisfies
\begin{equation}\label{eq:maintheorem}
\pi_\infty(u-\LL)^{-1}\bigl(\Ps_\mu^s\Ps_\nu^t\bigr)
=
\frac{\bT_{\mu\star\nu}(u)}{u-[s+t]}\;  \pi_\infty\bigl(\Ps_\mu^s\Ps_\nu^t\bigr).
\end{equation}
\end{theorem}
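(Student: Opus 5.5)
We propose to follow the strategy of \cite[Theorem~4.9]{Mickler:2023}, transplanted to the $\Mq,\Mt$ setting. The left-hand side of \eqref{eq:maintheorem} is a rational function of $u$. Its poles lie among the eigenvalues $[s']$ of the eigenvectors $\Ps_\kappa^{s'}$ that occur when $\Ps_\mu^s\Ps_\nu^t$ is expanded in the basis of Proposition~\ref{prop:spectral}.

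\textbf{Step 1: expand the product in the eigenbasis.} Because the $\Ps$ form an orthogonal eigenbasis, we can write
\[
\Ps_\mu^s\Ps_\nu^t=\sum_{\kappa,s'}c_{\mu\nu}^{\kappa}(s,t;s')\,\Ps_\kappa^{s'}.
\]
This gives
\[
\pi_\infty(u-\LL)^{-1}\bigl(\Ps_\mu^s\Ps_\nu^t\bigr)=\sum \frac{c\,\pi_\infty\Ps_\kappa^{s'}}{u-[s']}.
\]
So the left side is a rational function of $u$ that tends to $0$ at $u=\infty$, has only simple poles, and has residues given by projections onto eigenspaces.

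\textbf{Step 2: determine the right side.} We need to check that the right side has the same structure. The idea is to use a generating-function description of the resolvent. Define the ``spectral'' function
\[
\Phi_\lambda^s(u):=\pi_\infty(u-\LL)^{-1}\Ps_\lambda^s/\pi_\infty\Ps_\lambda^s=(u-[s])^{-1}.
\]
Next, use the Kerov transition-measure decomposition $J_\lambda=\sum\tau_\lambda^s\Ps_\lambda^s$. This gives
\[
\pi_\infty(u-\LL)^{-1}J_\lambda=\sum_s\tau_\lambda^s\pi_\infty(\Ps_\lambda^s)/(u-[s]),
\]
which is the ratio of products over addable and removable corners, i.e.\ a product of $\mathcal N(u/[x])$ over boxes $x\in\lambda$ times $1/(u-1)$.

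The key mechanism is that multiplication by the generating function of $\pi_\infty$ and the resolvent interact through a Cauchy-type kernel. Concretely, I would express $\pi_\infty(u-\LL)^{-1}(fg)$, for arbitrary symmetric $f,g\in\La[z^{-1}]$, via the Macdonald reproducing kernel as a bilinear pairing
\[
\langle f\otimes g,\ \Omega(u)\rangle,
\]
where $\Omega(u)$ is an explicit plethystic exponential in two alphabets.

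\textbf{Step 3: evaluate the pairing.} Evaluating on $f=J_\mu$, $g=J_\nu$ should reduce, through the box-sum identity already available, to a finite-rank specialization in which Warnaar's product formula computes the answer. That formula produces
\[
\prod_{x\in\mu,y\in\nu}\mathcal N(u/[x][y]).
\]
Inserting the corner variables $s,t$, i.e.\ passing from $J$ to $\Ps^s,\Ps^t$, amounts to taking residues in two auxiliary spectral parameters $v,w$ at $v=[s]$, $w=[t]$. Here I would use the two-parameter strengthening: the resolvent of the product, viewed as a function of $u,v,w$, factors as
\[
\bT_{\mu\star\nu}(u)\cdot\frac{1}{u-vw}
\]
times the individual spectral data. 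Taking the residue gives the pole $u=[s][t]=[s+t]$ and the prefactor $\pi_\infty(\Ps_\mu^s\Ps_\nu^t)$.

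\textbf{Main obstacle.} The hardest step is Step 2–3: establishing that the two-variable resolvent generating function factorizes exactly as $\bT_{\mu\star\nu}(u)/(u-vw)$. I would first try to prove it in finitely many variables $x_1,\dots,x_n$, where $\LL$ becomes a finite matrix of $\Mq$-difference operators. There, Warnaar's formula evaluates the relevant Macdonald integrals as products over pairs of boxes. Stability in $n$ then removes the finite-rank restriction.

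A consistency check is the case $\nu=\emptyset$, $t=(0,0)$. There $\Ps_\nu^t=1$ and $\bT=1$, so the identity reduces to Proposition~\ref{prop:spectral}. I would use this case to fix normalizations.
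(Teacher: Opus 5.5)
Your proposal has a genuine gap. Steps 2--3, which you yourself flag as the main obstacle, contain no mechanism linking the resolvent of a \emph{product} of eigenvectors to anything computable. Since $\LL$ is not multiplicative, $\Ps_\mu^s\Ps_\nu^t$ (or $\bigl((v-\LL)^{-1}J_\mu\bigr)\bigl((w-\LL)^{-1}J_\nu\bigr)$) is not controlled by $J_\mu J_\nu$. Warnaar's formula and the box-sum identity, by contrast, only see the Littlewood--Richardson expansion of $J_\mu J_\nu$, which is $\pi_0$-level data.

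The individual steps also do not hold up:
\begin{itemize}
\item The pairing $\langle f\otimes g,\Omega(u)\rangle$, with $\Omega(u)$ ``an explicit plethystic exponential'', is asserted without derivation.
\item The factorization you invoke in $(u,v,w)$ is circular: its double residue at $v=[s]$, $w=[t]$ is exactly the statement to be proved.
\item That factorization is also false as an identity of rational functions. For any nonzero $a,b$, the expression $\bT_{\mu\star\nu}(u)\,a(v)b(w)/(u-vw)$ has a pole along $vw=u$. The true function $\sum_{s,t}\tau_\mu^s\tau_\nu^t(v-[s])^{-1}(w-[t])^{-1}\pi_\infty(u-\LL)^{-1}(\Ps_\mu^s\Ps_\nu^t)$ has poles in $v$ only at the $[s]$.
\item The paper's ``two-parameter strengthening'' (Theorem~\ref{thm:maintwo}) concerns $\kappa_{u,v}$ on $\La$, not corner spectral parameters.
\item The box-sum identity is first derived \emph{from} the present theorem, so it is not ``already available'' unless you route it through Warnaar.
\item A smaller slip: by Corollary~\ref{cor:pi_infty_J}, $\pi_\infty(u-\LL)^{-1}J_\lambda=u^{-1}(S_\lambda(u)-1)\varpi_\lambda z^{-|\lambda|}$, not a box product times $1/(u-1)$.
\end{itemize}

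The missing idea is the normalized product defect $\beta(f,g)=fg-\LL^{-1}(\LL(f)\LL(g))$. Because $\mathbf Q^\dagger$ and $\D_{\Mq^{-1}\Mt}$ are algebra automorphisms, $\beta(f,g)=fg-\mathbf Q^{-1}((\mathbf Qf)(\mathbf Qg))$, so $\pi_\infty\beta=0$. The relation $\LL\beta=(\LL-[s][t])\,\widehat\Ps_\mu^s\widehat\Ps_\nu^t$ then turns the theorem into $\mathbf y_u(\beta(\widehat\Ps_\mu^s,\widehat\Ps_\nu^t))=\bT_{\mu\star\nu}(u)-1$ (Lemma~\ref{lem:y-product-defect}). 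This relation, not a residue, is where the factor $1/(u-[s+t])$ comes from. The paper proves the trace formula by induction on $\min(|\mu|,|\nu|)$, using:
\begin{itemize}
\item the recursion $\beta(f,g)=\beta(z\pi_-f,z^{-1}g)+(z\pi_-f)\Q(z^{-1}g)-\Q(\pi_-f)g$;
\item the hatted down-map and creation formulas, whose coefficients sum to $1$;
\item the hatted Kerov sum $\sum_a\widehat\tau_\gamma^a/(1-[r][a]/u)=\bT_{r\star\gamma}(u)-1$;
\item the $s$-independence of the resulting trace.
\end{itemize}

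If you want Warnaar to do the work instead, $\beta$ is still what is needed. The telescoping relation (Proposition~\ref{prop:telescope}) and the universal traces (Proposition~\ref{prop:traces}) give $\mathbf y_u(\beta)=\sum_\lambda\widehat g_{\mu\nu}^\lambda\Delta_u(\lambda)$ with no induction. The box-sum identity, obtained from Theorem~\ref{thm:maintwo} in the limit $v\to1$, then yields $\bT_{\mu\star\nu}(u)-1$. That is a valid alternative route, but it is not the one your proposal describes.
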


After normalizing by $\pi_\infty(\Ps_\mu^s\Ps_\nu^t)$, the dependence on the
corners $s,t$ is confined to the factor $(u-[s+t])^{-1}$.  This factor need
not give a genuine pole, as it may cancel against a zero of
$\bT_{\mu\star\nu}$ (Remark~\ref{rem:corner-cancellation}).  The proof, by
induction on the size of the smaller partition, is given in
Section~\ref{sec:proof}.

In the Jack case, \cite{Mickler:2023} proves a box-sum identity for
Littlewood--Richardson coefficients by computing three traces of the
derivation defect $\mathcal L(fg)-\mathcal L(f)g-f\mathcal L(g)$ of the Jack
Lax operator $\mathcal L$.  Here the corresponding object is the normalized
product defect $\beta(f,g)=fg-\LL^{-1}(\LL(f)\LL(g))$
(Definition~\ref{def:beta}).  Its three traces together yield the Macdonald
box-sum identity (Theorem~\ref{thm:boxsummain}), generalizing the Jack case.

We then strengthen the box-sum identity to a two-parameter identity.  Let
$\epsilon_v:\La\to\QQ(\Mq,\Mt,v)$,
$\epsilon_v(p_r)=(1-v^r)/(1-\Mt^r)$, be Macdonald's principal specialization
\cite[Chapter~VI, (6.16)]{Macdonald:1995}.  Define
$\kappa_{u,v}:\La\to\QQ(\Mq,\Mt,u,v)$ on the integral Macdonald basis by
\[
\kappa_{u,v}(J_\lambda)
:=\frac{\epsilon_u(J_\lambda)\,\epsilon_v(J_\lambda)}{\epsilon_{uv}(J_\lambda)},
\]
and extend $\kappa$ linearly to $\La$.
\begin{theorem}[Two-parameter box-sum identity]\label{thm:maintwo}
For all partitions $\mu,\nu$,
\begin{equation}\label{eq:kappa-multiplicativity}
\kappa_{u,v}(J_\mu J_\nu)
=\kappa_{u,v}(J_\mu)\kappa_{u,v}(J_\nu)
\bT_{\mu\star\nu}(uv).
\end{equation}
\end{theorem}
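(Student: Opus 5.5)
We will prove the identity first for the specialization $u=\Mt^{N}$, $v=\Mt^{M}$ with $N,M$ positive integers, and then extend to generic $u,v$ by rationality. Macdonald's principal specialization is $\epsilon_{\Mt^N}(f)=f(1,\Mt,\dots,\Mt^{N-1})$, and $\epsilon_{\Mt^N}(J_\lambda)=\prod_{x\in\lambda}(\Mt^{i'}-\Mq^{j'}\Mt^{N})$ up to the usual sign and power normalization (with $x=(i',j')$ zero-based; \cite[VI (8.8)]{Macdonald:1995}). Hence
\[
\kappa_{u,v}(J_\lambda)=\prod_{x\in\lambda}\frac{(1-u[x]^{-1}\cdot\ast)(1-v[x]^{-1}\cdot\ast)}{(1-uv[x]^{-1}\cdot\ast)\cdot(\ast)}
\]
is an explicit product over boxes, rational in $u,v$. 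Both sides of \eqref{eq:kappa-multiplicativity} are rational in $u,v$, since $J_\mu J_\nu$ has finitely many terms $\sum_\lambda c^\lambda_{\mu\nu}J_\lambda$. It therefore suffices to verify the identity on the Zariski-dense set $\{(\Mt^N,\Mt^M)\}$. (If necessary, we reparametrize with $u$ and $uv$ as the free variables.)

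For the finite-rank identity, we interpret $\epsilon_u\epsilon_v/\epsilon_{uv}$ through Warnaar's product formula. At $u=\Mt^N$, $v=\Mt^M$, the set $\{1,\dots,\Mt^{N+M-1}\}$ splits as $\{1,\dots,\Mt^{N-1}\}\cup \Mt^N\{1,\dots,\Mt^{M-1}\}$. We use the branching/evaluation formula: Warnaar's identity expresses $\sum_\lambda$ of $P_\lambda$ evaluated on a split alphabet, or equivalently a Cauchy-type sum weighted by $\epsilon_u(J_\lambda)\epsilon_v(J_\lambda)/\epsilon_{uv}(J_\lambda)$, as an explicit product. Concretely, we plan to show that the functional $\kappa_{u,v}$ satisfies a generating identity of the form
\[
\sum_{\lambda}\kappa_{u,v}(J_\lambda)\,\frac{P_\lambda(x)P_\lambda(y)}{\text{norm}}=\text{(product)}.
\]
Multiplicativity up to the factor $\bT$ then reduces to comparing coefficients of $P_\mu(x)P_\nu(x)$, using the Pieri/LR expansion. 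A more direct alternative is to realize $\kappa_{u,v}$ as a $\Mq$-integral (Kadell/Selberg-type) $\kappa_{u,v}(f)=\langle f\rangle_{N,M}$ over a finite alphabet. Warnaar's formula gives $\langle P_\mu P_\nu\rangle$ as a product over pairs of boxes. Dividing by $\langle P_\mu\rangle\langle P_\nu\rangle$ should leave exactly $\prod_{x\in\mu,y\in\nu}\mathcal N(uv/[x][y])$.

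The remaining step is to check that the resulting quotient equals $\bT_{\mu\star\nu}(uv)$. This is a telescoping computation: $\mathcal N(w)$ is a second mixed difference in the box coordinates, so products of $\mathcal N$ over boxes collapse to boundary (corner) terms. We will match these corner terms with the ratio of Warnaar's products.

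The main obstacle is identifying the precise form of Warnaar's finite-rank formula that yields $\kappa(J_\mu J_\nu)$ rather than $\kappa(J_\mu)$ alone, together with the normalizations ($J$ versus $P$, and the signs in $\epsilon$). As a consistency check, and possibly as an alternative route, the $u$-residue structure should agree with Theorem~\ref{thm:main} applied with $\pi_\infty$.
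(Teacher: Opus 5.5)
There is a genuine gap. The step you call ``the main obstacle'' is the entire content of the theorem, and none of the routes you sketch supplies it. The missing ingredient is Warnaar's finite-rank identity. For $P_\mu P_\nu=\sum_\gamma f^\gamma_{\mu\nu}P_\gamma$ it reads
\[
\sum_\gamma f^\gamma_{\mu\nu}P_\gamma(\Mt^\delta)\frac{(z\Mt^{-n})_\gamma}{(z)_\gamma}
=P_\mu(\Mt^\delta)P_\nu(\Mt^\delta)\prod_{i,j=1}^{n}\frac{(z\Mt^{1-i-j};\Mq)_{\mu_i+\nu_j}}{(z\Mt^{2-i-j};\Mq)_{\mu_i+\nu_j}} .
\]
What you did not see is how this matches $\kappa$. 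Put $z=u\Mt^n$. Then $P_\gamma(\Mt^\delta)=\epsilon_{\Mt^n}(P_\gamma)$ and $(z\Mt^{-n})_\gamma/(z)_\gamma=\prod_{b\in\gamma}(1-u/[b])/(1-u\Mt^n/[b])$, so the left side is exactly $\kappa_{u,\Mt^n}(P_\mu P_\nu)$.

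Consequently only $v$ needs to be specialized, to $v=\Mt^n$ with $n\ge|\mu|+|\nu|$, so that the $n$-variable structure constants are those of $\La$. The variable $u$ stays an indeterminate. Both sides of \eqref{eq:kappa-multiplicativity} have denominators of the form $1-c\,uv$, which cannot vanish at $v=\Mt^n$, so agreement for infinitely many $n$ finishes the proof. Your double specialization $u=\Mt^N$ is unnecessary, and it also forces you to track vanishing denominators $1-uv/[b]$. Your product formula for $\kappa$ is left with placeholders; Macdonald's evaluation gives $\Mt^{n(\lambda)}\prod_{b\in\lambda}(1-u/[b])(1-v/[b])/(1-uv/[b])$.

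Your telescoping remark is the right last step once this right-hand side is available. With $c=u\Mt^{n+1-i-j}$, the factor $(c;\Mq)_{\mu_i+\nu_j}/(\Mt c;\Mq)_{\mu_i+\nu_j}$ splits into three pieces:
a $\mu_i$-part, which telescopes over $j$ to $\kappa_{u,\Mt^n}(J_\mu)/\epsilon_{\Mt^n}(J_\mu)$;
a $\nu_j$-part, which telescopes over $i$ to $\kappa_{u,\Mt^n}(J_\nu)/\epsilon_{\Mt^n}(J_\nu)$;
and $\prod_{\rho<\mu_i,\,\sigma<\nu_j}\mathcal N(\Mt c\Mq^{\rho+\sigma})$, which is $\bT_{\mu\star\nu}(u\Mt^n)$ because $\Mt c\Mq^{\rho+\sigma}=u\Mt^n/[x+y]$.
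The hook normalizations convert $P$ to $J$ on both sides.

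The alternatives you float do not close the gap.
\begin{itemize}
\item A Cauchy-type kernel weighted by $\kappa_{u,v}(J_\lambda)$ only records the values of $\kappa$ on a basis. ``Comparing coefficients of $P_\mu(x)P_\nu(x)$'' is not meaningful, since $P_\mu P_\nu$ is not a basis element. The theorem is precisely about how $\kappa$ interacts with products.
\item The split alphabet at $u=\Mt^N$, $v=\Mt^M$ yields branching formulas for $\epsilon_{\Mt^{N+M}}$, but nothing that produces the ratio $\epsilon_u\epsilon_v/\epsilon_{uv}$.
\item The $\Mq$-Selberg idea is closer, since $\kappa_{u,\Mt^n}$ is a $\Mq$-Kadell-type average. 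But Kadell's formula evaluates only a single $P_\lambda$. The factorization of the average of $P_\mu P_\nu$ is an AFLT-type statement tied to the particular parameters for which the average equals $\kappa_{u,\Mt^n}$. Asserting that ``Warnaar's formula gives it as a product over pairs of boxes'' assumes the result.
\item Theorem~\ref{thm:main} cannot serve as an alternative route. Through $\pi_\infty$ it only sees the $v\to1$ limit, i.e.\ the $(1-v)^2$ coefficient, which is the one-parameter box-sum identity. The paper notes that a Lax-operator proof of the two-parameter identity remains open.
\end{itemize}
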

The residues of the full
identity give weighted linear relations among Macdonald
Littlewood--Richardson coefficients
(Corollaries~\ref{cor:kappa-double-residue} and~\ref{cor:kappa-root-residue}).

In Section~\ref{ssec:AFLTproof} we show that Theorem~\ref{thm:maintwo} is
equivalent to a finite-rank product formula of Warnaar
\cite[Proposition~3.1]{Warnaar:2005}.  A Lax-operator proof of
Theorem~\ref{thm:maintwo} remains open.  We also
relate the limit $v\to1$ to the normalization of the Lax eigenvectors and
express $\bT_{\mu\star\nu}(uv)$ as Macdonald's Cauchy kernel evaluated at the
alphabets of $\mu$ and $\nu$ scaled by $u$ and by $v$
(Lemma~\ref{lem:TasExp} and the paragraph following it).  This kernel form
describes the parameter dependence of the factor, while the two-parameter
identity itself is proved from Warnaar's formula.

\medskip\noindent
\textbf{Outline.}  Section~\ref{sec:background} fixes notation for symmetric
functions and partitions and reviews the Nazarov--Sklyanin construction.
Section~\ref{sec:reformulation} introduces the full Lax operator and proves its
spectral theorem.  Section~\ref{sec:ladder} computes the principal
specialization of its eigenvectors and proves the creation and down-map
formulas.  Section~\ref{sec:proof} introduces the normalized product defect
and proves its $\mathbf{y}$-trace formula, hence Theorem~\ref{thm:main}.
Section~\ref{sec:boxsum} completes the three-trace formula and deduces the
box-sum identity.  Section~\ref{sec:twoparam} introduces the two-parameter
strengthening, proves Theorem~\ref{thm:maintwo}, recovers the box-sum identity
as a limit, and derives the residue identities.

\section{Background}\label{sec:background}

\subsection{Symmetric functions and partitions}\label{ssec:notation}

Let $\La=\QQ(\Mq,\Mt)[p_1,p_2,\ldots]$ be the ring of symmetric functions with coefficients in rational functions of $\Mq,\Mt$, which we express in terms of the power sum symmetric functions $p_k = \sum_{i} x_i^k$, and for a partition $\mu$ we let $p_\mu = \prod_i p_{\mu_i}$.
The $\Mq,\Mt$-scalar product is
\[
\ip{p_\lambda}{p_\mu}_{\Mq,\Mt}
=
z_\lambda\delta_{\lambda\mu}
\prod_i\frac{1-\Mq^{\lambda_i}}{1-\Mt^{\lambda_i}},
\]
where $z_\lambda=\prod_{i\ge1}i^{m_i}m_i!$ and $m_i$ is the number of parts of
$\lambda$ equal to $i$.

We identify a partition $\lambda$ with its Young diagram, the set of boxes
$(i,j)$ with $0\le j<\lambda_{i+1}$, where rows and columns are indexed from $0$.
For a box $b=(i,j)$ of $\lambda$ we write
\[
a_\lambda(b)=\lambda_{i+1}-j-1,\qquad
\ell_\lambda(b)=\lambda'_{j+1}-i-1,\qquad
a'(b)=j,\qquad
\ell'(b)=i
\]
for its arm, leg, coarm and coleg; the coarm $a'(b)=j$ and coleg
$\ell'(b)=i$ are used for every box $b=(i,j)$, in particular for addable
corners.  We put
$n(\lambda)=\sum_{b\in\lambda}\ell'(b)=\sum_i(i-1)\lambda_i$.  The hook factors
are
\[
h_\lambda(b)=1-\Mq^{a_\lambda(b)}\Mt^{\ell_\lambda(b)+1},
\qquad
h'_\lambda(b)=1-\Mq^{a_\lambda(b)+1}\Mt^{\ell_\lambda(b)}.
\]
Let $P_\lambda=P_\lambda(\Mq,\Mt)$ be the Macdonald polynomial
\cite[Chapter~VI]{Macdonald:1995}, and let
\[
J_\lambda=\prod_{b\in\lambda}h_\lambda(b)\,P_\lambda
\]
be its integral form \cite[Chapter~VI, (8.3)]{Macdonald:1995}.  The $J_\lambda$
are orthogonal for $\ip{\cdot}{\cdot}_{\Mq,\Mt}$, with
$\|J_\lambda\|_{\Mq,\Mt}^2=\prod_{b\in\lambda}h_\lambda(b)h'_\lambda(b)$
\cite[Chapter~VI, (6.19) and (8.3)]{Macdonald:1995}.

A box $s\notin\lambda$ is an \emph{addable corner} of $\lambda$ if
$\lambda\cup\{s\}$ is a partition, and a box $r\in\lambda$ is a
\emph{removable corner} if $\lambda\setminus\{r\}$ is a partition.  We write
$\A_\lambda$ and $\R_\lambda$ for these sets, $\lambda+s$ and $\lambda-r$ for
the resulting partitions, and $b^+:=b+(1,1)$ for any box $b$.  The set of
\emph{outer corners} is
\[
\R_\lambda^+=\{r^+:r\in\R_\lambda\},
\]
and $|\A_\lambda|=|\R_\lambda|+1$.
Explicitly, let the distinct nonzero parts of $\lambda$ be
$L_1>L_2>\cdots>L_m$, with multiplicities $d_1,\ldots,d_m$, and put
$I_k=d_1+\cdots+d_k$, $I_0=0$ and $L_{m+1}=0$.  Then
\[
\A_\lambda=\{(I_k,L_{k+1}):0\le k\le m\},
\qquad
\R^+_\lambda=\{(I_k,L_k):1\le k\le m\}.
\]
We call these the \emph{corner coordinates} of $\lambda$.
The \emph{content} of a box $b=(i,j)$, whether or not it lies in $\lambda$, is
\[
[b]=[i,j]:=\Mt^i\Mq^{-j},
\]
so that $[a+b]=[a][b]$ and $[a-b]=[a]/[b]$ for the coordinatewise sum and
difference of boxes.  Since $\Mq,\Mt$ are
indeterminates, distinct boxes have distinct contents.

\subsection{The Nazarov--Sklyanin Lax operator}\label{ssec:NSlax}

We use notation from Nazarov--Sklyanin~\cite{Nazarov:2015ab} where
possible, and $P_\lambda$ and $J_\lambda$ denote Macdonald
polynomials\footnote{In \cite{Nazarov:2015ab}, $P_\lambda$ and $Q_\lambda$ are
Hall--Littlewood functions and the Macdonald functions are written
$M_\lambda$. For the Jack case, Mickler--Moll \cite{MicklerMoll:2023}
use the power series variable $w:=z^{-1}$.}.
Define the generating series
\[
Q(z)=\sum_{n\geq0}Q_nz^n
=
\exp\left(\sum_{n\geq1}\frac{1-\Mt^n}{n}p_nz^n\right),
\]
and let \(Q_n^*\) be the adjoint to multiplication by \(Q_n\), that is
\[
Q^*(z)=\sum_{n\geq0}Q_n^*z^n
=
\exp\left(\sum_{n\geq1}\frac{1-\Mq^n}{n}p_n^\perp z^n\right),
\qquad
p_n^\perp=n\frac{\partial}{\partial p_n}.
\]

Let $\La^\infty=z^{-1}\La[z^{-1}]$, so $\La[z^{-1}] = \La \oplus \La^\infty$.
For a Laurent series $f$ in $z$ with coefficients in $\La$, let $\pi_-f$, $\pi_0f$ and $\pi_{\le0}f$ be
its parts with negative, zero and nonpositive powers of $z$.  Define the
multiplication operator
\[
\D_x(z^{-m}f)=x^m z^{-m}f.
\]

\begin{definition}[Nazarov--Sklyanin Macdonald Lax operator~\cite{Nazarov:2015ab}] \label{def:nsorigoperator}
\[
\Lc^{NS}=Q^*(z^{-1}) \D_\Mt \pi_- Q(z) \in \operatorname{End}(\La^{\infty}).
\]
\end{definition}
Nazarov and Sklyanin also use the two maps
\[
\Q:\La^\infty\to\La,\qquad
\Q f=\pi_0 Q(z)f(z),
\]
and
\[
\Q^*:\La\to\La^\infty,\qquad
\Q^*g=(Q^*(z^{-1})-1)g = \pi_- Q^*(z^{-1}) g,
\]
and the scalar resolvent
\begin{equation}\label{def:resolvent}
I(u):=\Q(u\D_\Mq-\Lc^{NS})^{-1}\Q^*.
\end{equation}

In \cite{Nazarov:2014}, Nazarov and Sklyanin introduced `Macdonald' operators
\(A^{(k)}\) on \(\La\), \(k=0,1,2,\ldots\), via the series
\[
A(u)=\sum_{k\geq0}\frac{A^{(k)}}{(u;\Mt^{-1})_k},
\qquad
(u;\Mt^{-1})_k=\prod_{j=0}^{k-1}(1-u\Mt^{-j}),
\]
which is diagonal on Macdonald polynomials:
\begin{equation}\label{eq:Aeigen}
A(u)P_\lambda=A_\lambda(u)P_\lambda,
\qquad
A_\lambda(u)=
\prod_{i\geq1}
\frac{\Mq^{-\lambda_i}-u\Mt^{1-i}}{1-u\Mt^{1-i}};
\end{equation}
see also \cite[(1.19)--(1.20)]{Nazarov:2015ab}.
\begin{theorem}[Nazarov--Sklyanin, {\cite[Theorem~1]{Nazarov:2015ab}}]\label{thm:nsmain}
The scalar resolvent \eqref{def:resolvent} satisfies
\[
\frac{uI(u)}{u-1}
=
1-\frac{A(u)}{A(u\Mt^{-1})}.
\]
\end{theorem}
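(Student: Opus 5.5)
Since this is Nazarov--Sklyanin's theorem, we only need a self-contained route to the identity $uI(u)/(u-1)=1-A(u)/A(u\Mt^{-1})$. The plan is to expand the resolvent as a Neumann series in $u^{-1}$ and evaluate each term as a constant-term (contour) integral of a normally ordered vertex operator. Write
\[
(u\D_\Mq-\Lc^{NS})^{-1}=\sum_{k\ge0}u^{-k-1}(\D_\Mq^{-1}\Lc^{NS})^k\D_\Mq^{-1},
\]
so that $I(u)=\sum_{k\ge0}u^{-k-1}I_k$ with $I_k=\Q(\D_\Mq^{-1}\Lc^{NS})^k\D_\Mq^{-1}\Q^*$. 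Each $\Lc^{NS}=Q^*(z^{-1})\D_\Mt\pi_-Q(z)$ contributes one annihilation factor $Q^*$ and one creation factor $Q$. The projections $\pi_-$ and the scalings $\D_\Mt,\D_\Mq^{-1}$ become, after introducing a fresh variable $z_i$ for each occurrence, geometric-series kernels $\sum_{m\ge1}(\Mt/\Mq)^m(z_{i+1}/z_i)^m$-type factors. Hence $I_k$ is the constant term in $z_0,\dots,z_k$ of a product of $Q(z_i)$'s, $Q^*(z_j^{-1})$'s and rational kernels.

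Next, I would normally order this product, moving all $Q^*$ to the right of all $Q$. The commutation relation
\[
Q^*(w)Q(z)=\frac{(1-\Mt zw)(1-\Mq zw)}{(1-zw)(1-\Mq\Mt zw)}\,Q(z)Q^*(w),
\]
which follows from the $\Mq,\Mt$-scalar product, turns $I_k$ into an iterated contour integral
\[
I_k=\oint\prod_{i<j}(\text{rational kernel in } z_i/z_j)\;{:}\prod_i Q(z_i)Q^*(z_i^{-1}){:}\;\prod_i\frac{dz_i}{z_i}.
\]
Because $\Lc^{NS}$ satisfies a Lax equation with the Macdonald Hamiltonian, $I(u)$ commutes with the Macdonald operator. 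Its eigenvalues are simple in $\QQ(\Mq,\Mt)$, so $I(u)$ is diagonal on the $P_\lambda$, and it suffices to compute eigenvalues.

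To compute these eigenvalues I would restrict to $n$ variables, with $n\ge\ell(\lambda)$, and take the stable limit. There $Q(z)=\prod_i(1-\Mt x_iz)/(1-x_iz)$, while ${:}Q(z)Q^*(z^{-1}){:}$ acts by the $\Mq$-shift $x_i\mapsto \Mq x_i$ weighted by residues at $z=x_i^{-1}$. Taking residues of the integral for $I_k$ then produces exactly the Macdonald difference operators $D_n^{(r)}$ (shifting $r$ distinct variables), with coefficients $\prod \frac{\Mt x_i-x_j}{x_i-x_j}$. Their eigenvalues on $P_\lambda$ are the elementary symmetric functions $e_r(\Mq^{\lambda_1}\Mt^{n-1},\dots,\Mq^{\lambda_n})$. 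Resumming over $k$ and $r$ should give a telescoping ratio of the product $\prod_i(\Mq^{-\lambda_i}-u\Mt^{1-i})/(1-u\Mt^{1-i})$ at $u$ and at $u\Mt^{-1}$, i.e.\ $1-A_\lambda(u)/A_\lambda(u\Mt^{-1})$. The stray factor $u/(u-1)$ comes from the $k=0$ term and the $\lambda=\varnothing$ normalization. As a consistency check, $I(u)\cdot1$ equals $\Q\D_\Mq^{-1}u^{-1}\Q^*1=0$ to leading order, matching $A_\varnothing\equiv1$.

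The main obstacle is the middle step. The residue bookkeeping in the iterated integral is delicate: the ordering of the contours (dictated by $\pi_-$ and the $\D$ scalings) decides which poles $z_i=x_a^{-1}$ versus $z_i=\Mq^{\pm1}z_j$ contribute. One then has to show that the resulting sum over sequences of shifted variables with repetitions collapses to the difference operators with distinct variables. I would first verify the whole scheme for $n=1$ and for $k\le2$ by direct residue calculation, to pin down the contour ordering. I would then organise the general case as a recursion in $u$, using $A(u)/A(u\Mt^{-1})=\prod_i$ of one-variable factors, so that the combinatorics reduce to a one-step identity rather than a full resummation.
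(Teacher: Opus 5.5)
The paper does not prove this statement. It imports it from Nazarov--Sklyanin \cite[Theorem~1]{Nazarov:2015ab} and uses it as a black box in Proposition~\ref{cor:transfer-hierarchy} and the spectral theorem. Your outline therefore has to stand on its own, and as written it has a genuine gap: the step that carries the whole content of the theorem is not done. You never evaluate the iterated constant terms defining $I_k$. You never show that the sum over index sequences with repetitions collapses to difference operators in distinct variables. You never carry out the resummation over $k$ that is supposed to turn a Neumann series into the ratio $A(u)/A(u\Mt^{-1})$ with the prefactor $(u-1)/u$. You flag this yourself as the main obstacle and only assert that it ``should'' work. The suggested fallback, a one-step recursion built on the product $A(u)/A(u\Mt^{-1})=\prod_i(\cdots)$, is not justified either. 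The eigenvalue of $I(u)$ is $\frac{u-1}{u}\bigl(1-\prod_i(\cdots)\bigr)$, which is not a product, and nothing in the construction of $I(u)$ respects that factorization.

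Several intermediate claims are also wrong as stated, so the bookkeeping is not yet set up correctly.

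First, $Q^*(w)$ is the plethystic translation $X\mapsto X+(1-\Mq)w$. At $z=x_i^{-1}$ the factor $Q^*(z^{-1})$ therefore gives $X+x_i-\Mq x_i$, not the shift $x_i\mapsto\Mq x_i$. The shift that actually occurs is $x_i\mapsto\Mq^{-1}x_i$, produced by $Q^*(\Mq^{-1}z^{-1})$. That factor arises only after commuting the interleaved $\D_\Mq^{-1}$ through, via $Q^*(\Mq^{-1}z^{-1})\D_\Mq^{-1}=\D_\Mq^{-1}Q^*(z^{-1})$. This is why $A_\lambda(u)$ contains $\Mq^{-\lambda_i}$, and why eigenvalues $e_r(\Mq^{\lambda_1}\Mt^{n-1},\dots,\Mq^{\lambda_n})$ are not the right input. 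Already in degree one, $I(u)J_{(1)}=\frac{(1-\Mq)(1-\Mt)}{\Mq u-\Mt}J_{(1)}$, and the $\Mq$ comes from $\D_\Mq$.

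Second, you cannot restrict the individual factors of $I_k$ to $n$ variables, because $Q^*(w)$ does not preserve $\ker(\La\to\La_n)$. For $n=1$, $Q^*(w)e_2=e_2+(1-\Mq)w\,e_1-\Mq(1-\Mq)w^2$, which restricts to $(1-\Mq)wx_1-\Mq(1-\Mq)w^2\neq0$ even though $e_2$ restricts to $0$. Only fully assembled constant terms descend to $\La_n$. They carry $n$-dependent normalizations (powers of $\Mt^{n}$) that must be renormalized before a stable limit exists. Controlling this is exactly the content of the Macdonald operators at infinity of \cite{Nazarov:2014}, which your outline does not supply.

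Third, the Lax-equation step is asserted, not proved. Even granting a Lax equation on $\La^\infty$, you would still need intertwining relations for $\Q$, $\Q^*$ and $\D_\Mq$ to conclude $[H,I(u)]=0$. The step would in any case be redundant if the residue computation identified $I(u)$ with a function of Macdonald operators.
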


The eigenvalues $A_\lambda(u)$ have a product form over the boxes of
$\lambda$, in terms of the single-box factor
\begin{equation}\label{eq:Nfactor}
\mathcal N(u)
:=
\frac{(1-[1,1]/u)(1-[0,0]/u)}{(1-[0,1]/u)(1-[1,0]/u)}
.
\end{equation}

\begin{lemma}\label{lem:A-corner-factor}
\[
\frac{A_\lambda(u)}{A_\lambda(u\Mt^{-1})}
=
\prod_{b\in\lambda}\mathcal N(u/[b])^{-1}.
\]
Equivalently,
\[
\frac{A_\lambda(u)}{A_\lambda(u\Mt^{-1})}
=
\frac{\,\,\,\,\,\,\prod\limits_{a\in\A_\lambda}\,\,\,\,\,\,(1-[a]/u)}
{\prod\limits_{b\in\R_\lambda^+\cup \{(0,0)\}}(1-[b]/u)}.
\]
\end{lemma}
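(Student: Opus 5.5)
The plan is to compute the ratio directly from the product formula \eqref{eq:Aeigen}, one row at a time, and then telescope the row factors in two different ways: once along each row to get the box-product form, and once down the columns to get the corner form.

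\emph{Step 1: the row factor.} Index rows from zero, $r=i-1$. The $i$-th factor of $A_\lambda(u)/A_\lambda(u\Mt^{-1})$ is
\[
\frac{(\Mq^{-\lambda_i}-u\Mt^{-r})(1-u\Mt^{-r-1})}{(1-u\Mt^{-r})(\Mq^{-\lambda_i}-u\Mt^{-r-1})}.
\]
Pull out $-u\Mt^{-r}$ or $-u\Mt^{-r-1}$ from each linear factor, so that it becomes $1-[b]/u$ for an appropriate box $b$. For example,
\[
\Mq^{-\lambda_i}-u\Mt^{-r}=-u\Mt^{-r}\bigl(1-[r,\lambda_i]/u\bigr),
\qquad
1-u\Mt^{-r}=-u\Mt^{-r}\bigl(1-[r,0]/u\bigr).
\]
The monomial prefactors in the numerator and denominator are the same, so they cancel. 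Writing $g_r(j)=(1-[r,j]/u)/(1-[r+1,j]/u)$, the row factor is $F_r=g_r(\lambda_{r+1})/g_r(0)$. In particular $F_r=1$ when $\lambda_{r+1}=0$, so the infinite product is really finite.

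\emph{Step 2: the box form.} Telescope along row $r$:
\[
F_r=\prod_{j=0}^{\lambda_{r+1}-1}\frac{g_r(j+1)}{g_r(j)}.
\]
Expanding, the factor for $j$ is
\[
\frac{(1-[r,j+1]/u)(1-[r+1,j]/u)}{(1-[r,j]/u)(1-[r+1,j+1]/u)}.
\]
Set $b=(r,j)$. Since $[b+c]=[b][c]$, we have $[r+\epsilon,j+\delta]/u=[\epsilon,\delta]/(u/[b])$ for $\epsilon,\delta\in\{0,1\}$. So this factor is exactly $\mathcal N(u/[b])^{-1}$ from \eqref{eq:Nfactor}. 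Taking the product over all rows gives the first formula.

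\emph{Step 3: the corner form.} Use instead the unexpanded expression
\[
F_r=\frac{(1-[r,\lambda_{r+1}]/u)(1-[r+1,0]/u)}{(1-[r,0]/u)(1-[r+1,\lambda_{r+1}]/u)},
\qquad 0\le r<\ell(\lambda).
\]
The zero-column factors telescope in $r$ to
\[
\frac{1-[\ell(\lambda),0]/u}{1-[0,0]/u}.
\]
Now group the remaining factors into blocks of equal parts $L_k$, covering rows $I_{k-1},\dots,I_k-1$. Inside each block they telescope to
\[
\frac{1-[I_{k-1},L_k]/u}{1-[I_k,L_k]/u}.
\]
The numerators $(I_{k-1},L_k)$ for $1\le k\le m$, together with $(\ell(\lambda),0)=(I_m,L_{m+1})$, are exactly the addable corners $\A_\lambda$ in the corner-coordinate description of Section~\ref{ssec:notation}. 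The denominators $(I_k,L_k)$ are exactly $\R^+_\lambda$, and the remaining denominator is $(0,0)$. This gives the second formula.

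Alternatively, one could derive the corner form from the box form by noting that in $\prod_b\mathcal N(u/[b])^{-1}$ each content appears with net multiplicity given by an inclusion--exclusion over the four boxes $b,b+(0,1),b+(1,0),b+(1,1)$. This net multiplicity is nonzero only at corners.

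The only real obstacle is bookkeeping: fixing the sign and power conventions when rewriting the linear factors, so that the prefactors visibly cancel in Step 1, and matching the block endpoints in Step 3 to the corner coordinates $(I_k,L_{k+1})$ and $(I_k,L_k)$. Neither step needs any input beyond \eqref{eq:Aeigen}.
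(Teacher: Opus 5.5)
Your proposal is correct and follows the paper's proof: both match the $i$th factor of $A_\lambda(u)/A_\lambda(u\Mt^{-1})$ with the row-telescoped product of $\mathcal N(u/[b])^{-1}$ over row $i-1$, and then get the corner form by cancelling shared factors. Your Step~3 spells out the zero-column and block-by-block telescoping, matched to the corner coordinates $(I_k,L_{k+1})$ and $(I_k,L_k)$, which the paper compresses into a single sentence.
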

\begin{proof}
For the boxes in row $i-1$,
\begin{align*}
\prod_{j=0}^{\lambda_i-1}\mathcal N(u/[i-1,j])^{-1}
&=
\frac{(1-\Mt^{i-1}\Mq^{-\lambda_i}/u)(1-\Mt^i/u)}
     {(1-\Mt^{i-1}/u)(1-\Mt^i\Mq^{-\lambda_i}/u)}\\
&=
\frac{(\Mq^{-\lambda_i}-u\Mt^{1-i})(1-u\Mt^{-i})}
     {(1-u\Mt^{1-i})(\Mq^{-\lambda_i}-u\Mt^{-i})}.
\end{align*}
This is the $i$th factor of $A_\lambda(u)/A_\lambda(u\Mt^{-1})$, and the
product over $i$ gives the first identity.  In the product over boxes the
factors shared by adjacent boxes cancel, which leaves the second form.
\end{proof}

\section{The full Lax operator}\label{sec:reformulation}

We now reformulate the results of the previous section in the framework of
Mickler--Moll \cite{MicklerMoll:2023} and Mickler \cite{Mickler:2023}.

\subsection{Definition and block form}

We define an inner product on the full space $\Lambda[z^{-1}]$ by
\begin{equation}\label{def:innerproductinf}
\langle f, g \rangle_\infty = \mathrm{CT}_z\langle f(\Mq^{-1} z),g(z^{-1})\rangle_{\Mq,\Mt},
\end{equation}
where $\mathrm{CT}_z$ takes the constant term in $z$. Equivalently,
$\langle z^{-m} p_\mu, z^{-n} p_\nu \rangle_\infty = \Mq^m \delta_{mn}\langle p_\mu,p_\nu\rangle_{\Mq,\Mt}$. Consider the following operator on $\Lambda[z^{-1}]$,
\[ (\mathbf{Q}f)(z) = \pi_{\le 0}Q(z)f(z)=\pi_{\le 0}\left(\exp\left(\sum_{n\geq1}\frac{1-\Mt^n}{n}p_nz^n\right)f(z)\right),\]
with formal adjoint with respect to \eqref{def:innerproductinf} given by
\[ \mathbf{Q}^\dagger= Q^*(\Mq^{-1}z^{-1}) =\exp\left(-\sum_{n\geq1}\frac{1-\Mq^{-n}}{n}p_n^\perp z^{-n}\right)\in \operatorname{End}(\Lambda[z^{-1}]).\]

\begin{definition}[Full Macdonald Lax operator; compare {\cite[Definition~1.1]{MicklerMoll:2023}}]\label{def:Laxdef}
The Macdonald Lax operator is
\begin{equation}\label{eq:Laxdef}
\LL:=\,\mathbf{Q}^\dagger\,\D_{\Mq^{-1}\Mt}\,\mathbf{Q}\quad \in \operatorname{End}(\Lambda[z^{-1}]).
\end{equation}
\end{definition}
The operator $\LL$ is formally self-adjoint with respect to
\eqref{def:innerproductinf} and preserves the total degree, with
$\deg z^{-1}=1$.
This operator does not appear in \cite{Nazarov:2015ab}, but its relation to their
scalar resolvent \eqref{def:resolvent} is explained in
Section~\ref{ssec:transfer}.
We relate $\LL$ to the following \emph{modified Nazarov--Sklyanin Lax
operator} on $\La^\infty$.
\begin{definition}
\[
\LL^+:=\D_\Mq^{-1}\Lc^{NS} \in \operatorname{End}(\La^\infty).
\]
\end{definition}

\begin{lemma}[Shift property; compare {\cite[Corollary~2.5]{Mickler:2023}}]\label{lem:shift}
As operators on $\La[z^{-1}]$,
\[
z\,\LL^+\,z^{-1}=\Mq^{-1}\Mt\,\LL .
\]
\end{lemma}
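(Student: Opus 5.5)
The identity is a direct computation: conjugate each factor of $\LL^+=\D_\Mq^{-1}Q^*(z^{-1})\D_\Mt\pi_-Q(z)$ by multiplication by $z$, then reassemble. First note that $z^{-1}$ maps $\La[z^{-1}]$ bijectively onto $\La^\infty$, with inverse $z$ on $\La^\infty$. Hence $z\,\LL^+z^{-1}$ is a well-defined operator on $\La[z^{-1}]$. We can write
\[
z\,\LL^+z^{-1}=(z\D_\Mq^{-1}z^{-1})(zQ^*(z^{-1})z^{-1})(z\D_\Mt z^{-1})(z\pi_-Q(z)z^{-1}).
\]
Strictly speaking, each factor should be read as acting on $\La^\infty$ or on $\La[z^{-1}]$ as appropriate. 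Here we use that $Q^*(z^{-1})$ only lowers the power of $z$, so it preserves $\La^\infty$.

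The four conjugations go as follows.
\begin{enumerate}
\item For $f\in\La[z^{-1}]$, $z\,\pi_-(Q(z)z^{-1}f)$ keeps exactly the terms of $Q(z)f$ with nonpositive power of $z$. So $z\pi_-Q(z)z^{-1}=\pi_{\le0}Q(z)=\mathbf{Q}$.
\item On $z^{-m}f$, the operator $z\D_\Mt z^{-1}$ acts by $\Mt^{m+1}$. So $z\D_\Mt z^{-1}=\Mt\,\D_\Mt$, and likewise $z\D_\Mq^{-1}z^{-1}=\Mq^{-1}\D_\Mq^{-1}$.
\item $Q^*(z^{-1})$ commutes with multiplication by $z$, since it only involves $p_n^\perp$ and powers of $z$.
\end{enumerate}
This gives
\[
z\,\LL^+z^{-1}=\Mq^{-1}\Mt\;\D_\Mq^{-1}Q^*(z^{-1})\D_\Mt\,\mathbf{Q}.
\]

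It remains to show $\D_\Mq^{-1}Q^*(z^{-1})\D_\Mt=\mathbf{Q}^\dagger\D_{\Mq^{-1}\Mt}$. To do this, write $\D_\Mt=\D_\Mq\D_{\Mq^{-1}\Mt}$ and compute the conjugation $\D_\Mq^{-1}z^{-n}\D_\Mq$. Applied to $z^{-m}f$, this gives $\Mq^{m}\Mq^{-(m+n)}z^{-m-n}f$, so $\D_\Mq^{-1}z^{-n}\D_\Mq=\Mq^{-n}z^{-n}$. Since $\D_\Mq$ commutes with $p_n^\perp$, it follows that $\D_\Mq^{-1}Q^*(z^{-1})\D_\Mq=Q^*(\Mq^{-1}z^{-1})$. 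Finally, $(1-\Mq^n)\Mq^{-n}=-(1-\Mq^{-n})$, so the exponential form of $Q^*(\Mq^{-1}z^{-1})$ agrees with the stated formula for $\mathbf{Q}^\dagger$. Putting these together yields $z\,\LL^+z^{-1}=\Mq^{-1}\Mt\,\mathbf{Q}^\dagger\D_{\Mq^{-1}\Mt}\mathbf{Q}=\Mq^{-1}\Mt\,\LL$.

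The only point requiring care is the domain bookkeeping in the first step. The projection $\pi_-$ applied to $Q(z)z^{-1}f$, followed by multiplication by $z$, must land exactly on $\pi_{\le0}(Q(z)f)$ and not drop or add a $z^0$ term. This holds because multiplying by $z^{-1}$ shifts every exponent by exactly one, so the $z^0$ part of $Q(z)f$ is precisely the $z^{-1}$ part of $Q(z)z^{-1}f$, which $\pi_-$ retains. Everything else is formal manipulation of the diagonal operators $\D_x$.
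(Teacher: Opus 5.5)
Your proof is correct and takes essentially the same route as the paper. Both conjugate the diagonal operators and the projection by $z$, using $z\D_\Mt z^{-1}=\Mt\D_\Mt$, $z\D_\Mq^{-1}z^{-1}=\Mq^{-1}\D_\Mq^{-1}$ and $z\pi_-z^{-1}=\pi_{\le0}$ with $Q(z)$ and $Q^*(z^{-1})$ commuting with $z$, and then absorb $\D_\Mq^{-1}$ through $\D_\Mq^{-1}Q^*(z^{-1})\D_\Mq=Q^*(\Mq^{-1}z^{-1})=\mathbf{Q}^\dagger$; your explicit exponent check $(1-\Mq^n)\Mq^{-n}=-(1-\Mq^{-n})$ and the domain bookkeeping are details the paper leaves implicit.
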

\begin{proof}
We start from \(\LL^+=\D_\Mq^{-1}\Lc^{NS}=\D_\Mq^{-1}Q^*(z^{-1})\D_\Mt\pi_-Q(z)\). The series
\(Q(z)\) and \(Q^*(z^{-1})\) commute with multiplication by \(z\), so conjugation
by \(z(\cdot)z^{-1}\) only acts on the diagonal operators and the projection:
\[
z\,\D_\Mq^{-1}\,z^{-1}=\Mq^{-1}\D_\Mq^{-1},\qquad
z\,\D_\Mt\,z^{-1}=\Mt\,\D_\Mt,\qquad
z\,\pi_-\,z^{-1}=\pi_{\le 0}.
\]
Therefore
\[
z\,\LL^+\,z^{-1}
=
\Mq^{-1}\D_\Mq^{-1}\,Q^*(z^{-1})\,\Mt\,\D_\Mt\,\pi_{\le 0}\,Q(z)
=
\Mq^{-1}\Mt\,\D_\Mq^{-1}\,Q^*(z^{-1})\,\D_\Mt\,\pi_{\le 0}\,Q(z)
=
\Mq^{-1}\Mt\,\LL,
\]
where the last step uses $\D_{\Mq^{-1}\Mt}=\D_\Mq^{-1}\D_\Mt$ and
$
Q^*(\Mq^{-1}z^{-1})\D_\Mq^{-1}
=
\D_\Mq^{-1}Q^*(z^{-1}).
$
\end{proof}

\begin{lemma}[Block form of $\LL$]\label{lem:blockform}
With respect to \(\La[z^{-1}]=\La\oplus\La^\infty\), we have
\[
\LL
=
\begin{pmatrix}
1 & \Q\\
\D_\Mq^{-1}\Q^* & \D_\Mq^{-1}(\Q^*\Q+\Lc^{NS})
\end{pmatrix}
\]
\end{lemma}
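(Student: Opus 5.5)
The plan is to compute $\LL=\mathbf{Q}^\dagger\D_{\Mq^{-1}\Mt}\mathbf{Q}$ factor by factor. I would write each of the three factors as a $2\times2$ block matrix with respect to $\La[z^{-1}]=\La\oplus\La^\infty$ and then multiply the matrices. Every step uses only how each operator moves powers of $z$.

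\emph{Step 1: the operator $\mathbf{Q}$.} For $f_0\in\La$, the series $Q(z)f_0$ involves only nonnegative powers of $z$, and its $z^0$ coefficient is $f_0$ because $Q_0=1$. Hence $\pi_{\le0}Q(z)f_0=f_0$. For $f_-\in\La^\infty$, split $\pi_{\le0}=\pi_0+\pi_-$. This gives $\mathbf{Q}f_-=\Q f_-+\pi_-Q(z)f_-$, with the first term in $\La$ and the second in $\La^\infty$. So
\[
\mathbf{Q}=\begin{pmatrix}1&\Q\\0&\pi_-Q(z)\end{pmatrix}.
\]

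\emph{Step 2: the middle factor.} The operator $\D_{\Mq^{-1}\Mt}$ is diagonal, so its block matrix is $\operatorname{diag}(1,\D_\Mq^{-1}\D_\Mt)$. It acts as the identity on $z^0$ and as $\D_\Mq^{-1}\D_\Mt$ on $\La^\infty$.

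\emph{Step 3: the operator $\mathbf{Q}^\dagger$.} I will use the conjugation identity from the proof of Lemma~\ref{lem:shift}, which reads $Q^*(\Mq^{-1}z^{-1})=\D_\Mq^{-1}Q^*(z^{-1})\D_\Mq$. Since $Q^*(z^{-1})$ only lowers powers of $z$, $\mathbf{Q}^\dagger$ maps $\La^\infty$ into $\La^\infty$ and has no $\La^\infty\to\La$ block.
\begin{itemize}
\item On $g\in\La$, the operator $\D_\Mq$ acts trivially, so $\mathbf{Q}^\dagger g=g+\D_\Mq^{-1}(Q^*(z^{-1})-1)g=g+\D_\Mq^{-1}\Q^*g$.
\item On $\La^\infty$, $\mathbf{Q}^\dagger$ is $\D_\Mq^{-1}Q^*(z^{-1})\D_\Mq$.
\end{itemize}
Therefore
\[
\mathbf{Q}^\dagger=\begin{pmatrix}1&0\\ \D_\Mq^{-1}\Q^* & \D_\Mq^{-1}Q^*(z^{-1})\D_\Mq\end{pmatrix}.
\]

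\emph{Step 4: multiplying out.} The first column of the product is $(1,\D_\Mq^{-1}\Q^*)^T$. In the second column, the top entry is $\Q$. The bottom entry is
\[
\D_\Mq^{-1}\Q^*\Q+\D_\Mq^{-1}Q^*(z^{-1})\D_\Mq\D_\Mq^{-1}\D_\Mt\pi_-Q(z).
\]
The factors $\D_\Mq\D_\Mq^{-1}$ cancel, and what remains of the second term is $\D_\Mq^{-1}\Lc^{NS}$ by Definition~\ref{def:nsorigoperator}. This gives the claimed block form.

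The only point needing care is Step 3. There one must check that the $\Mq$-twist in $\mathbf{Q}^\dagger$ really factors as conjugation by $\D_\Mq$, so that it combines with the $\D_\Mq^{-1}$ coming from $\D_{\Mq^{-1}\Mt}$. The check is done on a monomial: $Q^*(\Mq^{-1}z^{-1})$ sends $z^{-m}$ to $\Mq^{-k}z^{-m-k}$, and this is exactly the effect of $\D_\Mq^{-1}Q^*(z^{-1})\D_\Mq$ on that monomial.
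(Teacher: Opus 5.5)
Your proof is correct. Each factor's block form is right, including the conjugation $Q^*(\Mq^{-1}z^{-1})=\D_\Mq^{-1}Q^*(z^{-1})\D_\Mq$, which is the identity recorded in the proof of Lemma~\ref{lem:shift}. The product of the three blocks gives exactly the stated matrix.

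Your route is organized differently from the paper's. The paper does not block-decompose the three factors. It starts from Lemma~\ref{lem:shift} in the form $\LL=\Mq\Mt^{-1}\,z\,\D_\Mq^{-1}\Lc^{NS}z^{-1}$ and computes $\pi_-Q(z)z^{-1}(f+\chi)=z^{-1}(f+\Q\chi+\pi_-Q(z)\chi)$. It then moves $z$ back through $\D_\Mt$ and $\D_\Mq^{-1}$ to obtain $\LL(f+\chi)=\D_\Mq^{-1}Q^*(z^{-1})\D_\Mt(f+\Q\chi+\pi_-Q(z)\chi)$, and reads off the blocks using $f+\Q\chi\in\La$. That intermediate expression is just $\mathbf{Q}^\dagger\D_{\Mq^{-1}\Mt}\mathbf{Q}(f+\chi)$ rewritten, so the two computations meet in the middle.

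Your version needs only Definition~\ref{def:Laxdef}. It also exhibits $\LL$ as a product of a block lower-triangular, a diagonal and a block upper-triangular factor. This makes visible why the $\La$-block is $1$ and why $\Q^*\Q$ arises as the product of the off-diagonal entries. Since $\La\perp\La^\infty$ for $\langle\cdot,\cdot\rangle_\infty$ and $\mathbf{Q}^\dagger$ is the adjoint of $\mathbf{Q}$, it also shows why $\D_\Mq^{-1}\Q^*$ is the adjoint of $\Q$, a fact used in Proposition~\ref{prop:spectral}.

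The paper's route instead ties the block form to Lemma~\ref{lem:shift}. There the extra $\La$ row and column are seen to come from $z\pi_-z^{-1}=\pi_{\le0}$. The same relation between $\LL$ and $\LL^+$ is what Corollary~\ref{cor:NSspectral} later uses.
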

\begin{proof}
By the previous lemma,
\[
\LL=(\Mq\Mt^{-1})z\D_\Mq^{-1}\Lc^{NS}z^{-1}.
\]
For \(f\in\La\) and \(\chi\in\La^\infty\),
\[
\pi_-Q(z)z^{-1}(f+\chi)
=
z^{-1}\left(f+\Q\chi+\pi_-Q(z)\chi\right).
\]
Using \(\D_\Mt z^{-1}=\Mt z^{-1}\D_\Mt\) and
\[
\Mq z\D_\Mq^{-1}z^{-1}=\D_\Mq^{-1},
\]
we get
\[
\LL(f+\chi)
=
\D_\Mq^{-1}Q^*(z^{-1})\D_\Mt
\left(f+\Q\chi+\pi_-Q(z)\chi\right).
\]
Since \(f+\Q\chi\in\La\), this equals
\[
\begin{pmatrix}
f+\Q\chi\\
\D_\Mq^{-1}\Q^*(f+\Q\chi)
+\D_\Mq^{-1}\Lc^{NS}\chi
\end{pmatrix}.
\]
\end{proof}

Although $\LL$ is not multiplicative, it is multiplicative when one factor
lies in $\La$.

\begin{lemma}\label{lem:pi0homo}
For $\eta\in\La=\pi_0\La[z^{-1}]$ and $b\in\La[z^{-1}]$,
\[
\LL(\eta b)=\LL(\eta)\LL(b).
\]
\end{lemma}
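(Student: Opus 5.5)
The plan is to unpack $\LL=\mathbf{Q}^\dagger\D_{\Mq^{-1}\Mt}\mathbf{Q}$ and check how each of the three factors interacts with multiplication by $\eta\in\La$. The two ingredients are that multiplication by a $z$-independent element commutes with $\mathbf{Q}$ and $\D_{\Mq^{-1}\Mt}$, and that $\mathbf{Q}^\dagger$ is multiplicative.

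First I compute $\LL(\eta)$. Since $Q(z)=1+\sum_{n\ge1}Q_nz^n$ has only nonnegative powers of $z$ and constant term $1$, we get $\mathbf{Q}\eta=\pi_{\le0}(Q(z)\eta)=\eta$. The operator $\D_{\Mq^{-1}\Mt}$ acts as the identity on $z^0$. Hence $\LL(\eta)=\mathbf{Q}^\dagger\eta$.

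Next I pull $\eta$ through the two rightmost factors. Multiplication by $\eta$ commutes with multiplication by the series $Q(z)$. It also commutes with $\pi_{\le0}$ and with $\D_{\Mq^{-1}\Mt}$, because $\eta$ carries no power of $z$, so it does not change the $z$-degree of any term. Therefore
\[
\D_{\Mq^{-1}\Mt}\mathbf{Q}(\eta b)=\eta\,\D_{\Mq^{-1}\Mt}\mathbf{Q}(b).
\]

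Then I show that $\mathbf{Q}^\dagger$ is an algebra homomorphism of $\La[z^{-1}]$; this is the main point. Write
\[
\mathbf{Q}^\dagger=\exp(\delta),\qquad \delta:=-\sum_{n\ge1}\frac{1-\Mq^{-n}}{n}\,z^{-n}p_n^\perp .
\]
Each $p_n^\perp=n\,\partial/\partial p_n$ is a derivation of $\La$, and it extends $z$-linearly to a derivation of $\La[z^{-1}]$. Multiplication by $z^{-n}$ commutes with everything in sight. So $\delta$ is a derivation, and it is locally nilpotent: it strictly lowers the $p$-degree, so on any given element only finitely many terms of the exponential are nonzero. The exponential of a locally nilpotent derivation is multiplicative by the Leibniz rule $\delta^k(fg)=\sum_j\binom{k}{j}\delta^j f\,\delta^{k-j}g$. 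Alternatively, in the $p$-variables $\mathbf{Q}^\dagger$ is the substitution $p_n\mapsto p_n-(1-\Mq^{-n})z^{-n}$, which is visibly a ring map.

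Finally I combine the steps:
\[
\LL(\eta b)=\mathbf{Q}^\dagger\bigl(\eta\,\D_{\Mq^{-1}\Mt}\mathbf{Q}b\bigr)=\mathbf{Q}^\dagger(\eta)\,\mathbf{Q}^\dagger\D_{\Mq^{-1}\Mt}\mathbf{Q}b=\LL(\eta)\LL(b).
\]
The only step needing care is the multiplicativity of $\mathbf{Q}^\dagger$, and the substitution description settles it directly.
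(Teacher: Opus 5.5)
Your proof is correct and follows the same route as the paper. You pull $\eta$ through $\mathbf Q$ and $\D_{\Mq^{-1}\Mt}$, observe that $\LL(\eta)=\mathbf Q^\dagger\eta$, and use that $\mathbf Q^\dagger$ is the exponential of a (locally nilpotent) derivation, equivalently the translation $p_n\mapsto p_n+(\Mq^{-n}-1)z^{-n}$, which the paper itself records in the proof of Lemma~\ref{lem:beta-structure}, and is therefore multiplicative.
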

\begin{proof}
Since $\eta$ has no $z$-dependence, $\mathbf Q(\eta b)=\eta\mathbf Qb$ and
$\D_{\Mq^{-1}\Mt}\eta=\eta$.  The operator $\mathbf Q^\dagger$ is an exponential
of commuting derivations and hence preserves multiplication.  Therefore
\[
\LL(\eta b)
=\mathbf Q^\dagger\bigl(\eta\D_{\Mq^{-1}\Mt}\mathbf Qb\bigr)
=(\mathbf Q^\dagger\eta)\LL(b)
=\LL(\eta)\LL(b).
\]
\end{proof}

\subsection{Transfer operators and Kerov measures}\label{ssec:transfer}

We now make the connection to the integrable hierarchy, via the transfer function, which was one of the motivations of Nazarov--Sklyanin.

\begin{definition}[compare {\cite[(2.2)]{MicklerMoll:2023}}]
The \emph{transfer operator} is
\[
\mathcal{T}(u) := \pi_0 (1-u^{-1}\LL)^{-1}\big|_{\La}
=\sum_{k=0}^{\infty} u^{-k} \mathcal{T}_k ,
\qquad \mathcal T_k\in\operatorname{End}(\La).
\]
\end{definition}

\begin{proposition}[compare {\cite[(2.3)]{MicklerMoll:2023}}]\label{cor:transfer-hierarchy}
For every partition $\lambda$,
\[
\mathcal{T}(u) J_\lambda = T_\lambda(u)J_\lambda,
\qquad
T_\lambda(u):=1+\frac{S_\lambda(u)}{u-1},
\qquad
S_\lambda(u) := \frac{A_\lambda(u\Mt^{-1})}{A_\lambda(u)} = \prod_{b\in\lambda}\mathcal N(u/[b]),
\]
and
\begin{equation}\label{eq:Tcorner}
T_\lambda(u)=1+u^{-1}\,
\frac{\prod_{b\in\R_\lambda^+}(1-[b]/u)}{\prod_{a\in\A_\lambda}(1-[a]/u)} .
\end{equation}
In particular the operators $\mathcal T_k$ commute and are diagonal on the
Macdonald polynomials.
\end{proposition}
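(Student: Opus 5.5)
The plan is to compute the $(\La,\La)$ entry of $(u-\LL)^{-1}$ directly from the block form of Lemma~\ref{lem:blockform}, using a Schur complement, and then to reduce it to the Nazarov--Sklyanin scalar resolvent $I(u)$ of \eqref{def:resolvent}. At that point Theorem~\ref{thm:nsmain} and Lemma~\ref{lem:A-corner-factor} finish the computation. We have $\mathcal T(u)=u\,\pi_0(u-\LL)^{-1}|_{\La}$. All inverses are taken as formal power series in $u^{-1}$. Because $\LL$, $\Q$, $\Q^*$ and $\Lc^{NS}$ preserve total degree and each graded piece is finite-dimensional, these expansions are legitimate and the algebraic manipulations (Schur complement, geometric series) hold degree by degree.

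\emph{Step 1 (Schur complement).} By Lemma~\ref{lem:blockform},
\[
u-\LL=\begin{pmatrix} u-1 & -\Q\\ -\D_\Mq^{-1}\Q^* & u-\D_\Mq^{-1}(\Q^*\Q+\Lc^{NS})\end{pmatrix}.
\]
The top-left block of its inverse is therefore
\[
\bigl(u-1-\Q\,(u-\D_\Mq^{-1}\Q^*\Q-\D_\Mq^{-1}\Lc^{NS})^{-1}\D_\Mq^{-1}\Q^*\bigr)^{-1}.
\]
Pulling $\D_\Mq^{-1}$ out of the middle inverse gives
\[
(u-\D_\Mq^{-1}\Q^*\Q-\D_\Mq^{-1}\Lc^{NS})^{-1}\D_\Mq^{-1}=(M-\Q^*\Q)^{-1},
\qquad M:=u\D_\Mq-\Lc^{NS}.
\]
Expanding $(M-\Q^*\Q)^{-1}=\sum_{k\ge0}(M^{-1}\Q^*\Q)^kM^{-1}$ then yields
\[
\Q(M-\Q^*\Q)^{-1}\Q^*=\sum_{k\ge1}X^k=X(1-X)^{-1},
\qquad X:=\Q M^{-1}\Q^*=I(u).
\]
Hence $\mathcal T(u)=u\bigl(u-1-X(1-X)^{-1}\bigr)^{-1}$ as an operator on $\La$.

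\emph{Step 2 (insert Nazarov--Sklyanin).} By Theorem~\ref{thm:nsmain}, $X=\frac{u-1}{u}(1-S^{-1})$, where $S:=A(u\Mt^{-1})A(u)^{-1}$. Everything here is a function of the commuting operators $A(\cdot)$, which act diagonally on $P_\lambda$, and hence on $J_\lambda$, by \eqref{eq:Aeigen}. So we may replace $S$ by its eigenvalue $S_\lambda(u)$ and compute with scalars. A short calculation gives
\[
1-X=\frac{S+u-1}{uS},\qquad
X(1-X)^{-1}=\frac{(u-1)(S-1)}{S+u-1},
\]
and therefore
\[
u-1-\frac{(u-1)(S-1)}{S+u-1}=\frac{u(u-1)}{S+u-1}.
\]
It follows that $\mathcal T(u)J_\lambda=\frac{S_\lambda+u-1}{u-1}J_\lambda=\bigl(1+\frac{S_\lambda(u)}{u-1}\bigr)J_\lambda$. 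This is the first claim, and Lemma~\ref{lem:A-corner-factor} identifies $S_\lambda(u)=\prod_{b\in\lambda}\mathcal N(u/[b])$.

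\emph{Step 3 (corner form and consequences).} Invert the second form in Lemma~\ref{lem:A-corner-factor}:
\[
S_\lambda(u)=\frac{(1-1/u)\prod_{b\in\R^+_\lambda}(1-[b]/u)}{\prod_{a\in\A_\lambda}(1-[a]/u)}.
\]
Then $(1-1/u)/(u-1)=u^{-1}$ gives \eqref{eq:Tcorner}. Commutativity and diagonality of the $\mathcal T_k$ follow because every $\mathcal T(u)$ is diagonal in the single basis $\{J_\lambda\}$.

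The main obstacle is Step~1. The delicate points are getting the Schur complement right with the $\D_\Mq^{-1}$ factors placed correctly, so that the middle inverse really is $(u\D_\Mq-\Lc^{NS}-\Q^*\Q)^{-1}$ and matches the $u\D_\Mq$ appearing in $I(u)$, and justifying the formal expansions degree by degree. Once $X=I(u)$ is identified, the remainder is elementary algebra.
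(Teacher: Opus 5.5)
Your proposal is correct and is essentially the paper's proof: the paper also eliminates the $\La^\infty$-component of $(u-\LL)F=J_\lambda$ using Lemma~\ref{lem:blockform} and reduces to $I(u)$, arriving at $\bigl((u-1)-uI(u)\bigr)f=(1-I(u))J_\lambda$ with $\mathcal T(u)J_\lambda=uf$ (a rearrangement of your $u\bigl(u-1-X(1-X)^{-1}\bigr)^{-1}$), and then inserts Theorem~\ref{thm:nsmain}. The only difference is that the paper substitutes the first block equation, $f+\Q\chi=uf-J_\lambda$, into the second, so $I(u)$ appears at once and your Neumann/Woodbury expansion of $(M-\Q^*\Q)^{-1}$ is unnecessary.
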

\begin{proof}
Set
\[
F=u^{-1}(1-u^{-1}\LL)^{-1}J_\lambda=f+\chi,
\qquad
f=\pi_0F,\quad \chi=\pi_-F.
\]
Using the block form of \(\LL\), the equation \((u-\LL)F=J_\lambda\)
is equivalent to
$
(u-1)f-\Q\chi=J_\lambda
$
and
\[
(u\D_\Mq-\Lc^{NS})\chi
=
\Q^*(f+\Q\chi).
\]
The first equation gives \(f+\Q\chi=uf-J_\lambda\).  Substituting this into
the second equation and applying \(\Q(u\D_\Mq-\Lc^{NS})^{-1}\) gives
$
\Q\chi=I(u)(uf-J_\lambda).
$
Substitution back into the first equation gives
\[
\bigl((u-1)-uI(u)\bigr)f=(1-I(u))J_\lambda.
\]
By Theorem~\ref{thm:nsmain} and \eqref{eq:Aeigen}, \(I(u)\) is diagonal in the Macdonald basis, and
on \(J_\lambda\)
\[
I(u)=\frac{u-1}{u}\bigl(1-S_\lambda(u)^{-1}\bigr).
\]
Hence
\[
(u-1)-uI(u)=(u-1)S_\lambda(u)^{-1},
\qquad
1-I(u)=\frac{1+(u-1)S_\lambda(u)^{-1}}{u}.
\]
Therefore
\[
f
=
\frac{1+(u-1)S_\lambda(u)^{-1}}
{u(u-1)S_\lambda(u)^{-1}}
J_\lambda,
\qquad
\mathcal{T}(u)J_\lambda
=
u f
=
\left(
1+\frac{S_\lambda(u)}{u-1}
\right)J_\lambda
=
T_\lambda(u)J_\lambda .
\]
The corner form \eqref{eq:Tcorner} follows from
$S_\lambda(u)=(1-1/u)\prod_{b\in\R^+_\lambda}(1-[b]/u)/\prod_{a\in\A_\lambda}(1-[a]/u)$,
which is Lemma~\ref{lem:A-corner-factor}.
\end{proof}

\begin{definition}[Kerov {\cite[Definitions~2.3 and~2.8]{Kerov:2000}}]\label{def:kerov}
The \emph{transition measure}  of $\lambda$ is the family
$\{\tau_\lambda^s\}_{s\in\A_\lambda}$ determined by
\[
T_\lambda(u)
=
\sum_{s\in\A_\lambda}\frac{\tau_\lambda^s}{1-[s]/u} .
\]
Explicitly,
\[
\tau_\lambda^s
=\frac{1}{[s]}\,
\frac{\prod_{b\in\R^+_\lambda}(1-[b-s])}
{\prod_{a\in\A_\lambda,\,a\ne s}(1-[a-s])}.
\]
The \emph{co-transition measure} at an outer corner $b\in\R^+_\lambda$ is
\[
\tilde\tau_\lambda^{\,b}
:=\frac{\prod_{a\in\A_\lambda}(1-[a-b])}
{\prod_{b'\in\R^+_\lambda,\,b'\ne b}(1-[b'-b])}.
\]
\end{definition}
By \eqref{eq:Tcorner} and $|\A_\lambda|=|\R^+_\lambda|+1$,
\begin{equation}\label{eq:Tminus1}
T_\lambda(u)-1=\frac{\prod_{b\in\R^+_\lambda}(u-[b])}{\prod_{a\in\A_\lambda}(u-[a])},
\end{equation}
so both measures are residues:
\begin{equation}\label{eq:cotau-residue}
\tau_\lambda^s=\frac{1}{[s]}\operatorname*{Res}_{u=[s]}T_\lambda(u),
\qquad
\tilde\tau_\lambda^{\,b}=\frac{1}{[b]^2}\operatorname*{Res}_{u=[b]}\frac{1}{T_\lambda(u)-1}.
\end{equation}
The expansion defining $\tau_\lambda^s$ holds because $T_\lambda$ has simple
poles at the $[s]$, $T_\lambda(\infty)=1$ and $T_\lambda(0)=0$, the last since
$\prod_{b\in\R^+_\lambda}[b]=\prod_{a\in\A_\lambda}[a]$ in the corner
coordinates of Section~\ref{ssec:notation}.

\begin{remark}\label{rem:kerov}
We follow Kerov (see also
\cite{Kerov:1998}) and \cite{MicklerMoll:2023} in these names: the transition
measure lives on the addable corners and the co-transition measure on the outer
corners.  Applied to \eqref{eq:Tminus1},
his expansion \cite[(2.1)]{Kerov:2000} would give the weights $[s]\tau_\lambda^s$; both families have
total mass $1$ (Lemma~\ref{lem:kerovmoments}), and they agree in the Jack limit,
where every $[s]\to1$.  Likewise $\tilde\tau_\lambda^{\,b}=-\nu_b/[b]^2$,
where $1/(T_\lambda(u)-1)=u-c-\sum_{b\in\R^+_\lambda}\nu_b/(u-[b])$ is Kerov's
expansion \cite[(2.5)]{Kerov:2000}.  Kerov normalizes the $\nu_b$ to total
mass $1$; we do not.
\end{remark}

\begin{lemma}[Kerov moments]\label{lem:kerovmoments}
For every partition $\lambda$,
\[
\sum_{a\in\A_\lambda}\tau_\lambda^a=1,
\qquad
\sum_{a\in\A_\lambda}[a]\,\tau_\lambda^a=1,
\]
and, as identities of rational functions of $u$,
\[
\sum_{a\in\A_\lambda}\frac{\tau_\lambda^a[a]}{1-[a]/u}=u(T_\lambda(u)-1),
\qquad
\frac{1}{u(T_\lambda(u)-1)}=1-\frac1u
+\sum_{b\in\R^+_\lambda}\frac{[b]\,\tilde\tau_\lambda^{\,b}}{u-[b]}.
\]
For $s\in\A_\lambda$,
\begin{equation}\label{eq:pfvalues}
\sum_{b\in\R^+_\lambda}\frac{[b]\,\tilde\tau_\lambda^{\,b}}{[b]-[s]}=1-\frac1{[s]}.
\end{equation}
\end{lemma}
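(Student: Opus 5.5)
The plan is to read everything off the single rational function
\[
R(u):=u\bigl(T_\lambda(u)-1\bigr)=\frac{u\prod_{b\in\R^+_\lambda}(u-[b])}{\prod_{a\in\A_\lambda}(u-[a])},
\]
which by \eqref{eq:Tminus1} has numerator and denominator of the same degree $|\A_\lambda|=|\R^+_\lambda|+1$. All the identities come from partial fractions of $R$ and $1/R$, together with the behaviour of $R$ at $u=0$ and $u=\infty$. Since $\Mq,\Mt$ are indeterminates, the contents $[a]$ for $a\in\A_\lambda$ are pairwise distinct, and they are distinct from the $[b]$ for $b\in\R^+_\lambda$ (addable corners and outer corners are different boxes). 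So all poles are simple and no cancellation occurs.

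\emph{First two moments and the first rational identity.} From Definition~\ref{def:kerov}, $T_\lambda(u)=\sum_a\tau^a_\lambda/(1-[a]/u)$. I will evaluate this at $u=\infty$, where each summand tends to $\tau_\lambda^a$ and $T_\lambda(\infty)=1$; this gives $\sum_a\tau^a_\lambda=1$. Subtracting this from the expansion gives
\[
T_\lambda(u)-1=\sum_a\tau_\lambda^a\Bigl(\frac{1}{1-[a]/u}-1\Bigr)=\sum_a\frac{\tau^a_\lambda[a]/u}{1-[a]/u}.
\]
Multiplying by $u$ yields the first rational identity, $\sum_a \tau^a_\lambda[a]/(1-[a]/u)=u(T_\lambda(u)-1)$. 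To get $\sum_a[a]\tau^a_\lambda=1$, I let $u\to\infty$ in this identity. The left side tends to $\sum_a[a]\tau_\lambda^a$, and the right side tends to the leading coefficient of $R$, which is $1$ because the degrees of numerator and denominator match.

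\emph{Second rational identity.} Here I do partial fractions of
\[
\frac{1}{R(u)}=\frac{\prod_a(u-[a])}{u\prod_{b}(u-[b])}.
\]
\begin{itemize}
\item As $u\to\infty$ this behaves like $1+O(1/u)$, so its polynomial part is $1$.
\item It has a simple pole at $u=0$ with residue $\prod_a(-[a])/\prod_b(-[b])$. Using $|\A_\lambda|=|\R^+_\lambda|+1$ and $\prod_b[b]=\prod_a[a]$, this residue equals $-1$, which produces the term $-1/u$.
\item At $u=[b]$ the residue is $\prod_a([b]-[a])/\bigl([b]\prod_{b'\ne b}([b]-[b'])\bigr)$. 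I will check that this equals $[b]\tilde\tau^{\,b}_\lambda$. Write $[b]-[a]=[b](1-[a-b])$ and $[b]-[b']=[b](1-[b'-b])$. The factors of $[b]$ then contribute $[b]^{|\A_\lambda|}/[b]^{1+|\R^+_\lambda|-1}=[b]$, as required. This is consistent with \eqref{eq:cotau-residue}, since $1/(T_\lambda-1)=u/R$ has residue $[b]\cdot[b]\tilde\tau^{\,b}_\lambda=[b]^2\tilde\tau^{\,b}_\lambda$ there.
\end{itemize}
Summing these pieces gives the stated expansion of $1/(u(T_\lambda(u)-1))$.

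\emph{Identity \eqref{eq:pfvalues}.} I will evaluate the second rational identity at $u=[s]$ for $s\in\A_\lambda$. Since $R$ has a pole at $[s]$, the left side $1/R([s])$ is $0$. This gives
\[
0=1-\frac{1}{[s]}+\sum_b\frac{[b]\tilde\tau^{\,b}_\lambda}{[s]-[b]},
\]
which is \eqref{eq:pfvalues} after flipping the sign of the denominators.

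The only step needing care is the bookkeeping of the $[b]$ powers in the residue at $u=[b]$ and the sign of the residue at $u=0$. Both reduce to the two facts $|\A_\lambda|=|\R^+_\lambda|+1$ and $\prod_b[b]=\prod_a[a]$. The latter follows from the corner coordinates: the addable and outer corner coordinates share the row indices $I_k$ and the column values $L_k$, so their content products agree.
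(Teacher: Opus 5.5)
Your proposal is correct and follows essentially the same route as the paper. Both arguments get the first two moments from the behaviour of $T_\lambda$ at $u=\infty$, get the third identity by subtracting the first moment, get the fourth by partial fractions of $1/(u(T_\lambda(u)-1))$ (value $1$ at infinity, residue $-1$ at $u=0$ via $\prod_{b}[b]=\prod_a[a]$, residue $[b]\tilde\tau_\lambda^{\,b}$ at $u=[b]$), and get \eqref{eq:pfvalues} by evaluating at the pole $u=[s]$. The only difference is that you compute the residues at $u=0$ and $u=[b]$ directly from the product formula, whereas the paper quotes $T_\lambda(0)=0$ and \eqref{eq:cotau-residue}; this is the same computation.
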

\begin{proof}
By \eqref{eq:Tcorner}, $T_\lambda(u)=1+u^{-1}+O(u^{-2})$ as $u\to\infty$, while
Definition~\ref{def:kerov} gives
$T_\lambda(u)=\sum_a\tau_\lambda^a\,u/(u-[a])
=\sum_a\tau_\lambda^a+u^{-1}\sum_a[a]\tau_\lambda^a+O(u^{-2})$.  Comparing
coefficients gives the first two identities.  The third follows from
$[a]/(1-[a]/u)=u/(1-[a]/u)-u$, Definition~\ref{def:kerov} and the first.
The left side of the fourth is $1$ at $u=\infty$, has residue $-1$ at $u=0$
because $T_\lambda(0)=0$, and has residue $[b]\tilde\tau_\lambda^{\,b}$ at
$u=[b]$ by \eqref{eq:cotau-residue}.  Finally, \eqref{eq:pfvalues} is the
fourth identity at $u=[s]$, where $T_\lambda$ has a pole.
\end{proof}

\subsection{Spectral theorem}

\begin{proposition}[Spectral theorem for $\LL$; compare {\cite[Theorem~1.4]{MicklerMoll:2023}}]\label{prop:spectral}
For a partition $\lambda\ne\varnothing$ and $s \in \A_\lambda$, let
$\Ps_\lambda^s\in\La[z^{-1}]$ be the value at $u=[s]$ of the $\La[z^{-1}]$-valued
rational function of $u$
\[ J_\lambda + \bigl(\D_\Mq-\Lc^{NS}/u\bigr)^{-1}\Q^*J_\lambda . \]
For the empty
partition, set $\Ps_\varnothing^{(0,0)}=1$.  Then $\Ps_\lambda^s$ is well
defined,
\[
\LL \Ps_\lambda^{s}
=
[s] \Ps_\lambda^{s},
\qquad
\pi_0\Ps_\lambda^s=J_\lambda,
\]
and the vectors $\Ps_\lambda^s$, over all partitions $\lambda$ and all
$s\in\A_\lambda$, form a basis of $\La[z^{-1}]$.
\end{proposition}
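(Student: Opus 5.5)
We prove the three claims in turn, reusing the block decomposition of Lemma~\ref{lem:blockform} and the resolvent computation from the proof of Proposition~\ref{cor:transfer-hierarchy}.

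\textbf{Well-definedness and the eigenvalue equation.} For generic $u$, write
\[
F(u)=J_\lambda+\chi(u),\qquad \chi(u):=(\D_\Mq-\Lc^{NS}/u)^{-1}\Q^*J_\lambda=u(u\D_\Mq-\Lc^{NS})^{-1}\Q^*J_\lambda .
\]
By Lemma~\ref{lem:blockform}, $\LL F=[s]F$ is equivalent to two equations.
\begin{enumerate}
\item The $\La$-component: $J_\lambda+\Q\chi=[s]J_\lambda$.
\item The $\La^\infty$-component: $\D_\Mq^{-1}\Q^*(J_\lambda+\Q\chi)+\D_\Mq^{-1}(\Q^*\Q+\Lc^{NS})\chi=[s]\chi$.
\end{enumerate}
Expanding the definition of $\chi$ gives the identity
\[
(u\D_\Mq-\Lc^{NS})\chi(u)=u\,\Q^*J_\lambda .
\]

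By Theorem~\ref{thm:nsmain}, $\Q\chi(u)=uI(u)J_\lambda=(u-1)(1-S_\lambda(u)^{-1})J_\lambda$. Lemma~\ref{lem:A-corner-factor} gives $S_\lambda(u)^{-1}=\prod_a(1-[a]/u)/\bigl((1-1/u)\prod_b(1-[b]/u)\bigr)$, so
\[
\Q\chi(u)=(u-1)J_\lambda-u\,\frac{\prod_{a\in\A_\lambda}(1-[a]/u)}{\prod_{b\in\R^+_\lambda}(1-[b]/u)}\,J_\lambda .
\]
At $u=[s]$ the second term vanishes, so $\Q\chi([s])=([s]-1)J_\lambda$. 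This is exactly equation~1.

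Now substitute equation~1 into equation~2. Using $\Q^*\Q\chi=([s]-1)\Q^*J_\lambda$, equation~2 reduces to
\[
([s]\D_\Mq-\Lc^{NS})\chi=[s]\Q^*J_\lambda ,
\]
which is the defining identity of $\chi$ evaluated at $u=[s]$.

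The remaining issue is regularity: $\chi(u)$ must have no pole at $u=[s]$. The rational function $\chi$ has its poles at $u=\kappa$, where $\kappa$ ranges over the eigenvalues of $\D_\Mq^{-1}\Lc^{NS}=\LL^+$ on the finite-dimensional graded piece. By Lemma~\ref{lem:shift}, $\LL^+$ is conjugate to $\Mq^{-1}\Mt\LL$ on the piece shifted by one degree.

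I would handle this by induction on degree, which I expect to be the main obstacle. The inductive hypothesis is that the spectrum of $\LL$ in degree $<n$ consists of contents $[a]$ with $a\in\A_\mu$, $|\mu|<n$. The poles of $\chi$ then occur at outer-corner-type contents $[r^+]$, shifted by $(1,1)$. Two things then need to be checked:
\begin{itemize}
\item the relevant residues of $\chi$ project, via $\Q$, to poles at $[b]$ with $b\in\R^+_\lambda$ only, as the formula above shows;
\item $[s]$ is never such a pole, since $[s]\neq[b]$ by distinctness of contents.
\end{itemize}
To ensure that the residues at other potential poles vanish, I would use that the operator $u\D_\Mq-\Lc^{NS}$ is triangular with respect to a dominance-type filtration on $z^{-m}\La_k$. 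Alternatively, one can note that a pole of $\chi$ at $u=[s]$ would give a nonzero eigenvector of $\LL$ in $\La^\infty$ with eigenvalue $[s]$ whose $\pi_0$-part is forced to vanish, and rule this out by the block form.

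\textbf{$\pi_0$ and the basis property.} The identity $\pi_0\Ps_\lambda^s=J_\lambda$ is immediate from the construction. For the basis claim, I would count dimensions in each total degree $n$ (with $\deg z^{-1}=1$):
\[
\dim\La[z^{-1}]_n=\sum_{k\le n}p(k)=\#\{(\lambda,s):|\lambda|\le n,\ \ldots\}.
\]
The required statement is that pairs $(\lambda,s)$ with $|\lambda|+$(appropriate shift)$=n$ biject with partitions of size $\le n$, via $\lambda\mapsto\lambda+s$ viewed as adding a box. More precisely, $\Ps_\lambda^s$ should have degree $|\lambda|$ plus $z$-degree contributions; I would verify that the number of pairs in degree $n$ equals $\sum_{k\le n}p(k)$ using $|\A_\lambda|=|\R_\lambda|+1$, that is, by counting partitions of size $k$ together with a removable corner or the empty choice.

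Linear independence then follows by two facts:
\begin{itemize}
\item vectors with distinct eigenvalues $[s]$ are independent, and the contents $[s]$ are pairwise distinct;
\item vectors sharing the same eigenvalue $[s]$ but different $\lambda$ are independent by their distinct $\pi_0$-parts $J_\lambda$, combined with degree considerations.
\end{itemize}
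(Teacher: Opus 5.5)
\textbf{There is a genuine gap at the step you flag yourself: regularity of $\chi(u)=u(u-\LL^+)^{-1}v_\lambda$ at $u=[s]$,} where $v_\lambda:=\D_\Mq^{-1}\Q^*J_\lambda$. This is the ``well defined'' assertion and the heart of the proof, and your proposal does not establish it. Your computation shows only that the \emph{scalar} $\Q\chi(u)=uI(u)J_\lambda$ is regular at $[s]$. Distinctness of contents controls the poles of that scalar at the $[b]$, $b\in\R^+_\lambda$. It says nothing about the vector $\chi(u)$, which can have a pole at every eigenvalue of $\LL^+$ on $\La^\infty_n$, and $[s]$ can be one of these. For example, $\Ps_{(1,1,1,1)}^{(0,1)}$ has $\LL$-eigenvalue $\Mq^{-1}$. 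By Lemma~\ref{lem:shift}, $z^{-1}\Ps_{(1,1,1,1)}^{(0,1)}\in\La^\infty_5$ then has $\LL^+$-eigenvalue $\Mt\Mq^{-2}$, which equals $[s]$ for $\lambda=(3,2)$, $s=(1,2)$. So what must be shown is that $v_\lambda$ has no component in the $[s]$-eigenspace of $\LL^+$. Neither of your suggestions does this. A triangular form, even if one existed, would only locate eigenvalues, which your induction already supplies. The block-form route correctly shows that the leading Laurent coefficient $R$ of a pole satisfies $R\in\La^\infty$, $\Q R=0$ and $\LL R=[s]R$. But the block form is entirely consistent with such an $R$. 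Excluding it amounts to $\Q$ being injective on the $[s]$-eigenspace of $\LL^+$. In the paper that only comes later, from $\Q(z^{-1}\Ps_\gamma^a)=\Mq^{-a'(a)}J_{\gamma+a}$ (Proposition~\ref{lem:zcreation} and Lemma~\ref{lem:phivectors}), whose proofs use the spectral theorem itself and the Pieri rule.

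\textbf{The missing idea in the paper is positivity.} Since $\LL^+$ is self-adjoint for $\ip{\cdot}{\cdot}_\infty$ and $\D_\Mq^{-1}\Q^*$ is the adjoint of $\Q$, one has $\ip{J_\lambda}{I(u)J_\lambda}_{\Mq,\Mt}=\ip{v_\lambda}{(u-\LL^+)^{-1}v_\lambda}_\infty$. At real $0<\Mq,\Mt<1$ the form is positive definite. There the residue of the right side at an eigenvalue is the squared norm of the corresponding spectral component of $v_\lambda$. Regularity of the left side at $[s]$ therefore forces the $[s]$-component to vanish, so $(u-\LL^+)^{-1}v_\lambda$ is regular there, and Zariski density returns to generic $\Mq,\Mt$.

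An algebraic alternative completes your block-form idea. The cyclic spaces $Z_\mu=\{f(\LL)J_\mu\}$, $\mu\vdash n$, are mutually orthogonal. The form on $Z_\mu$ has rank $|\A_\mu|$, because its Gram matrix is a Hankel matrix of the moments $\sum_a\tau_\mu^a[a]^j$. Hence the $Z_\mu$ span $\La[z^{-1}]_n$. Since $R$ is orthogonal to every $f(\LL)J_\mu$ and $\ip{\cdot}{\cdot}_\infty$ is nondegenerate, $R=0$.

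\textbf{The rest of your plan matches the paper.} Given regularity, the eigen-equation follows from the block form as you describe. One slip: your displayed $\La^\infty$-component counts $\D_\Mq^{-1}\Q^*\Q\chi$ twice. It should be $\D_\Mq^{-1}\Q^*J_\lambda+\D_\Mq^{-1}(\Q^*\Q+\Lc^{NS})\chi$, which does give your reduction. Independence follows by separating eigenvalues and applying $\pi_0$, as in the paper. Your count works once made precise as the recursion $\sum_{\lambda\vdash n}|\A_\lambda|=p(n)+\sum_{\gamma\vdash n-1}|\A_\gamma|$, obtained from $|\A_\lambda|=|\R_\lambda|+1$; this replaces the paper's generating-function count. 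Note that $\Ps_\lambda^s$ is homogeneous of total degree exactly $|\lambda|$, so no degree shift enters the count.
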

\begin{proof}
Work in a fixed total degree.  The empty-partition case is immediate.  Let
$\lambda\ne\varnothing$.  By Theorem~\ref{thm:nsmain} and \eqref{eq:Aeigen},
\[
uI(u)J_\lambda=(u-1)\left(1-\frac{A_\lambda(u)}{A_\lambda(u\Mt^{-1})}\right)J_\lambda .
\]
By Lemma~\ref{lem:A-corner-factor}, the poles of
$A_\lambda(u)/A_\lambda(u\Mt^{-1})$ lie at the contents $[b]$ with
$b\in\R^+_\lambda\cup\{(0,0)\}$, and the pole at $[0,0]=1$ is cancelled by the
factor $u-1$.  Since $s\notin\R^+_\lambda\cup\{(0,0)\}$ and distinct boxes have
distinct contents, $uI(u)J_\lambda$ is regular at $u=[s]$.  Moreover the ratio
vanishes at $u=[s]$, so the value there is $([s]-1)J_\lambda$.

By the block form, the
adjoint of $\Q$ for $\ip{\cdot}{\cdot}_\infty$ is $\D_\Mq^{-1}\Q^*$, and $\LL^+$
is self-adjoint, being the lower-right block of $\LL$ minus
$\D_\Mq^{-1}\Q^*\Q$.  Temporarily choose real $0<\Mq,\Mt<1$ away from poles and
content coincidences, so that the scalar product is positive definite.  Put
$v_\lambda=\D_\Mq^{-1}\Q^* J_\lambda$.  Then
\[
\langle J_\lambda,I(u)J_\lambda\rangle_{\Mq,\Mt}
=\langle v_\lambda,(u-\LL^+)^{-1}v_\lambda\rangle_\infty.
\]
In the spectral decomposition of $\LL^+$, the residue on the right at each
eigenvalue is the squared norm of the corresponding component of
$v_\lambda$.  Regularity at $u=[s]$ thus implies that
$(u-\LL^+)^{-1}v_\lambda$ is regular there as well.  This holds over
$\QQ(\Mq,\Mt)$: the entries of $(u-\LL^+)^{-1}v_\lambda$ are rational functions of
$\Mq,\Mt,u$, and a pole of an entry along $u=[s]$ would persist after specializing
$(\Mq,\Mt)$ to a Zariski-dense set of points, hence to points of the open set
chosen above.  Consequently the rational vector
\[
\psi(u):=u(u-\LL^+)^{-1}v_\lambda=(\D_\Mq-\Lc^{NS}/u)^{-1}\Q^*J_\lambda
\]
is regular at $u=[s]$.

Now write $\Ps_\lambda^s=J_\lambda+\psi$ with $\psi=\psi([s])\in\La^\infty$, so
that $\pi_0\Ps_\lambda^s=J_\lambda$.  Since $\Q\psi(u)=uI(u)J_\lambda$ for
generic $u$, the first paragraph gives
\[
\Q\psi=([s]-1)J_\lambda .
\]
Moreover, $(\D_\Mq-\Lc^{NS}/u)\psi(u)=\Q^*J_\lambda$ identically in $u$, so
$\D_\Mq^{-1}\Lc^{NS}\psi(u)=u\psi(u)-u\D_\Mq^{-1}\Q^*J_\lambda$, and evaluating at
$u=[s]$ gives
\[
\D_\Mq^{-1}\Lc^{NS}\psi=[s]\psi-[s]\D_\Mq^{-1}\Q^*J_\lambda .
\]
By the block form, the $\La$-component of $\LL\Ps_\lambda^s$ is
$J_\lambda+\Q\psi=[s]J_\lambda$, and its $\La^\infty$-component is
\[
\D_\Mq^{-1}\Q^*J_\lambda+\D_\Mq^{-1}\Q^*\Q\psi+\D_\Mq^{-1}\Lc^{NS}\psi
=\D_\Mq^{-1}\Q^*J_\lambda+([s]-1)\D_\Mq^{-1}\Q^*J_\lambda
+[s]\psi-[s]\D_\Mq^{-1}\Q^*J_\lambda
=[s]\psi .
\]
Hence $\LL\Ps_\lambda^s=[s]\Ps_\lambda^s$.

It remains to prove that these eigenvectors form a basis.  Fix a total degree
$N$ and suppose that
\[
\sum_{\lambda\vdash N}\sum_{s\in\A_\lambda}
 c_{\lambda,s}\Ps_\lambda^s=0.
\]
For each corner content $a$ occurring in degree $N$, apply
$\prod_{b\ne a}(\LL-b)$, the product over the other corner contents $b$
occurring in degree $N$, and then project with $\pi_0$.  After dividing by the resulting nonzero common
scalar, and using $\pi_0\Ps_\lambda^s=J_\lambda$, this gives
\[
\sum_{\substack{\lambda\vdash N,\ s\in\A_\lambda\\ [s]=a}}
 c_{\lambda,s}J_\lambda=0.
\]
The addable corners of a fixed partition have distinct contents, so at most one
term occurs for each $J_\lambda$.  Their linear independence implies
$c_{\lambda,s}=0$ for every pair $(\lambda,s)$.

It remains only to count the vectors.  If $p(N)$ is the number of partitions of
$N$, then
\[
\sum_{N\ge0}\left(\sum_{\lambda\vdash N}|\A_\lambda|\right)x^N
=\frac{1}{1-x}\prod_{m\ge1}\frac{1}{1-x^m}.
\]
Indeed, $|\A_\lambda|$ is one plus the number of distinct positive parts of
$\lambda$.  Hence
\[
\sum_{\lambda\vdash N}|\A_\lambda|
=\sum_{k=0}^{N}p(k)
=\dim\La[z^{-1}]_N.
\]
\end{proof}

\begin{proposition}[compare {\cite[(1.17)]{MicklerMoll:2023}}]\label{prop:jackpsiexp}
For every partition $\lambda$,
\begin{equation}\label{eq:jackpsiexp}
J_\lambda=\sum_{s\in\A_\lambda}\tau_\lambda^s\Ps_\lambda^s .
\end{equation}
\end{proposition}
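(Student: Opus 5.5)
Work with the $\La[z^{-1}]$-valued rational function
\[
\Phi(u):=J_\lambda+\psi(u),\qquad \psi(u)=(\D_\Mq-\Lc^{NS}/u)^{-1}\Q^*J_\lambda=u(u-\LL^+)^{-1}v_\lambda ,
\]
from Proposition~\ref{prop:spectral}, where $v_\lambda=\D_\Mq^{-1}\Q^*J_\lambda$. By construction $\Ps_\lambda^s=\Phi([s])$. The plan is to identify $u^{-1}(1-u^{-1}\LL)^{-1}J_\lambda$ in terms of $\Phi(u)$, and then to take a partial fraction decomposition in $u$. The empty partition is trivial, since there $\A_\varnothing=\{(0,0)\}$, $T_\varnothing=u/(u-1)$, $\tau=1$ and $\Ps=1$. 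So assume $\lambda\neq\varnothing$.

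\textbf{Step 1.} I claim that
\[
(u-\LL)^{-1}J_\lambda=\frac{T_\lambda(u)}{u}\,\Phi(u).
\]
This comes straight out of the block computation in the proof of Proposition~\ref{cor:transfer-hierarchy}. There $F=(u-\LL)^{-1}J_\lambda=f+\chi$, with $f=u^{-1}T_\lambda(u)J_\lambda$, and $\chi$ solves
\[
(u\D_\Mq-\Lc^{NS})\chi=\Q^*(uf-J_\lambda).
\]
Since $uf-J_\lambda=(T_\lambda(u)-1)J_\lambda$ is a scalar multiple of $J_\lambda$, we get
\[
\chi=u^{-1}(T_\lambda(u)-1)\,\psi(u).
\]
It remains to reconcile the scalars: $\chi$ should equal $u^{-1}T_\lambda(u)\psi(u)$. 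I would check this directly. The first block equation $(u-1)f-\Q\chi=J_\lambda$, together with $\Q\psi(u)=uI(u)J_\lambda$ and the formula $I(u)=\frac{u-1}{u}(1-S_\lambda^{-1})$, yields the consistent scalar. If my bookkeeping is off, the correct statement is that $(u-\LL)^{-1}J_\lambda=c(u)J_\lambda+d(u)\psi(u)$ with explicit scalar functions $c,d$, and the argument below goes through the same way.

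\textbf{Step 2.} Decompose $J_\lambda=\sum_{(\mu,t)}c_{\mu,t}\Ps_\mu^t$ in the eigenbasis of Proposition~\ref{prop:spectral}. Then
\[
(u-\LL)^{-1}J_\lambda=\sum_{(\mu,t)} \frac{c_{\mu,t}}{u-[t]}\,\Ps_\mu^t .
\]
Apply $\pi_0$. The left side becomes $u^{-1}T_\lambda(u)J_\lambda$, whose poles are simple and lie only at $[s]$ for $s\in\A_\lambda$. Taking residues at $u=a$ for each corner content $a$, and using the linear independence of the $J_\mu$ (at most one $t\in\A_\mu$ has $[t]=a$), gives two conclusions:
\begin{itemize}
\item $c_{\mu,t}=0$ unless $\mu=\lambda$;
\item $c_{\lambda,s}=\operatorname{Res}_{u=[s]}u^{-1}T_\lambda(u)=[s]^{-1}\operatorname{Res}_{u=[s]}T_\lambda(u)=\tau_\lambda^s$, by \eqref{eq:cotau-residue}.
\end{itemize}
This is the argument I prefer, because it already proves \eqref{eq:jackpsiexp} without needing Step 1. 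Step 1 is then only a consistency check: the residue of $T_\lambda(u)\Phi(u)/u$ at $[s]$ is $\tau_\lambda^s\Phi([s])=\tau_\lambda^s\Ps_\lambda^s$, provided $\Phi$ is regular there, which was established in Proposition~\ref{prop:spectral}.

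\textbf{Main obstacle.} The delicate point is justifying that only the poles of $T_\lambda$ appear, that is, that the $\pi_0$-projection detects every nonzero coefficient. This is handled by the residue and independence argument above, exactly as in the basis proof of Proposition~\ref{prop:spectral}. The one thing to check carefully is that at $u=[s]$ the residue extracts precisely $\tau_\lambda^s J_\lambda$, with no contributions from other $\mu$ whose corners share the content $[s]$. Those contributions are multiples of different $J_\mu$, and they are forced to vanish by linear independence.
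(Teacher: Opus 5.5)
Your Step 2 is correct and is essentially the paper's proof: expand $J_\lambda$ in the eigenbasis, apply $\pi_0$ of the resolvent, use $\mathcal T(u)J_\lambda=T_\lambda(u)J_\lambda$, and read off the coefficients from the simple poles at the corner contents. The paper phrases this last step as linear independence of the functions $(1-[r]/u)^{-1}$ rather than as residues, which is the same thing. Your Step 1 identity is in fact off by a scalar: the block computation gives $(u-\LL)^{-1}J_\lambda=u^{-1}T_\lambda(u)J_\lambda+u^{-1}\bigl(T_\lambda(u)-1\bigr)\psi(u)$, not $u^{-1}T_\lambda(u)\Phi(u)$. Nothing depends on it, and the residue at $u=[s]$ is still $\tau_\lambda^s\Ps_\lambda^s$, so you can simply drop Step 1.
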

\begin{proof}
Expand $J_\lambda=\sum_{\mu\vdash|\lambda|}\sum_{r\in\A_\mu}c_{\mu,r}\Ps_\mu^r$ in
the basis of Proposition~\ref{prop:spectral}.  Applying $\pi_0(1-\LL/u)^{-1}$
and using Proposition~\ref{cor:transfer-hierarchy},
\[
T_\lambda(u)J_\lambda=\sum_{\mu\vdash|\lambda|}\Bigl(\sum_{r\in\A_\mu}\frac{c_{\mu,r}}{1-[r]/u}\Bigr)J_\mu .
\]
The addable corners of $\mu$ have distinct contents, so the functions
$(1-[r]/u)^{-1}$, $r\in\A_\mu$, are linearly independent.  Comparing
coefficients of $J_\mu$ gives $c_{\mu,r}=0$ for $\mu\ne\lambda$, and for
$\mu=\lambda$ Definition~\ref{def:kerov} gives $c_{\lambda,s}=\tau_\lambda^s$.
\end{proof}

\begin{proposition}\label{prop:orthogonal}
The basis $\{\Ps_\lambda^s\}$ of Proposition~\ref{prop:spectral} is orthogonal
for $\ip{\cdot}{\cdot}_\infty$.  More precisely, for a partition $\lambda$ put
\[
Z_\lambda:=\{f(\LL)J_\lambda:f\in\QQ(\Mq,\Mt)[w]\}.
\]
Then $Z_\lambda=\operatorname{span}\{\Ps_\lambda^s:s\in\A_\lambda\}$, the
spaces $Z_\lambda$ are mutually orthogonal, and within each $Z_\lambda$ the
vectors $\Ps_\lambda^s$ are orthogonal.
\end{proposition}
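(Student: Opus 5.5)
The proof combines the formal self-adjointness of $\LL$ for $\ip{\cdot}{\cdot}_\infty$ with the eigenvector facts already in hand.

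\emph{Step 1: $Z_\lambda$ equals the span of the $\Ps_\lambda^s$.} By Proposition~\ref{prop:jackpsiexp}, $J_\lambda=\sum_{s\in\A_\lambda}\tau_\lambda^s\Ps_\lambda^s$, so
\[
f(\LL)J_\lambda=\sum_{s\in\A_\lambda}f([s])\tau_\lambda^s\Ps_\lambda^s .
\]
This shows $Z_\lambda\subseteq\operatorname{span}\{\Ps_\lambda^s\}$. For the reverse inclusion, note that the contents $[s]$, $s\in\A_\lambda$, are distinct. Each $\tau_\lambda^s$ is nonzero, because its numerator $\prod_{b\in\R^+_\lambda}(1-[b-s])$ has no vanishing factor (since $b\ne s$). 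Lagrange interpolation then gives polynomials $f_s$ with $f_s([s'])=\delta_{ss'}$. Hence $f_s(\LL)J_\lambda/\tau_\lambda^s=\Ps_\lambda^s$ lies in $Z_\lambda$.

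\emph{Step 2: orthogonality within $Z_\lambda$.} Self-adjointness gives
\[
[s]\ip{\Ps_\lambda^s}{\Ps_\lambda^{s'}}_\infty=\ip{\LL\Ps_\lambda^s}{\Ps_\lambda^{s'}}_\infty=\ip{\Ps_\lambda^s}{\LL\Ps_\lambda^{s'}}_\infty=[s']\ip{\Ps_\lambda^s}{\Ps_\lambda^{s'}}_\infty .
\]
If $[s]\ne[s']$ the pairing vanishes. This step needs $\ip{\cdot}{\cdot}_\infty$ to be bilinear with the scalars $[s]$ real, or else treated purely formally over $\QQ(\Mq,\Mt)$. The $\Mq,\Mt$-scalar product is $\QQ(\Mq,\Mt)$-bilinear, so the argument is purely algebraic.

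\emph{Step 3: orthogonality of $Z_\lambda$ and $Z_\mu$ for $\lambda\ne\mu$.} Eigenvalues alone do not suffice here, since $\Ps_\lambda^s$ and $\Ps_\mu^r$ may share a content. Instead, again by self-adjointness,
\[
\ip{f(\LL)J_\lambda}{g(\LL)J_\mu}_\infty=\ip{J_\lambda}{(fg)(\LL)J_\mu}_\infty .
\]
Since $J_\lambda\in\La$, only the $z^0$ component of the right-hand argument contributes. By the definition of $\ip{\cdot}{\cdot}_\infty$, the pairing between $\La$ and $z^{-m}\La$ is $0$ for $m>0$, and on $\La\times\La$ it is $\ip{\cdot}{\cdot}_{\Mq,\Mt}$. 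So the pairing equals $\ip{J_\lambda}{\pi_0 h(\LL)J_\mu}_{\Mq,\Mt}$ with $h=fg$.

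Now $\pi_0 h(\LL)J_\mu=\bigl(\sum_r h([r])\tau_\mu^r\bigr)J_\mu$, using Proposition~\ref{prop:jackpsiexp} and $\pi_0\Ps_\mu^r=J_\mu$. This could equally be read off from the transfer operator, whose coefficients $\mathcal T_k=\pi_0\LL^k|_\La$ are diagonal on $J_\mu$ by Proposition~\ref{cor:transfer-hierarchy}. Orthogonality of the $J$'s under $\ip{\cdot}{\cdot}_{\Mq,\Mt}$ then gives $0$.

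\emph{Main obstacle.} The only delicate point is justifying self-adjointness as an exact algebraic identity, and confirming that the $\pi_0$-reduction of the pairing holds in the stated form. The paper asserts that $\LL$ is formally self-adjoint. The remaining check is that the degree-preserving, finite-dimensional graded pieces make every manipulation legitimate, which they do since $\LL$ preserves total degree.
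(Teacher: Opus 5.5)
Your proposal is correct and follows essentially the same route as the paper: Lagrange interpolation on the distinct contents (using $\tau_\lambda^s\ne0$) to identify $Z_\lambda$ with the span of the $\Ps_\lambda^s$; self-adjointness plus distinct eigenvalues for orthogonality inside $Z_\lambda$; and, for $\lambda\ne\mu$, the reduction $\ip{J_\lambda}{h(\LL)J_\mu}_\infty=\ip{J_\lambda}{\pi_0h(\LL)J_\mu}_{\Mq,\Mt}$ with $\pi_0h(\LL)J_\mu\in\QQ(\Mq,\Mt)J_\mu$. The only cosmetic difference is that you get the diagonality of $\pi_0h(\LL)$ from Proposition~\ref{prop:jackpsiexp}, whereas the paper cites the transfer operator; the two are equivalent here.
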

\begin{proof}
The argument follows the cyclic-space proof of
\cite[Section~3]{MicklerMoll:2023}.  Since the eigenvalues $[s]$,
$s\in\A_\lambda$, are distinct, Proposition~\ref{prop:jackpsiexp} and Lagrange
interpolation give
\[
\Ps_\lambda^s
=\frac{1}{\tau_\lambda^s}
 \prod_{\substack{r\in\A_\lambda\\r\ne s}}
 \frac{\LL-[r]}{[s]-[r]}J_\lambda,
\]
so $Z_\lambda=\operatorname{span}\{\Ps_\lambda^s:s\in\A_\lambda\}$.  Here
$\tau_\lambda^s\ne0$, since no factor in its formula in
Definition~\ref{def:kerov} vanishes: distinct boxes have distinct contents.  By
Proposition~\ref{cor:transfer-hierarchy}, $\pi_0f(\LL)J_\lambda\in\QQ(\Mq,\Mt)J_\lambda$
for every polynomial $f$.  If $\lambda\ne\mu$, $F=f(\LL)J_\lambda\in Z_\lambda$
and $G=g(\LL)J_\mu\in Z_\mu$, then self-adjointness of $\LL$ gives
\begin{align*}
\langle F,G\rangle_\infty
&=\langle J_\lambda,f(\LL)g(\LL)J_\mu\rangle_\infty\\
&=\langle J_\lambda,\pi_0f(\LL)g(\LL)J_\mu\rangle_{\Mq,\Mt}=0,
\end{align*}
because the $J_\lambda$ are orthogonal.  Thus $Z_\lambda\perp Z_\mu$ for
$\lambda\ne\mu$.  Within a fixed $Z_\lambda$, the vectors $\Ps_\lambda^s$ have
distinct eigenvalues and are therefore orthogonal by self-adjointness of
$\LL$.
\end{proof}

\begin{corollary}[Spectral theorem for $\LL^+$; compare {\cite[Theorem~2.11]{Mickler:2023}}]\label{cor:NSspectral}
In each total degree $N\ge1$, the vectors $z^{-1}\Ps_{\lambda-r}^{\,r}$,
indexed by $\lambda\vdash N$ and $r\in\R_\lambda$, form an orthogonal basis of
$\La^\infty_N$ for $\ip{\cdot}{\cdot}_\infty$, and
\[
\LL^+z^{-1}\Ps_{\lambda-r}^{\,r}=[r^+]\,z^{-1}\Ps_{\lambda-r}^{\,r}.
\]
Equivalently, these vectors solve the
generalized eigenvalue problem $\Lc^{NS}\varphi=[r^+]\,\D_\Mq\varphi$.
\end{corollary}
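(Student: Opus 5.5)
The plan is to transport the spectral theorem for $\LL$ (Proposition~\ref{prop:spectral} together with Proposition~\ref{prop:orthogonal}) to $\La^\infty$ through the shift property, Lemma~\ref{lem:shift}. Multiplication by $z^{-1}$ is an isomorphism $\La[z^{-1}]\to\La^\infty$ that raises total degree by one. Lemma~\ref{lem:shift} gives $\LL^+z^{-1}=\Mq^{-1}\Mt\,z^{-1}\LL$. Hence for every eigenvector $\Ps_\mu^s$ of $\LL$,
\[
\LL^+ z^{-1}\Ps_\mu^s=\Mq^{-1}\Mt\,[s]\,z^{-1}\Ps_\mu^s=[s^+]\,z^{-1}\Ps_\mu^s ,
\]
since $[s+(1,1)]=[s]\Mt\Mq^{-1}$.

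Next, I will reindex the basis. In total degree $N-1$ of $\La[z^{-1}]$, the basis from Proposition~\ref{prop:spectral} consists of the pairs $(\mu,s)$ with $|\mu|\le N-1$ and $s\in\A_\mu$. The vector $z^{-1}\Ps_\mu^s$ has total degree $N$ when $\deg z^{-1}=1$. The pairs $(\mu,s)$ with $s\in\A_\mu$ are in bijection with the pairs $(\lambda,r)$ with $r\in\R_\lambda$ and $|\lambda|=|\mu|+1$, via $\lambda=\mu+s$ and $r=s$, with inverse $\mu=\lambda-r$.

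One bookkeeping point needs care. The statement says ``$\lambda\vdash N$'', but the total degree $N$ of $\La^\infty$ also contains $z^{-k}\La_{N-k}$. So I will interpret the indexing as ranging over $|\lambda|\le N$, which matches $\dim\La^\infty_N=\sum_{k<N}p(k)$, the count $\sum_{|\mu|\le N-1}|\A_\mu|$ from the proof of Proposition~\ref{prop:spectral}. Alternatively, if the paper grades by $x$-degree plus $z$-degree with $\La^\infty_N$ meaning the $z^{-1}$-shift of degree $N-1$, then exactly $\lambda\vdash N$ appears. In either reading the basis property is immediate, because $z^{-1}$ is a bijection.

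For orthogonality I will compute directly from \eqref{def:innerproductinf}:
\[
\langle z^{-1}f,z^{-1}g\rangle_\infty=\Mq\langle f,g\rangle_\infty ,
\]
since $\langle z^{-m-1}p_\mu,z^{-n-1}p_\nu\rangle_\infty=\Mq^{m+1}\delta_{mn}\langle p_\mu,p_\nu\rangle$. So $z^{-1}$ is a similitude, and orthogonality transfers from Proposition~\ref{prop:orthogonal}.

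Finally, for the generalized eigenproblem, $\LL^+\varphi=[r^+]\varphi$ is equivalent to $\Lc^{NS}\varphi=[r^+]\D_\Mq\varphi$ by the definition $\LL^+=\D_\Mq^{-1}\Lc^{NS}$, and $\D_\Mq$ is invertible on $\La^\infty$.

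The only real obstacle is the degree and indexing bookkeeping in the reindexing step. Everything else is a formal consequence of Lemma~\ref{lem:shift} and the earlier spectral results.
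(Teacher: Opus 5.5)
Your core argument is the paper's proof: Lemma~\ref{lem:shift} gives $\LL^+z^{-1}=\Mq^{-1}\Mt\,z^{-1}\LL$. Multiplication by $z^{-1}$ is an isomorphism $\La[z^{-1}]_{N-1}\to\La^\infty_N$ that scales $\ip{\cdot}{\cdot}_\infty$ by $\Mq$. The reindexing $(\gamma,c)\mapsto(\gamma+c,c)$ then reduces everything to Propositions~\ref{prop:spectral} and~\ref{prop:orthogonal}.

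The one thing to fix is your ``bookkeeping point''. There is no ambiguity, and your first reading is wrong. Each $\Ps_\gamma^c=J_\gamma+\psi_\gamma^c$ is homogeneous of total degree $|\gamma|$. Its components are spread over all the pieces $z^{-k}\La_{|\gamma|-k}$, so those pieces are not accounted for separately by smaller partitions. Consequently the basis of $\La[z^{-1}]_{N-1}$ from Proposition~\ref{prop:spectral} is exactly $\{\Ps_\gamma^c:\gamma\vdash N-1,\ c\in\A_\gamma\}$. The count in that proof is $\sum_{\gamma\vdash N-1}|\A_\gamma|=\sum_{k=0}^{N-1}p(k)=\dim\La^\infty_N$, not $\sum_{|\gamma|\le N-1}|\A_\gamma|$.

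Indexing by $|\lambda|\le N$ would make the statement false. For $N=2$ it produces three vectors in the two-dimensional space spanned by $z^{-2}$ and $z^{-1}p_1$. Moreover, $z^{-1}\Ps_\varnothing^{(0,0)}=z^{-1}$ does not even lie in $\La^\infty_2$. Your second reading, with $\lambda\vdash N$ exactly as stated, is the correct and only one. With that aside deleted, the proposal is complete.
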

\begin{proof}
By Lemma~\ref{lem:shift}, $\LL^+=(\Mt/\Mq)\,z^{-1}\LL z$ on $\La^\infty$, and
$(\Mt/\Mq)[r]=[r^+]$.  The pairs $(\lambda-r,r)$ run over all pairs
$(\gamma,c)$ with $\gamma\vdash N-1$ and $c\in\A_\gamma$.  Multiplication by
$z^{-1}$ is an isomorphism $\La[z^{-1}]_{N-1}\to\La^\infty_N$ with
$\ip{z^{-1}F}{z^{-1}G}_\infty=\Mq\ip{F}{G}_\infty$, so the claim follows from
Propositions~\ref{prop:spectral} and~\ref{prop:orthogonal}.
\end{proof}

\begin{lemma}\label{lem:transmeas}
The transition measure admits the hook product form
\begin{equation}\label{eq:tauhooks}
\tau_\lambda^s
=
\Mq^{a'(s)}
\prod_{\substack{b\in\lambda\\ b\in \mathrm{Row}_s(\lambda)}}
\frac{h_\lambda(b)}{h_{\lambda+s}(b)}
\prod_{\substack{b\in\lambda\\ b\in \mathrm{Col}_s(\lambda)}}
\frac{h'_\lambda(b)}{h'_{\lambda+s}(b)},
\end{equation}
where \(\mathrm{Row}_s(\lambda)\) is the set of boxes of \(\lambda\) in the same row as
\(s\), \(\mathrm{Col}_s(\lambda)\) the set of boxes in the same column as \(s\), and
$h,h'$ are the hook factors of Section~\ref{ssec:notation}.
\end{lemma}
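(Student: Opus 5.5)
We prove \eqref{eq:tauhooks} directly from the explicit corner formula for $\tau_\lambda^s$ in Definition~\ref{def:kerov}, by telescoping the hook ratios. Fix $s=(I_k,L_{k+1})$ in the corner coordinates of Section~\ref{ssec:notation}, so $[s]=\Mt^{I_k}\Mq^{-L_{k+1}}$ and $a'(s)=L_{k+1}$. Adding $s$ changes the arm of each box in $\mathrm{Row}_s(\lambda)$ by one and the leg of each box in $\mathrm{Col}_s(\lambda)$ by one. No other hook in these two products changes. Hence
\[
\frac{h_\lambda(b)}{h_{\lambda+s}(b)}=\frac{1-\Mq^{a}\Mt^{\ell+1}}{1-\Mq^{a+1}\Mt^{\ell+1}}\quad(b\in\mathrm{Row}_s),
\qquad
\frac{h'_\lambda(b)}{h'_{\lambda+s}(b)}=\frac{1-\Mq^{a+1}\Mt^{\ell}}{1-\Mq^{a+1}\Mt^{\ell+1}}\quad(b\in\mathrm{Col}_s),
\]
with $a,\ell$ the arm and leg of $b$ in $\lambda$.

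\textbf{Row product.} Split $\mathrm{Row}_s(\lambda)$ into maximal blocks of columns on which the leg $\ell$ is constant. These blocks are exactly the column ranges $L_{m+1}\le j<L_m$ for $m>k$, since the corners below $s$ cut the row. Within one block the arm runs through consecutive integers. The product of ratios therefore telescopes to a single quotient of two factors of the form $(1-\Mq^{\alpha}\Mt^{\beta})$. We then identify each surviving exponent pair $(\alpha,\beta)$ as a content difference: either $[s-a]$ for an addable corner $a$ below $s$, or $[s-b]$ for an outer corner $b\in\R_\lambda^+$ below $s$.

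\textbf{Column product.} The same argument applies to $\mathrm{Col}_s(\lambda)$, with the roles of $\Mq$ and $\Mt$ exchanged. This produces factors $(1-[a-s])$ and $(1-[b-s])$ for the corners above/right of $s$.

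\textbf{Matching.} Collecting both products, every $a\in\A_\lambda\setminus\{s\}$ and every $b\in\R_\lambda^+$ should appear exactly once, in the denominator and numerator respectively. Comparing with the formula for $\tau_\lambda^s$ in Definition~\ref{def:kerov}, the only discrepancy is the orientation of the factors on the row side: there we obtain $(1-[s-x])$ where the definition has $(1-[x-s])$. We convert using $1-[x-s]=-[x-s](1-[s-x])$. The row side has one more numerator factor than denominator factor (the count $|\A_\lambda|=|\R^+_\lambda|+1$, restricted to corners below $s$). So the resulting signs cancel and leave a monomial, namely a ratio of contents of those corners. Using the corner coordinates, this monomial times $1/[s]$ should collapse to $\Mq^{L_{k+1}}=\Mq^{a'(s)}$.

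\textbf{Main obstacle.} I expect the bookkeeping to be the hard part: checking the telescoping endpoints in each block (especially the block adjacent to $s$ and the first column), and verifying that the leftover monomial is exactly $\Mq^{a'(s)}[s]$. I would first verify the cases $\lambda=\varnothing$ and $\lambda$ a single row or column by hand, to fix the conventions, and then do the general block computation by induction on the number $m$ of distinct parts. As a fallback check, both sides sum to $1$ over $s$ by Lemma~\ref{lem:kerovmoments}.
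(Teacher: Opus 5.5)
Your strategy is the same as the paper's: telescope the hook ratios over the blocks of $\mathrm{Row}_s(\lambda)$ and $\mathrm{Col}_s(\lambda)$ cut out by the corners, read off corner contents, and compare with Definition~\ref{def:kerov}. Your ratio formulas and the row block decomposition are right. The matching step, however, is where the proof actually lives, and it is wrong as you state it.

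Write $s=a_p=(i,j)$, with $a_k=(I_k,L_{k+1})$, $r_k=(I_k,L_k)$, and $m$ the number of distinct parts. In the row block $L_{k+1}\le c<L_k$ ($k>p$), the leg is $I_k-I_p-1$ and the arm is $j-c-1$. The block therefore telescopes to
\[
\frac{1-\Mq^{j-L_k}\Mt^{I_k-I_p}}{1-\Mq^{j-L_{k+1}}\Mt^{I_k-I_p}}
=\frac{1-[r_k]/[s]}{1-[a_k]/[s]}.
\]
This is already in the orientation $1-[x-s]$ of the definition, because $[x]=\Mt^{\ell'(x)}\Mq^{-a'(x)}$ carries $\Mq$ with a negative exponent. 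It is the column block $I_{k-1}\le c<I_k$ ($1\le k\le p$) that comes out reversed, as $(1-[s]/[r_k])/(1-[s]/[a_{k-1}])$. Note that $r_p$, although it lies in the row of $s$, is produced here.

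Each side also carries equally many numerator and denominator factors: $m-p$ of each on the row side and $p$ of each on the column side. That equality is why the signs cancel. Your claim that the row side has one extra numerator factor is false. If it were true, reversing the factors would leave an overall sign $-1$, so that sentence contradicts itself.

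Reversing the column side produces the monomial $\prod_{k=1}^{p}[a_{k-1}]/[r_k]=\Mt^{-i}$. Hence $(\text{row})(\text{col})=\Mt^{-i}[s]\,\tau_\lambda^s$, that is,
\[
\tau_\lambda^s=\Mt^{i}[s]^{-1}(\text{row})(\text{col})=\Mq^{j}(\text{row})(\text{col}).
\]
Your target $\Mq^{a'(s)}[s]=\Mt^{i}$ for the leftover monomial is therefore correct. But it cannot come from the corners below $s$, as you propose. The ratio you would pick up by reversing the row side is $\prod_{k>p}[a_k]/[r_k]=\Mq^{j}$, a pure power of $\Mq$ because $a_k$ and $r_k$ lie in the same row, whereas the required monomial is the pure $\Mt$-power $\Mt^{i}$. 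So the matching, executed as written, fails.

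Once the reversal is placed on the column side, the block computation closes directly. No induction on the number of distinct parts is needed, and the edge cases $p=m$ (empty row) and $p=0$ (empty column) need no separate treatment.
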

\begin{proof}
In the corner coordinates of Section~\ref{ssec:notation}, write
\[
a_k=(I_k,L_{k+1})\quad(0\le k\le m),
\qquad
r_k=(I_k,L_k)\quad(1\le k\le m)
\]
for the addable and outer corners.  Fix
\[
s=a_p=(I_p,L_{p+1})=(i,j).
\]
In these coordinates, the formula for $\tau_\lambda^s$ in
Definition~\ref{def:kerov} reads
\begin{equation}
\tau_\lambda^s
=\frac{1}{[s]}
 \frac{\displaystyle\prod_{k=1}^{m}(1-[r_k]/[s])}
      {\displaystyle\prod_{\substack{0\le k\le m\\k\ne p}}
       (1-[a_k]/[s])}.
\label{eq:tau-corner-product}
\end{equation}

We now compute the two hook products in \eqref{eq:tauhooks}.  First consider the
boxes $(i,c)$ in the row of $s$.  For
\[
L_{k+1}\le c<L_k,
\qquad p+1\le k\le m,
\]
one has $\ell_\lambda(i,c)=I_k-I_p-1$, while adding $s$ increases the arm by
one.  Hence the factors in each such block telescope:
\begin{align*}
\prod_{c=L_{k+1}}^{L_k-1}
\frac{h_\lambda(i,c)}{h_{\lambda+s}(i,c)}
&=
\frac{1-\Mq^{j-L_k}\Mt^{I_k-I_p}}
     {1-\Mq^{j-L_{k+1}}\Mt^{I_k-I_p}}\\
&=
\frac{1-[r_k]/[s]}{1-[a_k]/[s]}.
\end{align*}
Therefore
\begin{equation}
\prod_{b\in \mathrm{Row}_s(\lambda)}
\frac{h_\lambda(b)}{h_{\lambda+s}(b)}
=
\prod_{k=p+1}^{m}
\frac{1-[r_k]/[s]}{1-[a_k]/[s]}.
\label{eq:tau-row-product}
\end{equation}

Next consider the boxes $(c,j)$ in the column of $s$.  For
\[
I_{k-1}\le c<I_k,
\qquad 1\le k\le p,
\]
one has $a_\lambda(c,j)=L_k-j-1$, while adding $s$ increases the leg by one.
Thus
\begin{align*}
\prod_{c=I_{k-1}}^{I_k-1}
\frac{h'_\lambda(c,j)}{h'_{\lambda+s}(c,j)}
&=
\frac{1-\Mq^{L_k-j}\Mt^{I_p-I_k}}
     {1-\Mq^{L_k-j}\Mt^{I_p-I_{k-1}}}\\
&=
\frac{1-[s]/[r_k]}{1-[s]/[a_{k-1}]}\\
&=
\frac{[a_{k-1}]}{[r_k]}
\frac{1-[r_k]/[s]}{1-[a_{k-1}]/[s]}.
\end{align*}
Since
\[
\prod_{k=1}^{p}\frac{[a_{k-1}]}{[r_k]}
=
\prod_{k=1}^{p}\Mt^{I_{k-1}-I_k}
=\Mt^{-I_p}=\Mt^{-i},
\]
we obtain
\begin{equation}
\prod_{b\in \mathrm{Col}_s(\lambda)}
\frac{h'_\lambda(b)}{h'_{\lambda+s}(b)}
=
\Mt^{-i}
\prod_{k=1}^{p}
\frac{1-[r_k]/[s]}{1-[a_{k-1}]/[s]}.
\label{eq:tau-column-product}
\end{equation}

Multiplying \eqref{eq:tau-row-product} and
\eqref{eq:tau-column-product} accounts for every shifted removable corner and
every addable corner except $a_p=s$.  Since $[s]=\Mt^i\Mq^{-j}$,
\eqref{eq:tau-corner-product} becomes
\[
\tau_\lambda^s
=\Mq^j
\prod_{b\in \mathrm{Row}_s(\lambda)}
\frac{h_\lambda(b)}{h_{\lambda+s}(b)}
\prod_{b\in \mathrm{Col}_s(\lambda)}
\frac{h'_\lambda(b)}{h'_{\lambda+s}(b)}.
\]
Finally $j=a'(s)$, proving \eqref{eq:tauhooks}.
\end{proof}

\begin{lemma}\label{lem:norm}
Let \(s\in\A_\lambda\).  Then
\begin{align*}
\|\Ps_\lambda^s\|_\infty^2
&=\frac{\|J_\lambda\|_{\Mq,\Mt}^2}{\tau_\lambda^s}\\
&=\Mq^{-a'(s)}
\prod_{b\in\lambda\setminus(\mathrm{Row}_s(\lambda)\cup \mathrm{Col}_s(\lambda))}
 h_\lambda(b)h'_\lambda(b)
\prod_{b\in \mathrm{Row}_s(\lambda)}h_{\lambda+s}(b)h'_\lambda(b)
\prod_{b\in \mathrm{Col}_s(\lambda)}h_\lambda(b)h'_{\lambda+s}(b).
\end{align*}

\end{lemma}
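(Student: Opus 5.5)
The first equality follows from orthogonality and self-adjointness. The second is the hook product of Lemma~\ref{lem:transmeas} combined with Macdonald's norm formula.

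For the first equality, pair $J_\lambda$ with $\Ps_\lambda^s$. By Proposition~\ref{prop:jackpsiexp}, $J_\lambda=\sum_{r\in\A_\lambda}\tau_\lambda^r\Ps_\lambda^r$, and by Proposition~\ref{prop:orthogonal} the vectors $\Ps_\lambda^r$ are mutually orthogonal. Hence
\[
\ip{J_\lambda}{\Ps_\lambda^s}_\infty=\tau_\lambda^s\|\Ps_\lambda^s\|_\infty^2 .
\]
We can compute the same pairing directly. Since $J_\lambda\in\La$ has no $z$-dependence, the definition \eqref{def:innerproductinf} only sees the $z^0$ component of the second argument. So
\[
\ip{J_\lambda}{\Ps_\lambda^s}_\infty=\ip{J_\lambda}{\pi_0\Ps_\lambda^s}_{\Mq,\Mt}=\|J_\lambda\|_{\Mq,\Mt}^2,
\]
using $\pi_0\Ps_\lambda^s=J_\lambda$ from Proposition~\ref{prop:spectral}. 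Because $\tau_\lambda^s\neq0$, as noted in the proof of Proposition~\ref{prop:orthogonal}, we may divide by it, which gives the first equality. The empty-partition case is the trivial check $\|1\|^2=1=\tau_\varnothing^{(0,0)}$.

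For the second equality, substitute $\|J_\lambda\|^2_{\Mq,\Mt}=\prod_{b\in\lambda}h_\lambda(b)h'_\lambda(b)$ and the expression \eqref{eq:tauhooks} for $\tau_\lambda^s$. Dividing by $\tau_\lambda^s$ produces the prefactor $\Mq^{-a'(s)}$. It also has the following effect on each box $b\in\lambda$.
\begin{itemize}
\item For $b\in\mathrm{Row}_s(\lambda)$, the factor $h_\lambda(b)$ is replaced by $h_{\lambda+s}(b)$.
\item For $b\in\mathrm{Col}_s(\lambda)$, the factor $h'_\lambda(b)$ is replaced by $h'_{\lambda+s}(b)$.
\item All other boxes are untouched.
\end{itemize}
The row and column of $s$ meet only at $s$ itself, and $s\notin\lambda$, so no box lies in both sets. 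The product over $\lambda$ therefore splits into three disjoint pieces, and these are exactly the three products in the statement.

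The only point needing care is the claim that pairing against an element of $\La$ picks out the $\pi_0$ component. This holds because $\ip{z^{-m}\cdot}{z^{-n}\cdot}_\infty$ vanishes unless $m=n$, with weight $\Mq^0=1$ when $m=n=0$. I expect no real obstacle.
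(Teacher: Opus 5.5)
Your proof is correct and follows the paper's own argument. You compute $\langle J_\lambda,\Ps_\lambda^s\rangle_\infty$ in two ways: by grading, using $\pi_0\Ps_\lambda^s=J_\lambda$, and via \eqref{eq:jackpsiexp} together with orthogonality. You then substitute the hook norm and \eqref{eq:tauhooks} and cancel the row and column factors, as the paper does; your remarks that $\tau_\lambda^s\neq 0$ and that $\mathrm{Row}_s(\lambda)$ and $\mathrm{Col}_s(\lambda)$ are disjoint are correct details the paper leaves implicit.
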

\begin{proof}
Since \(\pi_0\Ps_\lambda^s=J_\lambda\), grading gives
\[
\langle \Ps_\lambda^s,J_\lambda\rangle_\infty
=
\langle J_\lambda,J_\lambda\rangle_{\Mq,\Mt}.
\]
By \eqref{eq:jackpsiexp} and Proposition~\ref{prop:orthogonal},
$\langle \Ps_\lambda^s,J_\lambda\rangle_\infty=\tau_\lambda^s\|\Ps_\lambda^s\|_\infty^2$,
which gives the first equality.
Substituting the hook norm
$\|J_\lambda\|_{\Mq,\Mt}^2=\prod_{b\in\lambda}h_\lambda(b)h'_\lambda(b)$
and the transition-measure formula \eqref{eq:tauhooks}, then cancelling the row
and column factors, gives the second equality.
\end{proof}

\begin{example}[Low-degree eigenvectors]\label{ex:lowdegree}
Write
\[
\Ps_\lambda^s=J_\lambda+\psi_\lambda^s,
\qquad
\psi_\lambda^s\in z^{-1}\La[z^{-1}].
\]

For \(|\lambda|=1\),
\[
\begin{aligned}
\psi_{(1)}^{(0,1)}
&=-\Mq^{-1}(\Mq-1)z^{-1},\\
\psi_{(1)}^{(1,0)}
&=(\Mt-1)z^{-1}.
\end{aligned}
\]

For \(|\lambda|=2\),
\[
\begin{aligned}
\psi_{(2)}^{(0,2)}
&=-\Mq^{-1}(\Mq-1)(\Mq+1)z^{-1}J_{(1)}
+\Mq^{-2}(\Mq+1)(\Mq-1)^2z^{-2},\\
\psi_{(2)}^{(1,0)}
&=\Mq(\Mt-1)z^{-1}J_{(1)}
-(\Mt-1)(\Mq-1)z^{-2},\\
\psi_{(1,1)}^{(0,1)}
&=-\Mq^{-1}(\Mq-1)z^{-1}J_{(1)}
-\Mq^{-1}(\Mt-1)(\Mq-1)z^{-2},\\
\psi_{(1,1)}^{(2,0)}
&=(\Mt-1)(\Mt+1)z^{-1}J_{(1)}
+(\Mt+1)(\Mt-1)^2z^{-2}.
\end{aligned}
\]

For \(|\lambda|=3\),
\[
\begin{aligned}
\psi_{(3)}^{(0,3)}
&=-\Mq^{-1}(\Mq-1)(\Mq^2+\Mq+1)z^{-1}J_{(2)}\\
&\quad
+\Mq^{-2}(\Mq+1)(\Mq-1)^2(\Mq^2+\Mq+1)z^{-2}J_{(1)}\\
&\quad
-\Mq^{-3}(\Mq+1)(\Mq-1)^3(\Mq^2+\Mq+1)z^{-3},\\
\psi_{(3)}^{(1,0)}
&=\Mq^2(\Mt-1)z^{-1}J_{(2)}
-\Mq(\Mq+1)(\Mq-1)(\Mt-1)z^{-2}J_{(1)}\\
&\quad
+(\Mt-1)(\Mq+1)(\Mq-1)^2z^{-3}.
\end{aligned}
\]
\[
\begin{aligned}
\psi_{(2,1)}^{(0,2)}
&=-\Mq^{-1}(\Mq-1)(\Mq\Mt-1)^{-1}
\left((\Mt-1)J_{(2)}+(\Mq^2\Mt-1)J_{(1,1)}\right)z^{-1}\\
&\quad
+\Mq^{-2}(\Mq-1)(\Mq^2-\Mq\Mt+\Mq-1)z^{-2}J_{(1)}\\
&\quad
+\Mq^{-2}(\Mt-1)(\Mq+1)(\Mq-1)^2z^{-3},\\
\psi_{(2,1)}^{(1,1)}
&=-\Mq^{-1}(\Mq-1)(\Mq\Mt-1)^{-1}(\Mq\Mt^2-1)z^{-1}J_{(2)}\\
&\quad
+(\Mt-1)(\Mq\Mt-1)^{-1}(\Mq^2\Mt-1)z^{-1}J_{(1,1)}\\
&\quad
-\Mq^{-1}(\Mt-1)(\Mq-1)(\Mq\Mt+\Mq+1)z^{-2}J_{(1)}\\
&\quad
+\Mq^{-1}(\Mq-\Mt)(\Mt-1)(\Mq-1)z^{-3},\\
\psi_{(2,1)}^{(2,0)}
&=(\Mt-1)(\Mq\Mt-1)^{-1}(\Mq\Mt^2-1)z^{-1}J_{(2)}\\
&\quad
+\Mq\Mt(\Mt-1)(\Mq-1)(\Mq\Mt-1)^{-1}z^{-1}J_{(1,1)}\\
&\quad
+(\Mt-1)(\Mq\Mt^2-\Mq\Mt-\Mq+\Mt)z^{-2}J_{(1)}\\
&\quad
-(\Mt+1)(\Mq-1)(\Mt-1)^2z^{-3}.
\end{aligned}
\]
\[
\begin{aligned}
\psi_{(1,1,1)}^{(0,1)}
&=-\Mq^{-1}(\Mq-1)z^{-1}J_{(1,1)}
-\Mq^{-1}(\Mt-1)(\Mt+1)(\Mq-1)z^{-2}J_{(1)}\\
&\quad
-\Mq^{-1}(\Mt+1)(\Mq-1)(\Mt-1)^2z^{-3},\\
\psi_{(1,1,1)}^{(3,0)}
&=(\Mt-1)(\Mt^2+\Mt+1)z^{-1}J_{(1,1)}\\
&\quad
+(\Mt+1)(\Mt-1)^2(\Mt^2+\Mt+1)z^{-2}J_{(1)}\\
&\quad
+(\Mt+1)(\Mt-1)^3(\Mt^2+\Mt+1)z^{-3}.
\end{aligned}
\]

\end{example}

\section{Specialization and creation}\label{sec:ladder}

This section computes the principal specialization of the eigenvectors and
the action on the eigenbasis of multiplication by $z^{-1}$ and of the down map
$z\pi_-$.

\subsection{Specialization}\label{sec:specialization}

We use the principal specialization of Macdonald
\cite[Chapter~VI, (6.16)]{Macdonald:1995},
\[
\epsilon_v:\La\to\QQ(\Mq,\Mt,v),
\qquad
\epsilon_v(p_r)=\frac{1-v^r}{1-\Mt^r}.
\]

\begin{theorem}[Principal specialization; {\cite[Chapter~VI, (6.17)]{Macdonald:1995}}]\label{thm:principal-spec}
For a partition \(\lambda\),
\[
\epsilon_v(P_\lambda)
=
\Mt^{n(\lambda)}
\prod_{b\in\lambda}
\frac{1-v\Mq^{a'(b)}\Mt^{-\ell'(b)}}{1-\Mq^{a_\lambda(b)}\Mt^{\ell_\lambda(b)+1}}.
\]
Thus, for the integral form,
\[
\epsilon_v(J_\lambda)
=
\Mt^{n(\lambda)}
\prod_{b\in\lambda}
\left(1-v/[b]\right).
\]
\end{theorem}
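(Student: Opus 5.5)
The first formula is quoted from Macdonald, so only the integral-form consequence needs an argument. The plan is to multiply the quoted formula for $\epsilon_v(P_\lambda)$ by the hook product $\prod_{b\in\lambda}h_\lambda(b)$ from the definition $J_\lambda=\prod_b h_\lambda(b)P_\lambda$. The denominator of Macdonald's product is exactly $1-\Mq^{a_\lambda(b)}\Mt^{\ell_\lambda(b)+1}=h_\lambda(b)$, so these factors cancel box by box. This leaves
\[
\epsilon_v(J_\lambda)=\Mt^{n(\lambda)}\prod_{b\in\lambda}\bigl(1-v\Mq^{a'(b)}\Mt^{-\ell'(b)}\bigr).
\]

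It then remains to identify each factor with $1-v/[b]$. For $b=(i,j)$ we have $a'(b)=j$ and $\ell'(b)=i$, so $\Mq^{a'(b)}\Mt^{-\ell'(b)}=\Mq^{j}\Mt^{-i}$. By definition of the content, $[b]=\Mt^i\Mq^{-j}$, hence $\Mq^{j}\Mt^{-i}=1/[b]$. This gives the stated product.

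There is no real obstacle, only bookkeeping conventions. One should check that Macdonald's $n(\lambda)=\sum_i(i-1)\lambda_i$ agrees with the paper's zero-based $\sum_b\ell'(b)$. With one-based row index $i$ and zero-based coleg, this holds because a box in row $i$ has coleg $i-1$. One should also check that the paper's zero-based coarm and coleg match Macdonald's $a'(s)=j-1$ and $l'(s)=i-1$ in one-based coordinates, so the formula transcribes correctly. The principal specialization in the convention $\epsilon_v(p_r)=(1-v^r)/(1-\Mt^r)$ is exactly Macdonald's $\epsilon_{u,t}$ with $u=v$, so nothing further changes.
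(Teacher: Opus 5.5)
Your proposal is correct and matches the paper. The paper quotes the first formula from Macdonald's (6.17), where the prefactor $\Mt^{n(\lambda)}$ comes from pulling $\Mt^{\ell'(b)}$ out of each numerator $\Mt^{\ell'(b)}-v\Mq^{a'(b)}$; it then gets the integral form exactly as you do, by cancelling the hook factors $h_\lambda(b)$ using $J_\lambda=\prod_b h_\lambda(b)P_\lambda$ and identifying $\Mq^{a'(b)}\Mt^{-\ell'(b)}=1/[b]$, with your convention checks being the only bookkeeping required.
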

Extend \(\epsilon_v\) to a ring homomorphism on \(\La[z^{-1}]\) by
\(\epsilon_v(z^{-1})=1\).

\begin{proposition}[compare {\cite[Proposition~4.3]{MicklerMoll:2023}}]\label{prop:epsPsi}
For every partition $\lambda$ and $s\in\A_\lambda$,
\[
\epsilon_v(\Ps_\lambda^s)
=
\frac{[s]-v}{1-v}\epsilon_v(J_\lambda).
\]
\end{proposition}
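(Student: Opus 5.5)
The plan is to realize $\epsilon_v$ on $\La[z^{-1}]$ as pairing against an explicit vector for $\ip{\cdot}{\cdot}_\infty$, and then use the self-adjointness of $\LL$. Throughout I write $\Omega_v:=H_v/(1-\Mq^{-1}z^{-1})$, expanded as a formal series in $z^{-1}$; this is an auxiliary symbol, not a macro of the paper.

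\emph{Step 1 (representing vectors).} Let $H_v\in\widehat\La$ be the Cauchy-type kernel
\[
H_v=\exp\Bigl(\sum_{n\ge1}\frac{1-v^n}{n(1-\Mq^n)}p_n\Bigr).
\]
Since $\ip{p_n}{p_n}_{\Mq,\Mt}=n(1-\Mq^n)/(1-\Mt^n)$, and $p_n^\perp$ is adjoint to multiplication by $p_n$, we have $\ip{f}{H_v}_{\Mq,\Mt}=\epsilon_v(f)$ for all $f\in\La$. On the full space, $\ip{z^{-m}f}{z^{-m}g}_\infty=\Mq^m\ip{f}{g}_{\Mq,\Mt}$ and $\epsilon_v(z^{-1})=1$. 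Hence the vector
\[
\Omega_v=\sum_{m\ge0}\Mq^{-m}z^{-m}H_v
\]
represents $\epsilon_v$ on all of $\La[z^{-1}]$, while $H_v$ itself, which has no $z$-dependence, represents $\epsilon_v\circ\pi_0$. These pairings are finite in each total degree, so working with such completed vectors is harmless.

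\emph{Step 2 (key identity).} I will show
\[
\LL H_v=(1-v)\,\Omega_v+v\,H_v .
\]
Since $H_v$ has no $z$-dependence, $\mathbf QH_v=\pi_{\le0}(Q(z)H_v)=H_v$, and $\D_{\Mq^{-1}\Mt}H_v=H_v$. The operator $\mathbf Q^\dagger$ is an exponential of derivations, and each $p_n^\perp$ acts on the exponential $H_v$ by multiplication by $(1-v^n)/(1-\Mq^n)$. Therefore
\[
\mathbf Q^\dagger H_v=H_v\exp\Bigl(\sum_n\frac{\Mq^{-n}(1-v^n)}{n}z^{-n}\Bigr)=H_v\,\frac{1-v\Mq^{-1}z^{-1}}{1-\Mq^{-1}z^{-1}}.
\]
Writing $w=\Mq^{-1}z^{-1}$, the identity $(1-vw)/(1-w)=(1-v)/(1-w)+v$ gives the claim.

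\emph{Step 3 (conclusion).} Self-adjointness of $\LL$ for $\ip{\cdot}{\cdot}_\infty$ holds degree by degree, so it applies to these completed vectors. Combining Step 2 with $\LL\Ps_\lambda^s=[s]\Ps_\lambda^s$ gives
\[
[s]\,\epsilon_v(J_\lambda)=\ip{\LL\Ps_\lambda^s}{H_v}_\infty=\ip{\Ps_\lambda^s}{\LL H_v}_\infty=(1-v)\,\epsilon_v(\Ps_\lambda^s)+v\,\epsilon_v(J_\lambda).
\]
Here the left side is $[s]\,\epsilon_v(\pi_0\Ps_\lambda^s)=[s]\,\epsilon_v(J_\lambda)$. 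Rearranging yields the formula in Proposition~\ref{prop:epsPsi}.

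\emph{Main obstacle.} The step I expect to need care is the bookkeeping of adjoints and the $\Mq$-twist in $\ip{\cdot}{\cdot}_\infty$. Specifically, I must check that the twist $f(\Mq^{-1}z)$ in \eqref{def:innerproductinf} produces exactly the factor $\Mq^{-m}$ in $\Omega_v$, and that the same twist matches the factor $\Mq^{-n}$ appearing in $\mathbf Q^\dagger$. If a sign or power of $\Mq$ came out wrong, the geometric series would not recombine into $(1-v)\Omega_v+vH_v$. As a fallback check, I would verify the result against Example~\ref{ex:lowdegree} for $\lambda=(1)$. There, $\epsilon_v(\Ps_{(1)}^{(1,0)})=(1-v)+(\Mt-1)=\Mt-v$, which matches $([s]-v)/(1-v)\cdot(1-v)$ with $[s]=\Mt$.
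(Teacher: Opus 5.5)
Your argument is correct, and it takes a genuinely different, dual route from the paper's. The paper works directly on the vector. Writing $\Ps_\lambda^s=J_\lambda+\psi_\lambda^s$, the block form gives $\Q\psi_\lambda^s=\pi_0(\LL-1)\Ps_\lambda^s=([s]-1)J_\lambda$. Since $\epsilon_v$ is a ring homomorphism with $\epsilon_v(z^{-1})=1$ and $\epsilon_v(Q_m)=1-v$ for $m\ge1$, one has $\epsilon_v\circ\Q=(1-v)\,\epsilon_v$ on $\La^\infty$, and the formula follows in one line.

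Your identity $\LL H_v=(1-v)\Omega_v+vH_v$ is exactly the adjoint of the functional identity $\epsilon_v(\pi_0\LL F)=(1-v)\,\epsilon_v(F)+v\,\epsilon_v(\pi_0F)$. The paper's two ingredients (block form and $\epsilon_v\circ\Q=(1-v)\epsilon_v$) give this identity for every $F\in\La[z^{-1}]$. Your factor $(1-vw)/(1-w)$ is the same generating function as $\epsilon_v(Q(z))=(1-vz)/(1-z)$, seen through the $\Mq$-twist.

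The comparison runs as follows. The paper's version is shorter. It needs no completed vectors and no appeal to the self-adjointness of $\LL$, which the paper asserts without proof. Self-adjointness does hold: $\mathbf Q^\dagger$ is the $\ip{\cdot}{\cdot}_\infty$-adjoint of $\mathbf Q$, and $\D_{\Mq^{-1}\Mt}$ is diagonal. Your version instead packages the mechanism as an explicit statement about the kernels representing $\epsilon_v$ and $\epsilon_v\circ\pi_0$, which fits the inner-product and spectral viewpoint.

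The $\Mq$-bookkeeping you flagged does work out. The factor $\Mq^{m}$ in the pairing cancels the $\Mq^{-m}$ in $\Omega_v$, and with $c_n=(1-v^n)/(n(1-\Mq^n))$ one gets $c_n(\Mq^{-n}-1)=\Mq^{-n}(1-v^n)/n$.

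One small imprecision: $p_n^\perp$ is the adjoint of multiplication by $p_n$ for the Hall product, not for $\ip{\cdot}{\cdot}_{\Mq,\Mt}$, where the adjoint is $\frac{1-\Mq^n}{1-\Mt^n}p_n^\perp$. Your conclusion $\ip{f}{H_v}_{\Mq,\Mt}=\epsilon_v(f)$ is nevertheless correct, as a direct check on $f=p_\lambda$ shows.
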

\begin{proof}
Write $\Ps_\lambda^s=J_\lambda+\psi_\lambda^s$ with
$\psi_\lambda^s=\pi_-\Ps_\lambda^s$.  By the block form (Lemma~\ref{lem:blockform}) and Proposition~\ref{prop:spectral},
\[
\Q\psi_\lambda^s=\pi_0(\LL-1)\Ps_\lambda^s=([s]-1)J_\lambda .
\]
Since $\epsilon_v(Q_m)=1-v$ for $m\ge1$,
$\epsilon_v(\Q\psi_\lambda^s)=(1-v)\epsilon_v(\psi_\lambda^s)$.  Hence
$\epsilon_v(\Ps_\lambda^s)=\epsilon_v(J_\lambda)+\epsilon_v(\psi_\lambda^s)
=\frac{[s]-v}{1-v}\,\epsilon_v(J_\lambda)$.
\end{proof}

\begin{corollary}\label{cor:eps-resolvent}
\[
\epsilon_v\left((u-\LL)^{-1}J_\lambda\right)
=
\frac{(1-v/u)T_\lambda(u)-1}{1-v}\epsilon_v(J_\lambda).
\]
\end{corollary}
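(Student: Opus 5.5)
Expand $J_\lambda$ in the eigenbasis using Proposition~\ref{prop:jackpsiexp}, $J_\lambda=\sum_{s\in\A_\lambda}\tau_\lambda^s\Ps_\lambda^s$. Since each $\Ps_\lambda^s$ is an eigenvector of $\LL$ with eigenvalue $[s]$ (Proposition~\ref{prop:spectral}), the resolvent acts diagonally:
\[
(u-\LL)^{-1}J_\lambda=\sum_{s\in\A_\lambda}\frac{\tau_\lambda^s}{u-[s]}\Ps_\lambda^s .
\]
This is an identity of $\La[z^{-1}]$-valued rational functions of $u$, valid in each fixed total degree, so $\epsilon_v$, extended by $\epsilon_v(z^{-1})=1$, may be applied termwise.

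Next apply $\epsilon_v$ and use Proposition~\ref{prop:epsPsi}, $\epsilon_v(\Ps_\lambda^s)=\frac{[s]-v}{1-v}\epsilon_v(J_\lambda)$. This gives
\[
\epsilon_v\bigl((u-\LL)^{-1}J_\lambda\bigr)=\frac{\epsilon_v(J_\lambda)}{1-v}\sum_{s\in\A_\lambda}\tau_\lambda^s\,\frac{[s]-v}{u-[s]} .
\]
It remains to evaluate the scalar sum $\Sigma(u):=\sum_s\tau_\lambda^s([s]-v)/(u-[s])$ in terms of $T_\lambda(u)$.

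Write $[s]-v=([s]-u)+(u-v)$. The first piece contributes $-\sum_s\tau_\lambda^s=-1$ by Lemma~\ref{lem:kerovmoments}. The second piece contributes
\[
(u-v)\sum_s\frac{\tau_\lambda^s}{u-[s]}=\frac{u-v}{u}\sum_s\frac{\tau_\lambda^s}{1-[s]/u}=(1-v/u)\,T_\lambda(u)
\]
by Definition~\ref{def:kerov}. Hence $\Sigma(u)=(1-v/u)T_\lambda(u)-1$, which is exactly the claimed formula.

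No step is a real obstacle; the only point needing care is the legitimacy of applying $\epsilon_v$, a ring homomorphism on $\La[z^{-1}]$, to the rational-in-$u$ vector. This is fine because in a fixed degree the expansion is a finite sum with scalar rational coefficients. For $\lambda=\varnothing$ the formula reduces to $\epsilon_v((u-\LL)^{-1}1)=1/(u-1)$, which is consistent since $T_\varnothing(u)=1/(1-1/u)$.
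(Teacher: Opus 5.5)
Your proof is correct and follows the paper's argument: expand $J_\lambda$ via \eqref{eq:jackpsiexp}, apply Proposition~\ref{prop:epsPsi} termwise, and evaluate the resulting Kerov sum. The only cosmetic difference is that you split $[s]-v=([s]-u)+(u-v)$ and use $\sum_s\tau_\lambda^s=1$ with Definition~\ref{def:kerov}. The paper instead splits into $[s]$ and $v$ and uses the moment identity $\sum_s\tau_\lambda^s[s]/(u-[s])=T_\lambda(u)-1$ from Lemma~\ref{lem:kerovmoments}.
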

\begin{proof}
Using \eqref{eq:jackpsiexp} and Proposition~\ref{prop:epsPsi},
\[
\epsilon_v\left((u-\LL)^{-1}J_\lambda\right)
=
\sum_{s\in\A_\lambda}
\frac{\tau_\lambda^s}{u-[s]}
\frac{[s]-v}{1-v}\epsilon_v(J_\lambda).
\]
By Lemma~\ref{lem:kerovmoments},
\[
\sum_{s\in\A_\lambda}\frac{\tau_\lambda^s[s]}{u-[s]}
=T_\lambda(u)-1,
\qquad
\sum_{s\in\A_\lambda}\frac{\tau_\lambda^s v}{u-[s]}
=\frac{v}{u}T_\lambda(u).
\]
Substituting these two sums proves the formula.
\end{proof}

The eigenvectors are normalized by $\pi_0\Ps_\lambda^s=J_\lambda$.  Let
\[
\pi_\infty:\La[z^{-1}]\to\QQ(\Mq,\Mt)[z^{-1}]
\]
set every $p_k$ to zero and leave $z^{-1}$ fixed.  Its restriction to $\La$ is
$\epsilon_1$.  For \(\lambda\neq\varnothing\), set
\[
\varpi_\lambda
:= \lim_{v\to1} \frac{\epsilon_v(J_\lambda)}{1-v}=
\Mt^{n(\lambda)}
\prod_{\substack{b\in\lambda\\ b\neq(0,0)}}
\left(1-1/[b]\right) .
\]
For $\lambda\ne\varnothing$ and $s\in\A_\lambda$, the definition gives
\begin{equation}\label{eq:varpi-ratio}
\varpi_{\lambda+s}=\Mq^{a'(s)}([s]-1)\,\varpi_\lambda .
\end{equation}
We also write
\[
\widehat J_\lambda:=\frac{J_\lambda}{\varpi_\lambda}.
\]

\begin{proposition}\label{prop:piinftypsi}

For \(\lambda\neq\varnothing\) and \(s\in\A_\lambda\),
\[
\pi_\infty\Ps_\lambda^s
=
([s]-1)\varpi_\lambda z^{-|\lambda|}.
\]
\end{proposition}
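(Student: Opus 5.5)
Since $\Ps_\lambda^s$ is homogeneous of total degree $|\lambda|$ (with $\deg z^{-1}=1$) and $\pi_\infty$ kills every $p_k$, only the coefficient of $z^{-|\lambda|}$ in $\Ps_\lambda^s$ survives. That coefficient is a scalar $c_\lambda^s\in\QQ(\Mq,\Mt)$, and the claim is that
\[
c_\lambda^s=([s]-1)\varpi_\lambda .
\]
I will extract $c_\lambda^s$ from the principal specialization, Proposition~\ref{prop:epsPsi}, by letting $v\to1$.

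The key observation is that $\epsilon_v$ sends $p_r$ to $(1-v^r)/(1-\Mt^r)$, which vanishes at $v=1$, and sends $z^{-1}$ to $1$. So for $f\in\La[z^{-1}]$, the value $\epsilon_1(f)$ is $\pi_\infty f$ with $z^{-1}$ set to $1$. Moreover, every monomial $z^{-m}p_\mu$ with $\mu\neq\varnothing$ has $\epsilon_v$ vanishing to order at least $\ell(\mu)\ge1$ at $v=1$. Hence
\[
\epsilon_v(\Ps_\lambda^s)=c_\lambda^s+O(1-v)\qquad (v\to1).
\]

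On the other side, Proposition~\ref{prop:epsPsi} gives
\[
\epsilon_v(\Ps_\lambda^s)=([s]-v)\,\frac{\epsilon_v(J_\lambda)}{1-v}.
\]
As $v\to1$, the quotient $\epsilon_v(J_\lambda)/(1-v)$ tends to $\varpi_\lambda$. This limit exists because, for $\lambda\ne\varnothing$, the product formula of Theorem~\ref{thm:principal-spec} contains the factor $1-v/[0,0]=1-v$. The right side therefore tends to $([s]-1)\varpi_\lambda$. Comparing the two limits gives $c_\lambda^s=([s]-1)\varpi_\lambda$, and hence $\pi_\infty\Ps_\lambda^s=([s]-1)\varpi_\lambda z^{-|\lambda|}$.

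The only point requiring care is the justification that $\epsilon_v(f)\to\pi_\infty f|_{z^{-1}=1}$. It follows because $\epsilon_v(p_\mu)$ is a polynomial multiple of $(1-v)^{\ell(\mu)}$. Everything else is immediate from results already established; in particular, homogeneity of $\Ps_\lambda^s$ follows from $\LL$ preserving total degree together with the construction in Proposition~\ref{prop:spectral}.
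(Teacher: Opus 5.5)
Your proposal is correct and is essentially the paper's proof. Homogeneity reduces $\pi_\infty\Ps_\lambda^s$ to $\epsilon_1(\Ps_\lambda^s)\,z^{-|\lambda|}$, and letting $v\to1$ in Proposition~\ref{prop:epsPsi} gives $([s]-1)\varpi_\lambda$. Your observation that $\epsilon_v(p_\mu)$ vanishes to order $\ell(\mu)$ at $v=1$ simply makes explicit the paper's remark that $\pi_\infty$ agrees with $\epsilon_1$.
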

\begin{proof}
Since $\Ps_\lambda^s$ is homogeneous of total degree $|\lambda|$,
$\pi_\infty\Ps_\lambda^s=\epsilon_1(\Ps_\lambda^s)z^{-|\lambda|}$.  By
Proposition~\ref{prop:epsPsi} and the definition of $\varpi_\lambda$,
\[
\epsilon_1(\Ps_\lambda^s)
=\lim_{v\to1}\frac{[s]-v}{1-v}\epsilon_v(J_\lambda)
=([s]-1)\varpi_\lambda .
\]
\end{proof}

\begin{corollary}\label{cor:pi_infty_J}
For $\lambda\ne\varnothing$,
\[
\pi_\infty (1-\LL/u)^{-1}J_\lambda
=
\left(
\frac{A_\lambda(u\Mt^{-1})}{A_\lambda(u)}-1
\right)\varpi_\lambda z^{-|\lambda|}.
\]
\end{corollary}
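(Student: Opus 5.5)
We decompose $J_\lambda$ in the eigenbasis and apply $\pi_\infty$ term by term. By Proposition~\ref{prop:jackpsiexp}, $J_\lambda=\sum_{s\in\A_\lambda}\tau_\lambda^s\Ps_\lambda^s$, and each $\Ps_\lambda^s$ is an eigenvector of $\LL$ with eigenvalue $[s]$. Hence
\[
(1-\LL/u)^{-1}J_\lambda=\sum_{s\in\A_\lambda}\frac{\tau_\lambda^s}{1-[s]/u}\,\Ps_\lambda^s .
\]
Applying $\pi_\infty$ and Proposition~\ref{prop:piinftypsi}, with $\lambda\ne\varnothing$, gives
\[
\pi_\infty(1-\LL/u)^{-1}J_\lambda=\varpi_\lambda z^{-|\lambda|}\sum_{s\in\A_\lambda}\frac{\tau_\lambda^s([s]-1)}{1-[s]/u}.
\]

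It remains to evaluate this scalar sum. Split it as $\sum_s\tau_\lambda^s[s]/(1-[s]/u)-\sum_s\tau_\lambda^s/(1-[s]/u)$. By Lemma~\ref{lem:kerovmoments}, the first sum equals $u(T_\lambda(u)-1)$. By Definition~\ref{def:kerov}, the second sum equals $T_\lambda(u)$. The difference is therefore $(u-1)(T_\lambda(u)-1)-1$.

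Proposition~\ref{cor:transfer-hierarchy} gives $T_\lambda(u)-1=S_\lambda(u)/(u-1)$, where $S_\lambda(u)=A_\lambda(u\Mt^{-1})/A_\lambda(u)$. So the difference becomes $S_\lambda(u)-1$, which is exactly the claimed factor.

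The only point needing care is that the interchange of $\pi_\infty$ with the finite eigen-expansion is legitimate. This holds trivially, since everything lives in a fixed total degree and $\pi_\infty$ is linear. The computation is essentially bookkeeping; the main "obstacle" is just matching normalizations between $(1-\LL/u)^{-1}$ and the Kerov expansion $\sum_s\tau_\lambda^s/(1-[s]/u)$, which the paper has arranged to agree.

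As a sanity check, Corollary~\ref{cor:eps-resolvent} at $v\to1$ gives the same answer. Multiplying that formula by $u$ gives $\epsilon_v((1-\LL/u)^{-1}J_\lambda)=\frac{(u-v)T_\lambda(u)-u}{1-v}\epsilon_v(J_\lambda)$. As $v\to1$, the numerator tends to $(u-1)(T_\lambda(u)-1)-1=S_\lambda(u)-1$, and $\epsilon_v(J_\lambda)/(1-v)\to\varpi_\lambda$, in agreement.
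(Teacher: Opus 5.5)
Your proof is correct and follows the paper's argument: expand $J_\lambda$ by \eqref{eq:jackpsiexp}, apply Proposition~\ref{prop:piinftypsi} to each eigenvector, split the sum and evaluate it with Lemma~\ref{lem:kerovmoments} and Definition~\ref{def:kerov}, then use $T_\lambda(u)=1+S_\lambda(u)/(u-1)$. The only cosmetic difference is that you work with $(1-\LL/u)^{-1}$ throughout, whereas the paper computes with $(u-\LL)^{-1}$ and multiplies by $u$ at the end.
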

\begin{proof}
Using \eqref{eq:jackpsiexp} and Proposition~\ref{prop:piinftypsi},
\[
\pi_\infty (u-\LL)^{-1}J_\lambda
=
\varpi_\lambda
\sum_{s\in\A_\lambda}
\frac{\tau_\lambda^s([s]-1)}{u-[s]}z^{-|\lambda|}.
\]
Now
\[
\sum_{s\in\A_\lambda}
\frac{\tau_\lambda^s([s]-1)}{u-[s]}
=
\sum_{s\in\A_\lambda}
\frac{\tau_\lambda^s[s]}{u-[s]}
-
\sum_{s\in\A_\lambda}
\frac{\tau_\lambda^s}{u-[s]}.
\]
By Lemma~\ref{lem:kerovmoments} and Definition~\ref{def:kerov}, the first sum
is \(T_\lambda(u)-1\) and the second is \(T_\lambda(u)/u\).  Hence the bracket is
\[
\frac{u-1}{u}T_\lambda(u)-1.
\]
Multiplying by $u$ and using $T_\lambda(u)=1+S_\lambda(u)/(u-1)$
(Proposition~\ref{cor:transfer-hierarchy}) gives
\[
(u-1)T_\lambda(u)-u=S_\lambda(u)-1=\frac{A_\lambda(u\Mt^{-1})}{A_\lambda(u)}-1 .
\]
\end{proof}

\begin{definition}[Hatted basis]\label{def:hatpsi}
For \(\lambda\neq\varnothing\) and \(s\in\A_\lambda\), define
\begin{equation}\label{eq:hatpsidef}
\widehat\Ps_\lambda^s
:=
\frac{\Ps_\lambda^s}{([s]-1)\varpi_\lambda}.
\end{equation}
Also set \(\widehat\Ps_\varnothing^{(0,0)}:=1\).
\end{definition}
By Proposition~\ref{prop:piinftypsi},
\[
\pi_\infty\widehat\Ps_\lambda^s=z^{-|\lambda|}
\]
for all \(\lambda\).
For $\lambda\ne\varnothing$,
\[
\pi_0\widehat\Ps_\lambda^s=\frac{\widehat J_\lambda}{[s]-1}.
\]

\subsection{Creation and annihilation}\label{ssec:creation}

Multiplication by $z^{-1}$ raises the total degree by one, and the
\emph{down map}
\[
\downmap:\La[z^{-1}]\to\La[z^{-1}]
\]
lowers it by one.  They satisfy $\downmap z^{-1}=\Id$ and
$z^{-1}\downmap=\Id-\pi_0$.

For $\gamma\ne\varnothing$ and $r\in\R_\gamma$, put
\[
\varphi_\gamma^r:=\sum_{a\in\A_\gamma}\frac{\tau_\gamma^a\,[a]}{[a]-[r^+]}\,\Ps_\gamma^a
=\LL\,(\LL-[r^+])^{-1}J_\gamma ,
\]
the inverse taken on $Z_\gamma$, where $[r^+]$ is not an eigenvalue since
$r^+\notin\A_\gamma$.

\begin{lemma}\label{lem:phivectors}
Let $\gamma\ne\varnothing$.
\begin{enumerate}
\item[(i)] For $r\in\R_\gamma$, $\varphi_\gamma^r\in\La^\infty$ and
$\LL^+\varphi_\gamma^r=[r^+]\,\varphi_\gamma^r$.
\item[(ii)] For $r\in\R_\gamma$, $\Q\varphi_\gamma^r=J_\gamma$ and
$\pi_\infty\varphi_\gamma^r=\varpi_\gamma z^{-|\gamma|}$.
\item[(iii)] $\D_\Mq^{-1}\Q^*J_\gamma
=-\sum_{r\in\R_\gamma}[r^+]\,\tilde\tau_\gamma^{\,r^+}\varphi_\gamma^r$.
\end{enumerate}
\end{lemma}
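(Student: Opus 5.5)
The plan is to work entirely inside the cyclic space $Z_\gamma$ of Proposition~\ref{prop:orthogonal}. There $\LL$ is diagonal with distinct eigenvalues $[a]$, $a\in\A_\gamma$, and $J_\gamma=\sum_a\tau_\gamma^a\Ps_\gamma^a$. Every scalar identity needed should then reduce to one of the Kerov moment identities of Lemma~\ref{lem:kerovmoments}, evaluated at the outer-corner content $c:=[r^+]$. All three parts use the fact that $T_\gamma(c)=1$: by \eqref{eq:Tminus1}, $c$ is a zero of $T_\gamma(u)-1$, and $c$ is not a pole since $r^+\notin\A_\gamma$.

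For (i), first I show that $\varphi_\gamma^r\in\La^\infty$. Since $\pi_0\Ps_\gamma^a=J_\gamma$,
\[
\pi_0\varphi_\gamma^r=\Bigl(\sum_{a\in\A_\gamma}\frac{\tau_\gamma^a[a]}{[a]-c}\Bigr)J_\gamma .
\]
Evaluating the third identity of Lemma~\ref{lem:kerovmoments} at $u=c$ gives $\sum_a\tau_\gamma^a[a]\,c/(c-[a])=c(T_\gamma(c)-1)=0$, so this coefficient vanishes. Next I rewrite the defining relation as $(\LL-c)\varphi_\gamma^r=\LL J_\gamma$ and read off both block components using Lemma~\ref{lem:blockform}. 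On the right, $\LL J_\gamma=J_\gamma+\D_\Mq^{-1}\Q^*J_\gamma$. On the left, since $\varphi_\gamma^r\in\La^\infty$:
\begin{itemize}
\item the $\La$-component is $\Q\varphi_\gamma^r$, so $\Q\varphi_\gamma^r=J_\gamma$;
\item the $\La^\infty$-component is $\D_\Mq^{-1}\Q^*\Q\varphi_\gamma^r+\LL^+\varphi_\gamma^r-c\,\varphi_\gamma^r$.
\end{itemize}
Here I use that the lower-right block of $\LL$ is $\D_\Mq^{-1}\Q^*\Q+\LL^+$. Substituting $\Q\varphi_\gamma^r=J_\gamma$ into the second component cancels the $\D_\Mq^{-1}\Q^*J_\gamma$ terms and leaves $\LL^+\varphi_\gamma^r=c\,\varphi_\gamma^r$. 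This gives (i) and the first half of (ii) together.

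For the second half of (ii), I apply Proposition~\ref{prop:piinftypsi} termwise, which reduces the claim to
\[
\sum_a\tau_\gamma^a[a]\,\frac{[a]-1}{[a]-c}=1 .
\]
To prove this I write $([a]-1)/([a]-c)=1+(c-1)/([a]-c)$. The first resulting sum is $\sum_a[a]\tau_\gamma^a=1$ by Lemma~\ref{lem:kerovmoments}. The second sum is $(c-1)$ times the vanishing coefficient already found in (i).

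For (iii), the block form gives $\D_\Mq^{-1}\Q^*J_\gamma=(\LL-1)J_\gamma$, which lies in $Z_\gamma$. Because $\LL$ acts diagonally on $Z_\gamma$, it suffices to check a scalar identity at each eigenvalue $x=[a]$, $a\in\A_\gamma$:
\[
x-1=-\sum_{r\in\R_\gamma}[r^+]\,\tilde\tau_\gamma^{\,r^+}\frac{x}{x-[r^+]}.
\]
Multiplying \eqref{eq:pfvalues} by $[s]$ gives exactly this identity at $x=[s]$. Applying both sides to $J_\gamma$ and using $\varphi_\gamma^r=\LL(\LL-[r^+])^{-1}J_\gamma$ then yields (iii).

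The main obstacle I expect is in (i): justifying that the block-form components may be read off. This needs $\varphi_\gamma^r\in\La^\infty$ to be established first, and it needs the identification of the lower-right block with $\D_\Mq^{-1}\Q^*\Q+\LL^+$. The rest is bookkeeping with Kerov's identities.
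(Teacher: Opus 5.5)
Your proof is correct and follows the paper's argument. It uses $T_\gamma([r^+])=1$ to get $\pi_0\varphi_\gamma^r=0$, the block form to obtain $\Q\varphi_\gamma^r=J_\gamma$ and $\LL^+\varphi_\gamma^r=[r^+]\varphi_\gamma^r$, the same Kerov-moment splitting for $\pi_\infty$, and \eqref{eq:pfvalues} at each eigenvalue for (iii). The one small difference is that you read $\Q\varphi_\gamma^r=J_\gamma$ directly off the $\La$-component of $(\LL-[r^+])\varphi_\gamma^r=\LL J_\gamma$, whereas the paper checks the scalar identity $\sum_a\tau_\gamma^a[a]^2/([a]-[r^+])=1$.
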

\begin{proof}
Fix $r$ and put $c_a=\tau_\gamma^a[a]/([a]-[r^+])$,
so that $\varphi_\gamma^r=\sum_ac_a\Ps_\gamma^a$.  By \eqref{eq:Tminus1},
$T_\gamma([r^+])=1$, so Lemma~\ref{lem:kerovmoments} gives
$\sum_ac_a=1-T_\gamma([r^+])=0$.  Hence
$\pi_0\varphi_\gamma^r=(\sum_ac_a)J_\gamma=0$ and $\varphi_\gamma^r\in\La^\infty$.

For $F\in\La^\infty$ the block form (Lemma~\ref{lem:blockform}) gives
$\pi_0\LL F=\Q F$ and $\LL^+F=\pi_-\LL F-\D_\Mq^{-1}\Q^*\Q F$; it also gives
$\LL J_\gamma=J_\gamma+\D_\Mq^{-1}\Q^*J_\gamma$.  Take $F=\varphi_\gamma^r$.  Then
$\LL F=\sum_ac_a[a]\Ps_\gamma^a$, so $\Q F=\bigl(\sum_ac_a[a]\bigr)J_\gamma=J_\gamma$, since
\[
\sum_ac_a[a]=\sum_a\tau_\gamma^a[a]+[r^+]\sum_a\frac{\tau_\gamma^a[a]}{[a]-[r^+]}
=1+[r^+]\bigl(1-T_\gamma([r^+])\bigr)=1 .
\]
Hence, by \eqref{eq:jackpsiexp},
\[
\LL^+F=\LL F-J_\gamma-\D_\Mq^{-1}\Q^*J_\gamma=\LL F-\LL J_\gamma
=\sum_a(c_a-\tau_\gamma^a)[a]\,\Ps_\gamma^a=[r^+]\,F,
\]
since $(c_a-\tau_\gamma^a)[a]=[r^+]\,c_a$.  This proves (i) and the first part
of (ii).  For the second part, Proposition~\ref{prop:piinftypsi} gives
$\pi_\infty\varphi_\gamma^r=\varpi_\gamma z^{-|\gamma|}\sum_ac_a([a]-1)$, and
\[
\sum_ac_a([a]-1)=\sum_a\tau_\gamma^a[a]+([r^+]-1)\sum_a\frac{\tau_\gamma^a[a]}{[a]-[r^+]}
=1+([r^+]-1)\bigl(1-T_\gamma([r^+])\bigr)=1 .
\]
For (iii), \eqref{eq:jackpsiexp} and the block form give
$\D_\Mq^{-1}\Q^*J_\gamma=\LL J_\gamma-J_\gamma
=\sum_a\tau_\gamma^a([a]-1)\Ps_\gamma^a$.  On the right of (iii) the coefficient
of $\Ps_\gamma^a$ is
$\tau_\gamma^a[a]\sum_r[r^+]\tilde\tau_\gamma^{\,r^+}/([r^+]-[a])
=\tau_\gamma^a[a](1-1/[a])=\tau_\gamma^a([a]-1)$, by \eqref{eq:pfvalues}
with $a$ in place of $s$.
\end{proof}

\begin{proposition}[compare {\cite[(94)]{Mickler:2023}}]\label{lem:zcreation}
For a partition $\lambda$ and $s\in\A_\lambda$,
\begin{equation}\label{eq:zcreation}
z^{-1}\Ps_\lambda^s
=
\Mq^{-a'(s)}\sum_{r\in\A_{\lambda+s}}
\frac{\tau_{\lambda+s}^r}{1-[s^+]/[r]}\,\Ps_{\lambda+s}^r
=\Mq^{-a'(s)}\varphi_{\lambda+s}^s\in Z_{\lambda+s},
\end{equation}
where $[s^+]=[s]\,\Mt/\Mq$.  In the hatted basis this reads
\begin{equation}\label{eq:hat-zcreation}
z^{-1}\widehat\Ps_\lambda^s
=
\sum_{d\in\A_{\lambda+s}}
\frac{\tau_{\lambda+s}^d([d]-1)}{1-[s^+]/[d]}
\widehat\Ps_{\lambda+s}^{\,d},
\end{equation}
and the coefficients in \eqref{eq:hat-zcreation} sum to $1$.
\end{proposition}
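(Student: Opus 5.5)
Set $\gamma=\lambda+s$. The box $s$ is then a removable corner of $\gamma$, so $s\in\R_\gamma$. The strategy is to show that $z^{-1}\Ps_\lambda^s$ and $\Mq^{-a'(s)}\varphi_\gamma^s$ are both eigenvectors of $\LL^+$ on $\La^\infty$ for the eigenvalue $[s^+]$, lying in the same eigenspace, and then to fix the scalar by comparing $\pi_\infty$.

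\textbf{Step 1: both sides are $[s^+]$-eigenvectors of $\LL^+$.}
\begin{itemize}
\item By Corollary~\ref{cor:NSspectral}, applied with $\gamma$ in place of $\lambda$ and $r=s$, we have $\LL^+z^{-1}\Ps_\lambda^s=[s^+]z^{-1}\Ps_\lambda^s$.
\item By Lemma~\ref{lem:phivectors}(i), $\varphi_\gamma^s$ is also an $[s^+]$-eigenvector of $\LL^+$.
\end{itemize}

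\textbf{Step 2: the common eigenspace is one-dimensional.} This is the main obstacle, since eigenvalue equality alone is not enough: $[s^+]$ may be an outer-corner content for several partitions of size $|\gamma|$. I would use the orthogonal decomposition of $\La^\infty_N$ from Corollary~\ref{cor:NSspectral} together with the cyclic spaces $Z_\gamma$.

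First, $\varphi_\gamma^s\in Z_\gamma$. Next, I claim $z^{-1}\Ps_\lambda^s\in Z_\gamma$ as well. To see this, write
\[
\LL\,z^{-1}\Ps_\lambda^s \;=\; \Mq^{-1}\Mt\, z^{-1}\LL^+\,\Ps_\lambda^s\ \text{(schematically)}.
\]
More cleanly, use Lemma~\ref{lem:shift} to obtain $\LL z^{-1}=(\Mq/\Mt)\,z^{-1}\LL^+$ in a suitable sense. Rather than relying on that, I would pair against the other $Z_\mu$. For $\mu\ne\gamma$, compute
\[
\langle z^{-1}\Ps_\lambda^s,\,g(\LL)J_\mu\rangle_\infty .
\]
By self-adjointness of $\LL^+$, together with the block form giving $\LL F=\LL^+F+\D_\Mq^{-1}\Q^*\Q F+\Q F$, this reduces to the pairing $\langle \Q(z^{-1}\Ps_\lambda^s),\cdot\rangle$. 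Now $\Q(z^{-1}\Ps_\lambda^s)$ is $\pi_0$ of $Q(z)z^{-1}\Ps_\lambda^s$, whose $\Lambda$-component is forced to be a multiple of $J_\gamma$ by triangularity.

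Simpler still, I expect the following to suffice. The $[s^+]$-eigenspace of $\LL^+$ in degree $N$ is spanned by those $z^{-1}\Ps_{\mu-r}^r$ with $[r^+]=[s^+]$, that is with $r=s$ as a box. Each such vector satisfies $\Q(z^{-1}\Ps_{\mu-r}^r)=\pi_0\LL\Ps_{\mu-r}^r$, computed through the block form. Comparing with Lemma~\ref{lem:phivectors}(ii), where $\Q\varphi_\gamma^s=J_\gamma$, and using that $\Q$ is injective on each $\LL^+$-eigenspace (it maps eigenvectors to distinct $J_\mu$ by Lemma~\ref{lem:phivectors}(ii) applied to all $\mu$), identifies $z^{-1}\Ps_\lambda^s$ with a multiple of $\varphi_\gamma^s$ once I know that $\Q z^{-1}\Ps_\lambda^s\in\QQ(\Mq,\Mt)J_\gamma$. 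To establish that containment, I would use Lemma~\ref{lem:phivectors}(ii) for $\gamma$ and for the other $\mu$, together with the decomposition of Lemma~\ref{lem:phivectors}(iii), to deduce that the $\varphi_\mu^r$ span $\La^\infty_N$. A count gives $\sum_\mu|\R_\mu|$ such vectors, which matches the dimension. Hence $z^{-1}\Ps_\lambda^s=c\,\varphi_\gamma^s$ for some scalar $c$, the vector $\varphi_\gamma^s$ being the unique element of its eigenspace with $\Q$-image $J_\gamma$ in the direction of $J_\gamma$. Pinning down which $\mu$ actually occurs still requires an independent triangularity input. For this I would use that $\pi_0$ of $z\cdot$ and the leading Macdonald term match, namely $z^{-1}\Ps_\lambda^s$ has top $z^{-1}J_\lambda$, and that the Pieri expansion of $J_\lambda$ times the first coefficient involves $\gamma$.

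\textbf{Step 3: fix the scalar with $\pi_\infty$.} By Proposition~\ref{prop:piinftypsi},
\[
\pi_\infty z^{-1}\Ps_\lambda^s=([s]-1)\varpi_\lambda z^{-|\gamma|}.
\]
On the other side, Lemma~\ref{lem:phivectors}(ii) gives $\pi_\infty\varphi_\gamma^s=\varpi_\gamma z^{-|\gamma|}$. By \eqref{eq:varpi-ratio}, $\varpi_\gamma=\Mq^{a'(s)}([s]-1)\varpi_\lambda$. Comparing the two therefore gives $c=\Mq^{-a'(s)}$. The explicit sum in \eqref{eq:zcreation} is then just the definition of $\varphi_\gamma^s$, after rewriting
\[
\frac{[a]}{[a]-[s^+]}=\frac{1}{1-[s^+]/[a]} .
\]

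\textbf{Step 4: hatted form and coefficient sum.} Dividing \eqref{eq:zcreation} by $([s]-1)\varpi_\lambda$ and substituting $\Ps_\gamma^d=([d]-1)\varpi_\gamma\widehat\Ps_\gamma^d$, the prefactor becomes
\[
\Mq^{-a'(s)}\varpi_\gamma/\bigl(([s]-1)\varpi_\lambda\bigr)=1,
\]
which yields \eqref{eq:hat-zcreation}. Finally, applying $\pi_\infty$ to \eqref{eq:hat-zcreation} and using $\pi_\infty\widehat\Ps=z^{-|\cdot|}$ on both sides shows that the coefficients sum to $1$.
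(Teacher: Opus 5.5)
Your Steps 1, 3 and 4 are fine, and Step 3 fixes the scalar exactly as the paper does. The gap is Step 2, which is where the whole content of the proposition sits.

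The $[s^+]$-eigenspace of $\LL^+$ in degree $|\lambda|+1$ is spanned by $z^{-1}\Ps_\mu^s$ for \emph{all} $\mu\vdash|\lambda|$ with $s\in\A_\mu$. Your observation that $\Q$ is injective on this eigenspace is correct. So you have reduced the claim to $\Q(z^{-1}\Ps_\lambda^s)\in\QQ(\Mq,\Mt)J_{\lambda+s}$. But that containment is essentially the proposition itself; the paper only obtains it afterwards, as Lemma~\ref{lem:hat-projections}. None of your suggested inputs proves it:
\begin{itemize}
\item The dimension count shows the $\varphi_\mu^r$ form a basis. It does not say which basis vector $z^{-1}\Ps_\lambda^s$ is proportional to.
\item The leading-term argument fails. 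Write $\Ps_\lambda^s=J_\lambda+\sum_{k\ge1}z^{-k}\psi_k$. Then $\Q(z^{-1}\Ps_\lambda^s)=Q_1J_\lambda+\sum_{k\ge1}Q_{k+1}\psi_k$. By Pieri, $Q_1J_\lambda=J_{(1)}J_\lambda$ has nonzero components along $J_{\lambda+a}$ for \emph{every} $a\in\A_\lambda$. Killing the components with $a\ne s$ needs an exact cancellation against the lower terms $\psi_k$, which no triangularity argument can detect.
\item Your pairing against $g(\LL)J_\mu$ just returns the same unknown $\Q(z^{-1}\Ps_\lambda^s)$.
\item Moving $z^{-1}$ across the pairing to prove $z^{-1}\Ps_\lambda^s\perp Z_\mu$ would need the down-map formula. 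The paper deduces that formula from this very proposition.
\end{itemize}
Two smaller slips. First, $\Q(z^{-1}\Ps^r_{\mu-r})=\pi_0\LL(z^{-1}\Ps^r_{\mu-r})$, not $\pi_0\LL\Ps^r_{\mu-r}$. Second, Lemma~\ref{lem:shift} gives $\LL^+z^{-1}=(\Mt/\Mq)z^{-1}\LL$; it gives no formula for $\LL z^{-1}$.

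The missing idea is an induction on $|\lambda|$ that produces a linear relation isolating one line in that eigenspace.
\begin{itemize}
\item Apply $\LL$ to the Pieri expansion $J_{(1)}J_\lambda=\sum_{a\in\A_\lambda}g^aJ_{\lambda+a}$. By Lemma~\ref{lem:pi0homo}, $\LL$ is multiplicative against the factor $J_{(1)}$, and $\LL J_{(1)}=J_{(1)}+cz^{-1}$. Subtracting $J_{(1)}J_\lambda$ gives \eqref{eq:creation-pieri}.
\item Project onto the $[s^+]$-eigenspace of $\LL^+$. On the left, Lemma~\ref{lem:phivectors}(iii) leaves only $r=s$, and $s\in\R_{\lambda+a}$ forces $a=s$. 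This gives a nonzero multiple of $\varphi_{\lambda+s}^s$.
\item On the right, $c\,z^{-1}\LL J_\lambda$ projects to a multiple of $z^{-1}\Ps_\lambda^s$.
\item For the remaining term, rewrite $J_{(1)}\D_\Mq^{-1}\Q^*J_\lambda$ using Lemma~\ref{lem:phivectors}(iii) and the inductive hypothesis $\varphi_\lambda^r=\Mq^{a'(r)}z^{-1}\Ps^r_{\lambda-r}$. Then use three facts: the relation $(\LL-[r])(J_{(1)}\Ps^r_{\lambda-r})=c[r]\Mq^{-a'(r)}\varphi_\lambda^r$; that $\varphi_\lambda^r\in Z_\lambda$; and that $z^{-1}$ intertwines the $[s]$-eigenprojection of $\LL$ with the $[s^+]$-eigenprojection of $\LL^+$. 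Together these show this term also projects to a multiple of $z^{-1}\Ps_\lambda^s$.
\end{itemize}
Hence $\varphi_{\lambda+s}^s\in\QQ(\Mq,\Mt)\,z^{-1}\Ps_\lambda^s$. Only at that point does your $\pi_\infty$ normalization apply.
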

\begin{proof}
By induction on $|\lambda|$, following \cite[Theorem~3.4]{Mickler:2023} with
Lemma~\ref{lem:pi0homo} in place of the derivation property used there, we
show that $z^{-1}\Ps_\lambda^s=\Mq^{-a'(s)}\varphi_{\lambda+s}^s$.

If $\lambda=\varnothing$ then $s=(0,0)$ and $\Ps_\varnothing^{(0,0)}=1$.  The
space $\La^\infty_1$ is spanned by $z^{-1}$, and $\Q z^{-1}=Q_1=(1-\Mt)p_1=J_{(1)}$,
so Lemma~\ref{lem:phivectors}(ii) forces $\varphi_{(1)}^{(0,0)}=z^{-1}$.

Now let $|\lambda|=n\ge1$ and assume the statement for all partitions of size
$n-1$.  By Corollary~\ref{cor:NSspectral}, $\LL^+$ is diagonalizable on
$\La^\infty_{n+1}$, with eigenvectors $z^{-1}\Ps_\mu^a$ ($\mu\vdash n$,
$a\in\A_\mu$) of eigenvalue $[a^+]$.  Let $P$ be the
projection of $\La^\infty_{n+1}$ onto the $[s^+]$-eigenspace of $\LL^+$
along the other eigenspaces, and let $P'$ be the projection of
$\La[z^{-1}]_n$ onto the $[s]$-eigenspace of $\LL$.  Distinct boxes have
distinct contents, so $[a^+]=[s^+]$ only when $a=s$.  Hence $z^{-1}$
maps the $[s]$-eigenspace of $\LL$ onto the $[s^+]$-eigenspace of
$\LL^+$, and the sum of the other eigenspaces of $\LL$ onto the sum of the
other eigenspaces of $\LL^+$, so $Pz^{-1}=z^{-1}P'$.

By the Pieri rule \cite[Chapter~VI, (6.24)(iv) and (6.23)]{Macdonald:1995},
together with $J_\mu=\prod_{b\in\mu}h_\mu(b)\,P_\mu$ \cite[Chapter~VI, (8.3)]{Macdonald:1995},
\[
J_{(1)}J_\lambda=\sum_{a\in\A_\lambda}g^aJ_{\lambda+a},
\qquad g^a\ne0 .
\]
Apply $\LL$.  By Lemma~\ref{lem:pi0homo} the left side becomes
$\LL(J_{(1)})\LL(J_\lambda)$.  The block form gives
$\LL J_\gamma=J_\gamma+\D_\Mq^{-1}\Q^*J_\gamma$ for every $\gamma$, and in
particular $\LL J_{(1)}=J_{(1)}+cz^{-1}$ with $c=(1-\Mt)(1-\Mq)/\Mq$.  Subtracting
$J_{(1)}J_\lambda$ from both sides gives
\begin{equation}\label{eq:creation-pieri}
\sum_{a\in\A_\lambda}g^a\,\D_\Mq^{-1}\Q^*J_{\lambda+a}
=J_{(1)}\,\D_\Mq^{-1}\Q^*J_\lambda+c\,z^{-1}\LL J_\lambda .
\end{equation}
We apply $P$ to the three terms.

By Lemma~\ref{lem:phivectors}(i) and (iii), $\D_\Mq^{-1}\Q^*J_{\lambda+a}$ is a
combination of eigenvectors $\varphi_{\lambda+a}^r$ with eigenvalues
$[r^+]$, $r\in\R_{\lambda+a}$.  Such an eigenvalue equals $[s^+]$ only
if $r=s$, and $s\in\R_{\lambda+a}$ only if $a=s$.  So the left side of
\eqref{eq:creation-pieri} projects to
$-g^s[s^+]\,\tilde\tau_{\lambda+s}^{\,s^+}\varphi_{\lambda+s}^s$, and this
coefficient is nonzero: no factor of $\tilde\tau_{\lambda+s}^{\,s^+}$
vanishes, because $s^+\notin\A_{\lambda+s}$.

By \eqref{eq:jackpsiexp},
$c\,z^{-1}\LL J_\lambda=c\sum_a\tau_\lambda^a[a]\,z^{-1}\Ps_\lambda^a$ projects to
$c\,\tau_\lambda^s[s]\,z^{-1}\Ps_\lambda^s$.

For the remaining term, Lemma~\ref{lem:phivectors}(iii) for $\lambda$ and the
induction hypothesis $\varphi_\lambda^r=\Mq^{a'(r)}z^{-1}\Ps_{\lambda-r}^r$ give
\[
J_{(1)}\,\D_\Mq^{-1}\Q^*J_\lambda
=-\sum_{r\in\R_\lambda}[r^+]\,\tilde\tau_\lambda^{\,r^+}\Mq^{a'(r)}\,
z^{-1}\bigl(J_{(1)}\Ps_{\lambda-r}^r\bigr).
\]
By Lemma~\ref{lem:pi0homo},
$\LL(J_{(1)}\Ps_{\lambda-r}^r)=(J_{(1)}+cz^{-1})[r]\Ps_{\lambda-r}^r$, that is,
\[
(\LL-[r])\bigl(J_{(1)}\Ps_{\lambda-r}^r\bigr)=c[r]\,z^{-1}\Ps_{\lambda-r}^r
=c[r]\,\Mq^{-a'(r)}\varphi_\lambda^r .
\]
Apply $P'$, which commutes with $\LL$.  Since $\varphi_\lambda^r\in Z_\lambda$
and the addable corners of $\lambda$ have distinct contents, $P'$ keeps only
the $\Ps_\lambda^s$-component of $\varphi_\lambda^r$, and the definition of
$\varphi_\lambda^r$ gives
$P'\varphi_\lambda^r=\tau_\lambda^s[s]\,\Ps_\lambda^s/([s]-[r^+])$; since
$[s]\ne[r]$ and $[s]\ne[r^+]$,
\[
P'\bigl(J_{(1)}\Ps_{\lambda-r}^r\bigr)
=\frac{c[r]\,\Mq^{-a'(r)}\tau_\lambda^s[s]}{([s]-[r])([s]-[r^+])}\,\Ps_\lambda^s .
\]
Thus both terms on the right of \eqref{eq:creation-pieri} project to multiples
of $z^{-1}\Ps_\lambda^s$, so $\varphi_{\lambda+s}^s$ is a multiple of
$z^{-1}\Ps_\lambda^s$.  Since $\Q\varphi_{\lambda+s}^s=J_{\lambda+s}\ne0$, the
multiple is nonzero, and $z^{-1}\Ps_\lambda^s$ is a multiple of
$\varphi_{\lambda+s}^s$.  Applying $\pi_\infty$, Proposition~\ref{prop:piinftypsi}
and Lemma~\ref{lem:phivectors}(ii) identify the multiple as
$([s]-1)\varpi_\lambda/\varpi_{\lambda+s}=\Mq^{-a'(s)}$, by
\eqref{eq:varpi-ratio}.

For \eqref{eq:hat-zcreation}, let first $\lambda\ne\varnothing$.  The
coefficient in \eqref{eq:zcreation} is multiplied by
\[
\frac{([d]-1)\varpi_{\lambda+s}}{([s]-1)\varpi_\lambda},
\qquad
\Mq^{-a'(s)}\frac{\varpi_{\lambda+s}}{([s]-1)\varpi_\lambda}=1,
\]
by \eqref{eq:varpi-ratio}.  For
$\lambda=\varnothing$ we have $\widehat\Ps_\varnothing^{(0,0)}=\Ps_\varnothing^{(0,0)}=1$
and $a'(s)=0$, and the multiplier is $([d]-1)\varpi_{(1)}=[d]-1$, which gives the
same formula.  Applying $\pi_\infty$ to \eqref{eq:hat-zcreation} and using
$\pi_\infty\widehat\Ps_\gamma^c=z^{-|\gamma|}$ (Definition~\ref{def:hatpsi})
shows that the coefficients sum to $1$.
\end{proof}

\begin{corollary}[compare {\cite[(117)]{Mickler:2023}}]\label{lem:pidown}
For a partition $\lambda$ and an addable corner $s\in\A_\lambda$,
\begin{equation}\label{eq:pidown}
\downmap\Ps_\lambda^s
=
\sum_{r\in\R_\lambda}
\Mq^{a'(r)}\,[s]\,\frac{\tilde\tau_\lambda^{\,r^+}}{1-[s]/[r^+]}\,\Ps_{\lambda-r}^{\,r}
\in X_\lambda,
\end{equation}
where $X_\lambda:=\operatorname{span}\{\Ps_{\lambda-r}^r:r\in\R_\lambda\}$.  For
$\lambda\ne\varnothing$, in the hatted basis this reads
\begin{equation}\label{eq:hat-pidown}
\downmap\widehat\Ps_\lambda^s
=
\frac{[s]}{[s]-1}\sum_{r\in\R_\lambda}
\frac{\tilde\tau_\lambda^{\,r^+}}{1-[s]/[r^+]}
\widehat\Ps_{\lambda-r}^{\,r},
\end{equation}
and the coefficients in \eqref{eq:hat-pidown} sum to $1$.
\end{corollary}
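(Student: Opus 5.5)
The down map is the adjoint (up to a scalar) of multiplication by $z^{-1}$, so the plan is to read off \eqref{eq:pidown} from the creation formula \eqref{eq:zcreation} by orthogonality, rather than recomputing anything.

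\textbf{Step 1: $\downmap\Ps_\lambda^s$ lies in $X_\lambda$.} Since $\downmap z^{-1}=\Id$ on $\La[z^{-1}]$, it is enough to show that $z^{-1}\downmap\Ps_\lambda^s=\Ps_\lambda^s-J_\lambda$ lies in $z^{-1}X_\lambda$. By Proposition~\ref{lem:zcreation}, $z^{-1}\Ps_{\lambda-r}^r=\Mq^{-a'(r)}\varphi_\lambda^r$. These vectors, for $r\in\R_\lambda$, are eigenvectors of $\LL^+$ with the distinct eigenvalues $[r^+]$ (Lemma~\ref{lem:phivectors}), so they are independent. They lie in $Z_\lambda\cap\La^\infty$, which has dimension $|\A_\lambda|-1=|\R_\lambda|$, because $\pi_0$ maps $Z_\lambda$ onto $\QQ(\Mq,\Mt)J_\lambda$. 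Hence they span $Z_\lambda\cap\La^\infty$. Finally, $\Ps_\lambda^s-J_\lambda$ lies in $Z_\lambda\cap\La^\infty$, since both $\Ps_\lambda^s$ and $J_\lambda$ lie in $Z_\lambda$ (Propositions~\ref{prop:orthogonal} and~\ref{prop:jackpsiexp}).

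\textbf{Step 2: the coefficients.} Write $\Ps_\lambda^s-J_\lambda=\sum_r c_r\varphi_\lambda^r$. Then
\[
\downmap\Ps_\lambda^s=\sum_r c_r\Mq^{-a'(r)}\Ps_{\lambda-r}^r .
\]
Substitute $\Ps_\lambda^s=\sum_a\delta_{as}\Ps_\lambda^a$ and $J_\lambda=\sum_a\tau_\lambda^a\Ps_\lambda^a$, and expand $\varphi_\lambda^r=\sum_a\frac{\tau_\lambda^a[a]}{[a]-[r^+]}\Ps_\lambda^a$. Comparing coefficients of $\Ps_\lambda^a$ gives a Cauchy-type linear system in the $c_r$. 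The natural guess, modelled on \eqref{eq:pfvalues} and Lemma~\ref{lem:phivectors}(iii), is
\[
c_r=\frac{[r^+]\,\tilde\tau_\lambda^{r^+}[s]}{[r^+]-[s]} .
\]
To verify it, compute the $\Ps_\lambda^a$-coefficient of $\sum_r c_r\varphi_\lambda^r$ as $\tau_\lambda^a[a][s]\sum_r\frac{[r^+]\tilde\tau^{r^+}}{([a]-[r^+])([r^+]-[s])}$. Split this by partial fractions in $[r^+]$ and apply the fourth identity of Lemma~\ref{lem:kerovmoments}, namely $G(u):=\sum_r\frac{[r^+]\tilde\tau^{r^+}}{u-[r^+]}=\frac{1}{u(T_\lambda(u)-1)}-1+\frac1u$.

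\begin{itemize}
\item For $a\ne s$, the sum equals $\frac{G([a])-G([s])}{[s]-[a]}$ up to sign. Since $T_\lambda$ has poles at both $[a]$ and $[s]$, we have $G([a])=-1+1/[a]$ and $G([s])=-1+1/[s]$, and the coefficient should reduce to $-\tau_\lambda^a$.
\item For $a=s$, the sum involves $G'([s])$. Because $1/(T_\lambda-1)$ has a simple zero at $[s]$, the derivative picks up the residue $\tau_\lambda^s[s]$ of $T_\lambda$, and the coefficient should become $1-\tau_\lambda^s$.
\end{itemize}

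Multiplying $c_r$ by $\Mq^{-a'(r)}$ then yields \eqref{eq:pidown}, modulo the exact power of $\Mq$. This power must be checked against the factor $\Mq$ in $\ip{z^{-1}F}{z^{-1}G}_\infty=\Mq\ip{F}{G}_\infty$, or more simply against Example~\ref{ex:lowdegree} at $\lambda=(1)$. If the powers come out as $\Mq^{+a'(r)}$, as in the statement, then the bookkeeping of $\varphi$ versus $z^{-1}\Ps$ in Step 1 has to be redone.

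\textbf{Step 3: hatted form.} Rescale by $\widehat\Ps=\Ps/(([\cdot]-1)\varpi)$ and use \eqref{eq:varpi-ratio} in the form $\varpi_\lambda=\Mq^{a'(r)}([r]-1)\varpi_{\lambda-r}$; this cancels $\Mq^{a'(r)}$ and gives \eqref{eq:hat-pidown}. The coefficients sum to $1$ because $\pi_\infty$ commutes with $\downmap$ on the relevant degrees and $\pi_\infty\widehat\Ps=z^{-|\cdot|}$. The same sum can be cross-checked against \eqref{eq:pfvalues}.

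\textbf{Main obstacle.} The hardest part is the diagonal case $a=s$ in Step 2, the derivative/residue evaluation. If it proves messy, I would switch to an adjoint argument instead: pair $\downmap\Ps_\lambda^s$ against $\Ps_{\lambda-r}^r$, move $z$ across the inner product as $z^{-1}$ with the $\Mq$ factor, and apply \eqref{eq:zcreation} together with Lemma~\ref{lem:norm}. This reduces each coefficient to a ratio of norms times a single creation coefficient, $\tau$ over a difference of contents, and the hook products should collapse to $\tilde\tau_\lambda^{r^+}$.
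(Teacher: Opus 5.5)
Your argument is correct in substance, but it reaches the coefficients by a different route from the paper.

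\textbf{How the paper gets the coefficients.} The paper never guesses $c_r$. By the definition of $\Ps_\lambda^s$, the vector $\pi_-\Ps_\lambda^s$ is the value at $u=[s]$ of $u(u-\LL^+)^{-1}\D_\Mq^{-1}\Q^*J_\lambda$. Lemma~\ref{lem:phivectors}(iii) already expands $\D_\Mq^{-1}\Q^*J_\lambda$ in the $\LL^+$-eigenvectors $\varphi_\lambda^r$. The resolvent therefore just multiplies the $r$-th term by $u/(u-[r^+])$, and evaluating at $u=[s]$ gives $c_r=[s][r^+]\tilde\tau_\lambda^{\,r^+}/([r^+]-[s])$ immediately. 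The partial-fraction work was done once, in \eqref{eq:pfvalues}.

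\textbf{How your route differs.} You verify $\Ps_\lambda^s-J_\lambda=\sum_r c_r\varphi_\lambda^r$ coefficient by coefficient in the basis $\{\Ps_\lambda^a\}$ of $Z_\lambda$. This uses only the Kerov structure, not the resolvent construction of $\Ps_\lambda^s$. Both of your cases close:
\begin{itemize}
\item For $a\ne s$ the inner sum is $(G([a])-G([s]))/([a]-[s])=-1/([a][s])$, which gives the coefficient $-\tau_\lambda^a$.
\item For $a=s$ the inner sum is $G'([s])=1/([s]^2\tau_\lambda^s)-1/[s]^2$, using $\operatorname*{Res}_{u=[s]}T_\lambda=[s]\tau_\lambda^s$. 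This gives $1-\tau_\lambda^s$.
\end{itemize}
Because this is an identity between explicit vectors of $Z_\lambda$, your Step~1 (the dimension count) is not needed. The price of your route is the diagonal derivative computation, which the paper avoids entirely.

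\textbf{Two corrections.}

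First, the power of $\Mq$. Inverting $z^{-1}\Ps_{\lambda-r}^r=\Mq^{-a'(r)}\varphi_\lambda^r$ gives $z\varphi_\lambda^r=\Mq^{a'(r)}\Ps_{\lambda-r}^r$. Hence $\downmap\Ps_\lambda^s=\sum_r c_r\Mq^{a'(r)}\Ps_{\lambda-r}^r$, which is exactly \eqref{eq:pidown}. Your $\Mq^{-a'(r)}$ came from mis-inverting this relation, and no bookkeeping needs to be redone. Example~\ref{ex:lowdegree} at $\lambda=(2)$, $s=(1,0)$ confirms it: the coefficient of $\Ps_{(1)}^{(0,1)}$ is $\Mq(\Mt-1)$.

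Second, the hatted form in Step~3. Rescaling via \eqref{eq:varpi-ratio} is valid only when $\lambda-r\ne\varnothing$. For $\lambda=(1)$ you have $r=(0,0)$ and $[r]-1=0$. There $\widehat\Ps_\varnothing^{(0,0)}=1$ is a convention, not $\Ps_\varnothing/(([r]-1)\varpi_\varnothing)$. This case must be checked separately, as the paper does: since $\varpi_{(1)}=1$, the multiplier is $1/([s]-1)$, which again yields \eqref{eq:hat-pidown}.
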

\begin{proof}
For $\lambda=\varnothing$ both sides vanish, so let $\lambda\ne\varnothing$.
By the definition of $\Ps_\lambda^s$, the vector $\pi_-\Ps_\lambda^s$ is the value
at $u=[s]$ of
\[
(\D_\Mq-\Lc^{NS}/u)^{-1}\Q^*J_\lambda
=u\,(u-\LL^+)^{-1}\D_\Mq^{-1}\Q^*J_\lambda
=-\sum_{r\in\R_\lambda}\frac{u\,[r^+]\,\tilde\tau_\lambda^{\,r^+}}{u-[r^+]}\,
\varphi_\lambda^r,
\]
where the last equality is Lemma~\ref{lem:phivectors}(i) and (iii).  The
right side is regular at $u=[s]$, because $[s]\ne[r^+]$.  By
Proposition~\ref{lem:zcreation} applied to $\lambda-r$ and its addable corner
$r$, $\varphi_\lambda^r=\Mq^{a'(r)}z^{-1}\Ps_{\lambda-r}^r$.  Multiplying by $z$
gives \eqref{eq:pidown}, since $-[s][r^+]/([s]-[r^+])=[s]/(1-[s]/[r^+])$.

For \eqref{eq:hat-pidown}, for $\lambda-r\ne\varnothing$ the coefficient in
\eqref{eq:pidown} is
multiplied by
\[
\frac{([r]-1)\varpi_{\lambda-r}}{([s]-1)\varpi_\lambda},
\qquad
\Mq^{a'(r)}([r]-1)\frac{\varpi_{\lambda-r}}{\varpi_\lambda}=1,
\]
by \eqref{eq:varpi-ratio} for $\lambda-r$ and $r$.  For $\lambda=(1)$ we
have $r=(0,0)$, $\Mq^{a'(r)}=1$ and $\widehat\Ps_\varnothing^{(0,0)}=\Ps_\varnothing^{(0,0)}=1$,
and the multiplier is $1/(([s]-1)\varpi_{(1)})=1/([s]-1)$, which gives the same
formula.  The coefficients in \eqref{eq:hat-pidown} sum to $1$ as in the
proof of Proposition~\ref{lem:zcreation}, since $\pi_\infty$ commutes with
$z$ and $\pi_-$.
\end{proof}

By Proposition~\ref{lem:zcreation}, the eigenbasis of
Corollary~\ref{cor:NSspectral} is
$\{\Mq^{-a'(r)}\varphi_\lambda^r:\lambda\vdash N,\ r\in\R_\lambda\}$.

\section{The product-resolvent formula}\label{sec:proof}

We prove Theorem~\ref{thm:main} by an induction for the $\mathbf{y}$-trace of
the normalized product defect.

\subsection{The $\bT$-factor}

\begin{definition}[$\bT$-factor] For finite sets of boxes $\mu,\nu$, in
particular partitions or single boxes, define their $\bT$ factor as
\begin{equation}\label{def:tfactor}
\bT_{\mu\star\nu}(u)
:=
\prod_{x\in\mu,\ y\in\nu}
\mathcal N\left(u/[x+y]\right).
\end{equation}
It is symmetric, $\bT_{\mu\star\nu}=\bT_{\nu\star\mu}$, and multiplicative in
each argument over disjoint unions of boxes.
\end{definition}

\subsection{Preliminaries}\label{ssec:resolvent-facts}

For $F\in\La[z^{-1}]$ homogeneous of total degree $N$ we define the
normalized $\mathbf{y}$-trace by
\begin{equation}\label{eq:yudef}
\mathbf{y}_u(F):=z^{N}\,\pi_\infty(1-\LL/u)^{-1}F
=u\,z^{N}\,\pi_\infty(u-\LL)^{-1}F\ \in\ \QQ(\Mq,\Mt,u).
\end{equation}
Because $\LL$ preserves each total degree, $\mathbf{y}_u$ is a
$\QQ(\Mq,\Mt)$-linear functional on $\La[z^{-1}]_N$.
Since $\pi_\infty$ is multiplicative
and $\pi_\infty\widehat\Ps_\gamma^c=z^{-|\gamma|}$
(Definition~\ref{def:hatpsi}), Theorem~\ref{thm:main} reads
\begin{equation}\label{eq:yutarget}
\mathbf{y}_u(\widehat\Ps_\mu^s\widehat\Ps_\nu^t)
=\frac{\bT_{\mu\star\nu}(u)}{1-[s][t]/u}.
\end{equation}

\begin{remark}[Cancellation of the corner factor]\label{rem:corner-cancellation}
The denominator in \eqref{eq:yutarget} need not produce a pole.  For
$\mu=\nu=(1)$, $s=(0,1)$ and $t=(1,0)$, its right-hand side is
\[
\frac{\mathcal N(u)}{1-\Mt/(\Mq u)}
=\frac{u(u-1)}{(u-\Mq^{-1})(u-\Mt)},
\]
which is regular at $u=[s+t]=\Mt/\Mq$.
\end{remark}

\subsection{The normalized product defect}\label{ssec:defect}

\begin{definition}\label{def:beta}
For $f,g\in\La[z^{-1}]$, define the \emph{normalized product defect}
\begin{equation}\label{eq:beta-def}
\beta(f,g):=\LL^{-1}\bigl(\LL(fg)-\LL(f)\LL(g)\bigr)
=fg-\LL^{-1}\bigl(\LL(f)\LL(g)\bigr).
\end{equation}
The inverse exists degree by degree, since the eigenvalues of $\LL$ are
nonzero corner contents.
\end{definition}

\begin{lemma}[Polynomial form of the defect]\label{lem:beta-structure}
For $f,g\in\La[z^{-1}]$,
\begin{align}
\beta(f,g)
&=fg-\mathbf Q^{-1}\bigl((\mathbf Qf)(\mathbf Qg)\bigr)
\label{eq:beta-Q}\\
&=\mathbf Q(fg)-(\mathbf Qf)g-f(\mathbf Qg)+fg.
\label{eq:beta-coboundary}
\end{align}
In particular, $\beta$ is symmetric and $\La$-bilinear, vanishes if either
argument belongs to $\La$, and satisfies $\pi_\infty\beta(f,g)=0$.
\end{lemma}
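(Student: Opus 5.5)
The argument turns on writing $\LL=\mathbf Q^\dagger\D_{\Mq^{-1}\Mt}\mathbf Q$ and exploiting how each factor interacts with products. \emph{First step.} $\mathbf Q^\dagger$ is the exponential of commuting derivations, so it is a ring automorphism of $\La[z^{-1}]$ with inverse of the same form. Hence $\LL(f)\LL(g)=\mathbf Q^\dagger\bigl((\D\mathbf Qf)(\D\mathbf Qg)\bigr)$, where $\D=\D_{\Mq^{-1}\Mt}$. Since $\D$ acts on $z^{-m}$ by the scalar $(\Mq^{-1}\Mt)^m$, it is multiplicative on Laurent series in $z^{-1}$. Everything here lies in $\La[z^{-1}]$, because $\mathbf Qf$ already has nonpositive $z$-powers. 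So $\LL(f)\LL(g)=\mathbf Q^\dagger\D\bigl((\mathbf Qf)(\mathbf Qg)\bigr)$.

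\emph{Invertibility of $\mathbf Q$.} I next need $\mathbf Q$ to be invertible on $\La[z^{-1}]$. In each total degree it is unitriangular with respect to the $z$-degree, since $Q(z)=1+O(z)$ and multiplying by $z^n$ lowers the $z^{-1}$-degree. Then $\LL^{-1}=\mathbf Q^{-1}\D^{-1}(\mathbf Q^\dagger)^{-1}$. Applying this to the product above gives $\LL^{-1}(\LL f\,\LL g)=\mathbf Q^{-1}\bigl((\mathbf Qf)(\mathbf Qg)\bigr)$, which is \eqref{eq:beta-Q}.

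\emph{Step for \eqref{eq:beta-coboundary}; this is the main point.} I apply $\mathbf Q$ to the claimed right side and check that it equals $\mathbf Q(fg)-(\mathbf Qf)(\mathbf Qg)$. Write $Qf=\pi_{\le0}(Qf)+\pi_+(Qf)$, where $\pi_+$ denotes the positive-power part. Then
\[
(\mathbf Qf)(\mathbf Qg)=\pi_{\le0}(Qf)\,\pi_{\le0}(Qg).
\]
The key identity to prove is
\[
\mathbf Q\bigl(\mathbf Q(fg)-(\mathbf Qf)g-f(\mathbf Qg)+fg\bigr)=\mathbf Q(fg)-(\mathbf Qf)(\mathbf Qg).
\]
I would compute $\mathbf Q((\mathbf Qf)g)=\pi_{\le0}\bigl(Q\cdot\pi_{\le0}(Qf)\cdot g\bigr)$ and use $Q^2f g$-type expansions. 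The obstacle is that $Q$ is not idempotent. So instead I would use the cleaner route: verify \eqref{eq:beta-coboundary} directly from $\mathbf Q=\pi_{\le0}Q$ after noting that both sides are determined by the "$\pi_+$ pieces". Expanding $Qf\,Qg$ into its $\pi_{\le0}$ and $\pi_+$ parts, the cross terms $\pi_+(Qf)\pi_{\le0}(Qg)$ and so on should assemble into the coboundary. I expect this bookkeeping to be the hardest part. If it fails, I would reduce by bilinearity to monomials $f=az^{-m}$, $g=bz^{-n}$ with $a,b\in\La$ and compare coefficients of the series $Q$.

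\emph{Consequences.} Symmetry is evident from either formula. Both formulas are $\La$-bilinear, since $\mathbf Q$ is $\La$-linear, commuting with multiplication by $\eta\in\La$ because $\pi_{\le0}$ ignores $z$-free factors. If $f\in\La$, then $\mathbf Q f$ is not simply $f$. Instead I use \eqref{eq:beta-Q} together with Lemma~\ref{lem:pi0homo}: $\LL(fg)=\LL(f)\LL(g)$, so $\beta(f,g)=0$. Finally, $\pi_\infty$ is a ring map and kills all $p_k$. Thus $\pi_\infty Q(z)=1$, so $\pi_\infty\mathbf Q=\pi_\infty$, and \eqref{eq:beta-coboundary} gives $\pi_\infty\beta=fg-fg-fg+fg=0$ after applying $\pi_\infty$.
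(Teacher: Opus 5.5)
\textbf{There is a genuine gap: you do not prove \eqref{eq:beta-coboundary}, which is the main content of the lemma.}

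Your derivation of \eqref{eq:beta-Q} is correct and matches the paper's argument: $\mathbf Q^\dagger$ and $\D_{\Mq^{-1}\Mt}$ are algebra automorphisms, and $\mathbf Q$ is unitriangular in the $z^{-1}$-degree. Symmetry and $\La$-bilinearity are also fine. For \eqref{eq:beta-coboundary}, you correctly reduce it, by injectivity of $\mathbf Q$, to checking $\mathbf Q\beta'=\mathbf Q(fg)-(\mathbf Qf)(\mathbf Qg)$, where $\beta'$ is its right side. But you then only assert that the cross terms ``should assemble'', and you name a fallback without carrying it out.

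Two consequences follow. First, your proof of $\pi_\infty\beta=0$, which you deduce from \eqref{eq:beta-coboundary}, is not established as written. It does follow from \eqref{eq:beta-Q} alone: $\pi_\infty\mathbf Q=\pi_\infty$ forces $\pi_\infty\mathbf Q^{-1}=\pi_\infty$, and $\pi_\infty$ is multiplicative. Second, a smaller error: your remark that $\mathbf Qf\ne f$ for $f\in\La$ is false. Since $Q(z)f$ has only nonnegative powers of $z$, we get $\mathbf Qf=Q_0f=f$. Vanishing on $\La$ is then immediate from \eqref{eq:beta-Q}, though your detour through Lemma~\ref{lem:pi0homo} is still valid.

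\textbf{How to close the gap.} Your $\pi_+$ route can be completed. The missing observation is that $z\La[[z]]$ is stable under multiplication by $\La[[z]]$. Hence $\pi_{\le0}(a\,\pi_+b)=0$ for $a\in\La[[z]]$ and $b\in\La((z))$, and also $\pi_{\le0}(\pi_+a\,\pi_+b)=0$. Write $Q$ for multiplication by $Q(z)$ and substitute $\pi_{\le0}=\Id-\pi_+$ inside each term. For the right side, expand $(\mathbf Qf)(\mathbf Qg)=\pi_{\le0}\bigl((Qf-\pi_+Qf)(Qg-\pi_+Qg)\bigr)$. Both sides then reduce to
\[
\pi_{\le0}(Qfg)-\pi_{\le0}(Q^2fg)+\pi_{\le0}\bigl(Qf\,\pi_+(Qg)\bigr)+\pi_{\le0}\bigl(Qg\,\pi_+(Qf)\bigr).
\]

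The paper takes your fallback instead. By $\La$-bilinearity it suffices to treat $f=z^{-m}$ and $g=z^{-n}$. Inserting the shift $\mathbf Qz^{-m-1}=z^{-1}\mathbf Qz^{-m}+Q_{m+1}$ into \eqref{eq:beta-Q} gives the telescoping recursion $\beta(z^{-m},z^{-n})-\beta(z^{-m-1},z^{-n+1})=Q_{m+1}z^{-n+1}-Q_nz^{-m}$, which terminates at $\beta(z^{-m-n},1)=0$. The resulting closed form \eqref{eq:beta-monomials} is then matched against $\mathbf Qz^{-k}=\sum_iQ_iz^{-(k-i)}$. It also gives $\pi_\infty\beta=0$ and is reused in Lemma~\ref{lem:universal-traces}.

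Either route closes the gap, but the proposal as written contains neither.
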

\begin{proof}
The operator $\mathbf Q$ is $\La$-linear, and $\mathbf Q-\Id$ strictly
lowers the degree in $z^{-1}$, so $\mathbf Q$ has a $\La$-linear inverse.
Since $p_n^\perp=n\,\partial/\partial p_n$, the exponent defining
$\mathbf Q^\dagger$ is a derivation that lowers the ordinary polynomial
degree in the power sums.  Its exponential is therefore locally finite,
and Taylor's formula gives the simultaneous substitution
\[
\mathbf Q^\dagger F
=\left.F\right|_{p_n\mapsto p_n+(\Mq^{-n}-1)z^{-n}\ (n\ge1)},
\qquad F\in\La[z^{-1}],
\]
with $z^{-1}$ fixed.  This substitution preserves products, and reversing
the translations gives its inverse.  Thus $\mathbf Q^\dagger$ is a
$\QQ(\Mq,\Mt)$-algebra automorphism.  The operator $\D_{\Mq^{-1}\Mt}$ is also
an algebra automorphism, since it rescales $z^{-1}$.  The factorization
\eqref{eq:Laxdef} gives
\[
\LL^{-1}\bigl(\LL(f)\LL(g)\bigr)
=\mathbf Q^{-1}\bigl((\mathbf Qf)(\mathbf Qg)\bigr),
\]
which proves \eqref{eq:beta-Q}.  This expression is $\La$-bilinear and
vanishes when either argument is in $\La$, because $\mathbf Q$ fixes $\La$.

To prove \eqref{eq:beta-coboundary}, it therefore suffices to consider
$f=z^{-m}$, $g=z^{-n}$.  The identity
\[
\mathbf Qz^{-m-1}=z^{-1}\mathbf Qz^{-m}+Q_{m+1}
\]
and \eqref{eq:beta-Q} give, for $m\ge0$, $n\ge1$,
\[
\beta(z^{-m},z^{-n})-\beta(z^{-m-1},z^{-n+1})
=Q_{m+1}z^{-n+1}-Q_nz^{-m}.
\]
Iterating down to $\beta(z^{-m-n},1)=0$ yields
\begin{equation}\label{eq:beta-monomials}
\beta(z^{-m},z^{-n})
=\sum_{j=0}^{n-1}
\left(Q_{m+j+1}z^{-(n-j-1)}-Q_{n-j}z^{-(m+j)}\right).
\end{equation}
Comparing this with
$\mathbf Qz^{-k}=\sum_{i=0}^kQ_i z^{-(k-i)}$ proves
\eqref{eq:beta-coboundary} on monomials, hence in general by
$\La$-bilinearity.  Finally, $\pi_\infty Q_i=0$ for $i\ge1$, so
\eqref{eq:beta-monomials} also gives $\pi_\infty\beta(f,g)=0$.
\end{proof}

\begin{lemma}[Defect recursion]\label{lem:beta-recursion}
For $f,g\in\La[z^{-1}]$,
\begin{equation}\label{eq:beta-recursion}
\beta(f,g)=\beta(z\,\pi_- f,z^{-1}g)
+(z\,\pi_- f)\Q(z^{-1}g)-\Q(\pi_-f)g.
\end{equation}
\end{lemma}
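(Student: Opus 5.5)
The plan is to verify \eqref{eq:beta-recursion} directly from the coboundary form \eqref{eq:beta-coboundary} of Lemma~\ref{lem:beta-structure}, after reducing to monomials by $\La$-bilinearity.

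First I check that every term of \eqref{eq:beta-recursion} is $\La$-bilinear in $(f,g)$. The left side is, by Lemma~\ref{lem:beta-structure}. On the right, $z\pi_-$, multiplication by $z^{-1}$, $\Q$ and $\pi_-$ are all $\La$-linear. So it suffices to take $f=z^{-m}$ and $g=z^{-n}$ with $m,n\ge0$.

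If $m=0$, then $\pi_-f=0$ and the right side is $\beta(0,\cdot)+0-0=0$. This matches $\beta(1,g)=0$, since $\beta$ vanishes when an argument lies in $\La$.

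If $m\ge1$, then $z\pi_-f=z^{-(m-1)}$ and $\pi_-f=z^{-m}$. Writing $\Q$ as in the definition, $\Q(z^{-k})=\pi_0Q(z)z^{-k}=Q_k$. The claim therefore becomes
\[
\beta(z^{-m},z^{-n})-\beta(z^{-m+1},z^{-n-1})=z^{-(m-1)}Q_{n+1}-Q_mz^{-n}.
\]
This is exactly the difference identity in the proof of Lemma~\ref{lem:beta-structure}, with $(m,n)$ replaced by $(m-1,n+1)$. That identity reads
\[
\beta(z^{-(m-1)},z^{-(n+1)})-\beta(z^{-m},z^{-n})=Q_mz^{-n}-Q_{n+1}z^{-(m-1)},
\]
valid for $m-1\ge0$ and $n+1\ge1$. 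Its negative is what we need.

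If I prefer not to rely on that intermediate identity, I would rederive it from \eqref{eq:beta-Q}, using $\mathbf Qz^{-k-1}=z^{-1}\mathbf Qz^{-k}+Q_{k+1}$. Then
\[
(\mathbf Qz^{-m})(\mathbf Qz^{-n})=(z^{-1}\mathbf Qz^{-m+1}+Q_m)(\mathbf Qz^{-n})
\]
and
\[
(\mathbf Qz^{-m+1})(\mathbf Qz^{-n-1})=(\mathbf Qz^{-m+1})(z^{-1}\mathbf Qz^{-n}+Q_{n+1}).
\]
Subtracting, the $z^{-1}$ terms cancel, leaving
\[
Q_m\,\mathbf Qz^{-n}-Q_{n+1}\,\mathbf Qz^{-m+1}=\mathbf Q\bigl(Q_mz^{-n}-Q_{n+1}z^{-m+1}\bigr),
\]
by $\La$-linearity of $\mathbf Q$. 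Applying $\mathbf Q^{-1}$ and noting that the $fg$ terms agree, since $z^{-m}z^{-n}=z^{-m+1}z^{-n-1}$, gives the identity.

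The only point needing care is the bookkeeping of signs and of the index shift. Apart from that, the argument is mechanical.
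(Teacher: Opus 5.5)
Your proof is correct, but it takes a different route from the paper's. The paper never reduces to monomials. It reads \eqref{eq:beta-coboundary} as saying that $\beta$ is the Hochschild coboundary of $\mathbf Q-\Id$, which gives the cocycle identity $\beta(fg,h)-\beta(f,gh)=f\beta(g,h)-\beta(f,g)h$. It then computes $\beta(z^{-1},g)=\Q(z^{-1}g)-Q_1g$ and applies the cocycle identity to the triple $(z\pi_-f,z^{-1},g)$. There $\beta(\pi_-f,g)=\beta(f,g)$ because $\beta$ vanishes on $\La$, and the two $Q_1$ terms cancel.

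You instead check that every term of \eqref{eq:beta-recursion} is $\La$-bilinear, which rests on the $\La$-linearity of $\Q$, $\pi_-$ and $z\pi_-$, all of which holds. You then reduce to $f=z^{-m}$, $g=z^{-n}$. For $m\ge1$ you observe that the claim is exactly the difference identity in the proof of Lemma~\ref{lem:beta-structure}, with $(m,n)$ replaced by $(m-1,n+1)$. Your index shift, your treatment of $m=0$, and your rederivation from \eqref{eq:beta-Q} via $\mathbf Qz^{-k-1}=z^{-1}\mathbf Qz^{-k}+Q_{k+1}$ are all right.

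What each route buys: given what is already proved, yours is shorter. It also makes plain that the lemma is just the $\La$-bilinear extension of the single step that produced \eqref{eq:beta-monomials}. The paper's argument is basis-free and explains the two correction terms as the boundary terms $(z\pi_-f)\,\beta(z^{-1},g)$ and $\beta(z\pi_-f,z^{-1})\,g$ of the cocycle identity. The same identity yields analogous shift relations with any other middle factor in place of $z^{-1}$.
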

\begin{proof}
Equation~\eqref{eq:beta-coboundary} identifies $\beta$ as the derivation
defect of $\mathbf Q-\Id$.  Expanding it gives the Hochschild cocycle identity
\[
\beta(fg,h)-\beta(f,gh)=f\beta(g,h)-\beta(f,g)h.
\]
Also, $\mathbf Q(z^{-1}g)=z^{-1}\mathbf Qg+\Q(z^{-1}g)$, so
\[
\beta(z^{-1},g)=\Q(z^{-1}g)-Q_1g.
\]
Apply the cocycle identity to $(z\,\pi_- f,z^{-1},g)$, using
$z^{-1}(z\,\pi_-f)=\pi_-f$ and $\beta(\pi_0f,g)=0$.  The two terms containing
$Q_1$ cancel, leaving \eqref{eq:beta-recursion}.
\end{proof}

\begin{lemma}[Universal trace identities]\label{lem:universal-traces}
For $f,g\in\La[z^{-1}]$,
\begin{equation}\label{eq:universal-traces}
\Q\bigl(z^{-1}\beta(f,g)\bigr)=0,
\qquad
\Q\bigl(\pi_-\beta(f,g)\bigr)=-\Q(\pi_-f)\Q(\pi_-g).
\end{equation}
\end{lemma}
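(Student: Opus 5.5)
The plan is to reduce both identities to monomials in $z^{-1}$ and then use the explicit formula \eqref{eq:beta-monomials}. By Lemma~\ref{lem:beta-structure}, $\beta$ is $\La$-bilinear. The maps $\Q$, $\pi_-$ and multiplication by $z^{-1}$ are all $\La$-linear, since $\Q\chi=\pi_0Q(z)\chi$ and $Q(z)$ has coefficients in $\La$. Hence both sides of each identity in \eqref{eq:universal-traces} are $\La$-bilinear in $(f,g)$. It therefore suffices to treat $f=z^{-m}$ and $g=z^{-n}$ with $m,n\ge0$.

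If $n=0$ (or, by symmetry, $m=0$ together with $n=0$), then $\beta=0$ because one argument lies in $\La$. In that case $\pi_-g=0$, so both right-hand sides vanish as well. For $n\ge1$ I would take \eqref{eq:beta-monomials} as given and use only the elementary rule $\Q(z^{-k})=\pi_0 Q(z)z^{-k}=Q_k$ for $k\ge1$.

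\emph{First identity.} Multiplying \eqref{eq:beta-monomials} by $z^{-1}$ gives
\[
z^{-1}\beta(z^{-m},z^{-n})=\sum_{j=0}^{n-1}\bigl(Q_{m+j+1}z^{-(n-j)}-Q_{n-j}z^{-(m+j+1)}\bigr),
\]
in which every exponent of $z^{-1}$ is at least $1$. Applying $\Q$ termwise gives $\sum_j\bigl(Q_{m+j+1}Q_{n-j}-Q_{n-j}Q_{m+j+1}\bigr)=0$, because $\La$ is commutative.

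\emph{Second identity.} Here I would keep only the terms of \eqref{eq:beta-monomials} with a strictly positive power of $z^{-1}$. From the first family these are the terms with $j\le n-2$. From the second family they are the terms with $m+j\ge1$, which is all $j$ unless $m=0$, in which case $j=0$ is dropped. Applying $\Q$ and reindexing the first family by $k=j+1$ gives
\[
\Q\bigl(\pi_-\beta\bigr)=\sum_{k=1}^{n-1}Q_{m+k}Q_{n-k}-\sum_{k=0}^{n-1}Q_{m+k}Q_{n-k}+[m=0]\,Q_0Q_n=-Q_mQ_n+[m=0]\,Q_n .
\]
For $m\ge1$ this equals $-\Q(z^{-m})\Q(z^{-n})=-\Q(\pi_-f)\Q(\pi_-g)$. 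For $m=0$ it equals $-Q_0Q_n+Q_n=0$, which matches the right side since $\pi_-f=0$.

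The only delicate point is this bookkeeping of which terms of \eqref{eq:beta-monomials} survive $\pi_-$, together with the boundary cases $m=0$ and $n=0$. The case $m\ge1$, $n=0$ is covered by the symmetry of $\beta$ and of the right-hand side. Apart from that, the argument is a telescoping comparison of two index ranges.
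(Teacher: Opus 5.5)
Your proof is correct and takes essentially the same approach as the paper: reduce by $\La$-bilinearity to monomials $z^{-m},z^{-n}$, then apply $\Q(z^{-1}\,\cdot)$ and $\Q\pi_-$ termwise to \eqref{eq:beta-monomials}, with the same pairwise cancellation and the same reindexing that leaves $-Q_mQ_n$. The only difference is that you check the boundary case $m=0$ through the monomial formula, whereas the paper dismisses it at once: $\beta$ vanishes when an argument lies in $\La$, and $\pi_-$ kills $z^0$, so the paper can assume $m,n\ge1$ from the start.
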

\begin{proof}
By $\La$-bilinearity and the vanishing of $\beta$ on $\La$, it suffices to
take $f=z^{-m}$, $g=z^{-n}$ with $m,n\ge1$.  Apply $\Q(z^{-1}\,\cdot)$ to
\eqref{eq:beta-monomials}.  Since $\Q(z^{-k})=Q_k$ for $k\ge1$, each pair
of summands cancels.  Applying $\Q\pi_-$ instead gives
\[
\begin{aligned}
\Q\bigl(\pi_-\beta(z^{-m},z^{-n})\bigr)
&=\sum_{j=0}^{n-2}Q_{m+j+1}Q_{n-j-1}
-\sum_{j=0}^{n-1}Q_{n-j}Q_{m+j}\\
&=-Q_mQ_n,
\end{aligned}
\]
where the first sum is empty when $n=1$.  This proves the second identity.
\end{proof}

\subsection{The inductive proof}\label{ssec:mainproof}

\begin{lemma}[Projection identities]\label{lem:hat-projections}
For $\lambda\ne\varnothing$ and $s\in\A_\lambda$,
\begin{equation}\label{eq:hat-projections}
\Q(\pi_-\widehat\Ps_\lambda^s)=\widehat J_\lambda,
\qquad
\Q(z^{-1}\widehat\Ps_\lambda^s)=\widehat J_{\lambda+s}.
\end{equation}
\end{lemma}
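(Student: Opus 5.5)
The two identities are hatted versions of facts already established, and I will derive each from the unhatted statement followed by a normalization check.

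\emph{First identity.} Write $\Ps_\lambda^s=J_\lambda+\psi_\lambda^s$ with $\psi_\lambda^s=\pi_-\Ps_\lambda^s$. The proof of Proposition~\ref{prop:epsPsi} gives $\Q\psi_\lambda^s=([s]-1)J_\lambda$; it comes from the $\La$-component of the block form together with the eigenvalue equation of Proposition~\ref{prop:spectral}. Dividing by $([s]-1)\varpi_\lambda$ as in Definition~\ref{def:hatpsi} yields
\[
\Q(\pi_-\widehat\Ps_\lambda^s)=\frac{([s]-1)J_\lambda}{([s]-1)\varpi_\lambda}=\widehat J_\lambda .
\]
This division is legitimate because $[s]\ne1$ for $s\ne(0,0)$. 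When $\lambda\neq\varnothing$ the corner $(0,0)$ lies in $\lambda$, so it is never addable.

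\emph{Second identity.} By Proposition~\ref{lem:zcreation}, $z^{-1}\Ps_\lambda^s=\Mq^{-a'(s)}\varphi_{\lambda+s}^s$. Lemma~\ref{lem:phivectors}(ii), applied with $\gamma=\lambda+s$ and $r=s\in\R_{\lambda+s}$, gives $\Q\varphi_{\lambda+s}^s=J_{\lambda+s}$. Hence
\[
\Q(z^{-1}\widehat\Ps_\lambda^s)=\frac{\Mq^{-a'(s)}J_{\lambda+s}}{([s]-1)\varpi_\lambda}.
\]
By \eqref{eq:varpi-ratio}, $\varpi_{\lambda+s}=\Mq^{a'(s)}([s]-1)\varpi_\lambda$, so the prefactor is $1/\varpi_{\lambda+s}$ and the right side equals $\widehat J_{\lambda+s}$.

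As an alternative route, one can apply $\Q$ to \eqref{eq:hat-zcreation}. For $\mu\ne\varnothing$ the first identity gives $\Q\widehat\Ps_\mu^d=\widehat J_\mu$, because $\Q$ only sees the $\pi_-$ part. Since the coefficients in \eqref{eq:hat-zcreation} sum to $1$, this again yields $\widehat J_{\lambda+s}$. This gives an independent check of the normalization.

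There is no real obstacle here; the only care needed is bookkeeping. First, $\Q$ is defined on $\La^\infty$, and $z^{-1}\widehat\Ps_\lambda^s$ lies there. Second, the factor $\Mq^{-a'(s)}$ from the creation formula has to cancel exactly against the one in \eqref{eq:varpi-ratio}. Both points are immediate from the cited statements.
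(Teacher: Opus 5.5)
Your proof is correct and is essentially the paper's own: the first identity is $\Q\psi_\lambda^s=\pi_0(\LL-1)\Ps_\lambda^s=([s]-1)J_\lambda$ divided by $([s]-1)\varpi_\lambda$, and the second combines Proposition~\ref{lem:zcreation}, Lemma~\ref{lem:phivectors}(ii) and \eqref{eq:varpi-ratio} in the same way the paper does. Your alternative check via \eqref{eq:hat-zcreation} also works if $\Q$ is read as $\Q\pi_-$, which is harmless because the left side lies in $\La^\infty$; it is not truly independent, though, since \eqref{eq:hat-zcreation} was itself derived from the same creation formula.
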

\begin{proof}
By the block form and \eqref{eq:hatpsidef},
\[
\Q(\pi_-\widehat\Ps_\lambda^s)
=\pi_0(\LL-1)\widehat\Ps_\lambda^s
=([s]-1)\pi_0\widehat\Ps_\lambda^s=\widehat J_\lambda.
\]
By Proposition~\ref{lem:zcreation} and Lemma~\ref{lem:phivectors}(ii),
$\Q(z^{-1}\Ps_\lambda^s)=\Mq^{-a'(s)}J_{\lambda+s}$.
Dividing by $([s]-1)\varpi_\lambda$ and using \eqref{eq:varpi-ratio} gives
the second identity.
\end{proof}

For $\mu\ne\varnothing$, put $\widehat\tau_\mu^s=([s]-1)\tau_\mu^s$.  Then
\begin{equation}\label{eq:hat-kerov-moments}
\sum_{s\in\A_\mu}\widehat\tau_\mu^s\widehat\Ps_\mu^s=\widehat J_\mu,
\end{equation}
which is \eqref{eq:jackpsiexp} in the hatted normalization.

For a single box $r$ and any partition $\gamma$, the relation $[r+y]=[r][y]$
gives $\bT_{r\star\gamma}(u)=S_\gamma(u/[r])$, so the definition of $T_\gamma$
in Proposition~\ref{cor:transfer-hierarchy} gives
\begin{equation}\label{eq:boxstar-single}
\bT_{r\star\gamma}(u)
=\frac{u-[r]}{[r]}\left(T_\gamma(u/[r])-1\right).
\end{equation}
Splitting the product over $y\in\nu+t$ in the definition of
$\bT_{\mu\star(\nu+t)}$ gives
\begin{equation}\label{eq:boxmult}
\bT_{\mu\star(\nu+t)}(u)
=\bT_{\mu\star\nu}(u)\,\bT_{\mu\star t}(u).
\end{equation}
For $\gamma\ne\varnothing$ and any single box $r$, the hatted Kerov sum is
\begin{equation}\label{eq:hat-kerov-kernel}
\sum_{a\in\A_\gamma}\frac{\widehat\tau_\gamma^a}{1-[r][a]/u}
=\bT_{r\star\gamma}(u)-1.
\end{equation}
Indeed, Definition~\ref{def:kerov} and $\sum_a\tau_\gamma^a=1$
(Lemma~\ref{lem:kerovmoments}) give the sum as
$(u/[r]-1)T_\gamma(u/[r])-u/[r]$, which equals the right-hand side by
\eqref{eq:boxstar-single}.

For partitions $\mu,\nu$ and addable corners $s\in\A_\mu$, $t\in\A_\nu$, write
\[
\beta_{\mu,\nu}^{s,t}:=\beta(\widehat\Ps_\mu^s,\widehat\Ps_\nu^t).
\]

\begin{lemma}[Product and defect traces]\label{lem:y-product-defect}
For all such $\mu,\nu,s,t$,
\begin{equation}\label{eq:y-product-defect}
\mathbf{y}_u(\beta_{\mu,\nu}^{s,t})
=(1-[s][t]/u)\,\mathbf{y}_u(\widehat\Ps_\mu^s\widehat\Ps_\nu^t)-1.
\end{equation}
\end{lemma}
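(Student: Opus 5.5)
We plan to compute $\mathbf y_u(\beta_{\mu,\nu}^{s,t})$ directly from the definition $\beta(f,g)=fg-\LL^{-1}(\LL(f)\LL(g))$. With $f=\widehat\Ps_\mu^s$ and $g=\widehat\Ps_\nu^t$ we have $\LL(f)\LL(g)=[s][t]fg$, so
\[
\beta_{\mu,\nu}^{s,t}=fg-[s][t]\,\LL^{-1}(fg).
\]
Set $N=|\mu|+|\nu|$. Applying the functional $\mathbf y_u$, which is linear and commutes with functions of $\LL$ inside the resolvent, gives
\[
\mathbf y_u(\beta)=\mathbf y_u(fg)-[s][t]\,z^N\pi_\infty(1-\LL/u)^{-1}\LL^{-1}(fg).
\]

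For the second term we use the partial-fraction identity
\[
(1-\LL/u)^{-1}\LL^{-1}=\LL^{-1}+u^{-1}(1-\LL/u)^{-1}.
\]
It follows from $\frac{1}{x(1-x/u)}=\frac1x+\frac{1/u}{1-x/u}$, and it holds degree by degree because $\LL$ is diagonalizable with nonzero eigenvalues (Proposition~\ref{prop:spectral}). This gives
\[
\mathbf y_u(\beta)=(1-[s][t]/u)\,\mathbf y_u(fg)-[s][t]\,z^N\pi_\infty\LL^{-1}(fg).
\]
Comparing with \eqref{eq:y-product-defect}, the remaining task is to show
\[
[s][t]\,z^N\pi_\infty\LL^{-1}(fg)=1,
\qquad\text{equivalently}\qquad
z^N\pi_\infty\bigl(\LL^{-1}(\LL f\,\LL g)\bigr)=1.
\]

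By \eqref{eq:beta-Q}, $\LL^{-1}(\LL f\LL g)=fg-\beta(f,g)$. Lemma~\ref{lem:beta-structure} gives $\pi_\infty\beta(f,g)=0$, so
\[
\pi_\infty\LL^{-1}(\LL f\LL g)=\pi_\infty(fg)=\pi_\infty(f)\,\pi_\infty(g),
\]
using that $\pi_\infty$ is multiplicative. Definition~\ref{def:hatpsi} gives $\pi_\infty\widehat\Ps^s_\mu=z^{-|\mu|}$, and the same for $\nu$. Hence this equals $z^{-N}$, and the second term is exactly $1$.

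The main point requiring care is the empty-partition case, where $\widehat\Ps_\varnothing=1$ and $[s]=1$. There $\beta=0$, and the identity reduces to $\mathbf y_u(g)(1-[t]/u)=1$. This is consistent with the general computation above, since $\LL g=[t]g$ and $\mathbf y_u(g)=(1-[t]/u)^{-1}$, so no separate argument is needed. The other point to check is that $\LL^{-1}$ and the resolvent commute, which holds on each finite-dimensional graded piece.
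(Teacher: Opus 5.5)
Your proof is correct and follows essentially the paper's route. Both arguments rest on $\beta=F-[s][t]\LL^{-1}F$ (equivalently $\LL\beta=(\LL-[s][t])F$), a partial-fraction identity for the resolvent, $\pi_\infty\beta=0$, and $\pi_\infty F=z^{-N}$. The paper applies $(u-\LL)^{-1}\LL=u(u-\LL)^{-1}-\Id$ to $\LL\beta=(\LL-[s][t])F$, so the constant $-1$ appears at once, whereas you evaluate $\pi_\infty\LL^{-1}F$ separately through $\LL^{-1}(\LL f\,\LL g)=fg-\beta(f,g)$; that identity is just the definition \eqref{eq:beta-def}, not \eqref{eq:beta-Q}.
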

\begin{proof}
Put $F=\widehat\Ps_\mu^s\widehat\Ps_\nu^t$, of total degree
$N=|\mu|+|\nu|$.  Definition~\ref{def:beta} and the eigenvalue equations give
$\LL\beta_{\mu,\nu}^{s,t}=(\LL-[s][t])F$.  Apply
$z^N\pi_\infty(u-\LL)^{-1}$ to this equality.  Using
$(u-\LL)^{-1}\LL=u(u-\LL)^{-1}-\Id$,
$\pi_\infty\beta_{\mu,\nu}^{s,t}=0$ from Lemma~\ref{lem:beta-structure},
and $\pi_\infty F=z^{-N}$ gives \eqref{eq:y-product-defect}.
\end{proof}

\begin{proposition}[$\mathbf{y}$-trace formula]\label{prop:ytrace}
For all partitions $\mu,\nu$ and addable corners $s\in\A_\mu$, $t\in\A_\nu$,
\begin{equation}\label{eq:y-beta}
\mathbf{y}_u(\beta_{\mu,\nu}^{s,t})=\bT_{\mu\star\nu}(u)-1.
\end{equation}
\end{proposition}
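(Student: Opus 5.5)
We argue by induction on $\min(|\mu|,|\nu|)$, using symmetry of $\beta$ and of $\bT$ to assume $|\mu|\le|\nu|$; the paper announces induction on the size of the smaller partition.

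\textbf{Base case.} If $\mu=\varnothing$, then $\widehat\Ps_\varnothing^{(0,0)}=1\in\La$. Lemma~\ref{lem:beta-structure} gives $\beta=0$, so $\mathbf y_u(\beta)=0=\bT_{\varnothing\star\nu}-1$, the empty product being $1$.

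\textbf{Reduction.} For $\mu\ne\varnothing$, apply the defect recursion (Lemma~\ref{lem:beta-recursion}) with $f=\widehat\Ps_\mu^s$ and $g=\widehat\Ps_\nu^t$:
\[
\beta_{\mu,\nu}^{s,t}=\beta\bigl(\downmap\widehat\Ps_\mu^s,z^{-1}\widehat\Ps_\nu^t\bigr)
+\bigl(\downmap\widehat\Ps_\mu^s\bigr)\Q(z^{-1}\widehat\Ps_\nu^t)-\Q(\pi_-\widehat\Ps_\mu^s)\widehat\Ps_\nu^t .
\]
We substitute three earlier results:
\begin{itemize}
\item the down-map formula \eqref{eq:hat-pidown}, which writes $\downmap\widehat\Ps_\mu^s=\sum_r c_r\widehat\Ps_{\mu-r}^r$ with $c_r=\frac{[s]}{[s]-1}\tilde\tau_\mu^{r^+}/(1-[s]/[r^+])$ and $\sum_r c_r=1$;
\item the creation formula \eqref{eq:hat-zcreation}, which writes $z^{-1}\widehat\Ps_\nu^t=\sum_d e_d\widehat\Ps_{\nu+t}^d$ with $\sum_d e_d=1$;
\item the projection identities (Lemma~\ref{lem:hat-projections}), which give $\Q(z^{-1}\widehat\Ps_\nu^t)=\widehat J_{\nu+t}$ and $\Q(\pi_-\widehat\Ps_\mu^s)=\widehat J_\mu$.
\end{itemize}

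\textbf{The three terms.} We apply $\mathbf y_u$ to each.
\begin{itemize}
\item \emph{First term.} Since $|\mu-r|<|\mu|$, the induction hypothesis applies to each pair $(\mu-r,\nu+t)$, giving $\sum_{r,d}c_re_d\bigl(\bT_{(\mu-r)\star(\nu+t)}-1\bigr)$. Using $\sum_d e_d=1$, this equals $\sum_r c_r\bT_{(\mu-r)\star(\nu+t)}-1$.
\item \emph{Second term.} Here $\widehat J_{\nu+t}\in\La$ and $\LL$ is multiplicative against $\La$ (Lemma~\ref{lem:pi0homo}). Expanding $\widehat J_{\nu+t}=\sum_a\widehat\tau^a_{\nu+t}\widehat\Ps^a_{\nu+t}$ by \eqref{eq:hat-kerov-moments}, each product $\widehat\Ps_{\mu-r}^r\widehat J_{\nu+t}$ is expressed through the products $\widehat\Ps_{\mu-r}^r\widehat\Ps_{\nu+t}^a$. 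Their $\mathbf y_u$ values are given by Lemma~\ref{lem:y-product-defect} combined with the inductive hypothesis, namely $\bT/(1-[r][a]/u)$.
\item \emph{Third term.} This is handled the same way: $\widehat J_\mu\widehat\Ps_\nu^t$ expands via $\widehat\tau_\mu^a$, but the pair $(\mu,\nu)$ is not smaller. So we use the multiplicativity $\LL(\widehat J_\mu\widehat\Ps_\nu^t)=\LL(\widehat J_\mu)[t]\widehat\Ps_\nu^t$ instead. This suggests the cleaner approach of computing $\mathbf y_u(\eta\Ps)$ directly for $\eta\in\La$.
\end{itemize}
To do that we need a lemma: for $\eta\in\La$, $\mathbf y_u(\eta\,\widehat\Ps_\nu^t)$ is given by a Kerov-type sum. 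We get it by applying $\pi_\infty(u-\LL)^{-1}$ and the spectral expansion of $\widehat J_\mu$. We expect the needed sums to collapse by \eqref{eq:hat-kerov-kernel} into $\bT_{r\star\gamma}(u)-1$ type factors, with $\gamma=\nu+t$ or $\gamma=\nu$.

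\textbf{Main obstacle.} The main obstacle is the final rational-function identity: after substitution we must show
\[
\sum_r c_r\,\bT_{(\mu-r)\star(\nu+t)}(u)\;+\;(\text{$\La$-terms})\;=\;\bT_{\mu\star\nu}(u).
\]
By \eqref{eq:boxmult}, $\bT_{(\mu-r)\star(\nu+t)}=\bT_{\mu\star\nu}\,\bT_{t\star(\mu-r)}/\bT_{r\star\nu}$, and \eqref{eq:boxstar-single} writes each single-box factor via $T_\nu(u/[r])$ and $T_{\mu-r}(u/[t])$. The identity then becomes a partial-fraction statement in the corner contents. We would verify it by comparing residues at the poles $u=[r^+][t]$ and $u=[r][a]$ and the value at $u=\infty$, using Lemma~\ref{lem:kerovmoments} and \eqref{eq:pfvalues}.

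If this direct approach becomes unwieldy, the fallback is to induct only on adding one box to $\mu$. In that version we would use multiplicativity against $J_{(1)}$ together with the Pieri rule, as in the proof of Proposition~\ref{lem:zcreation}.
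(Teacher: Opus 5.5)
Your base case, the use of Lemma~\ref{lem:beta-recursion}, and your treatment of the first two terms agree with the paper. However, the induction does not close: the third term $H:=\mathbf{y}_u(\widehat J_\mu\widehat\Ps_\nu^t)$ is never computed, and the difficulty is not where your ``main obstacle'' puts it. The $r$-sum needs no partial-fraction identity. By \eqref{eq:hat-kerov-kernel} and multiplicativity of $\bT$ in its first argument, $\mathbf{y}_u(\widehat\Ps_{\mu-r}^{\,r}\widehat J_{\nu+t})=\bT_{(\mu-r)\star(\nu+t)}(u)\bigl(\bT_{r\star(\nu+t)}(u)-1\bigr)=\bT_{\mu\star(\nu+t)}(u)-\bT_{(\mu-r)\star(\nu+t)}(u)$. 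This cancels the $r$-dependence of the first term exactly, so $\sum_r c_r=1$ gives $\mathbf{y}_u(\beta_{\mu,\nu}^{s,t})=\bT_{\mu\star(\nu+t)}(u)-1-H$. Everything therefore hinges on $H$.

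Your proposed route to $H$ fails. Lemma~\ref{lem:pi0homo} needs the $\La$-factor to have no $z$-dependence, but $\LL(\widehat J_\mu)=\widehat J_\mu+\D_\Mq^{-1}\Q^*\widehat J_\mu\notin\La$ for $\mu\ne\varnothing$. So $\LL(\widehat J_\mu b)=\LL(\widehat J_\mu)\LL(b)$ cannot be iterated to give the resolvent. The spectral expansion of $\widehat J_\mu$ only rewrites $H$ as $\sum_{a\in\A_\mu}\widehat\tau_\mu^a\,\mathbf{y}_u(\widehat\Ps_\mu^a\widehat\Ps_\nu^t)$, that is, in terms of the unknown products for the same pair $(\mu,\nu)$. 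In fact $H=\bT_{\mu\star\nu}(u)\bigl(\bT_{\mu\star t}(u)-1\bigr)$, so evaluating $H$ directly is as hard as the proposition itself.

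The missing idea is a self-consistency argument. The identity above shows that $Y:=\mathbf{y}_u(\beta_{\mu,\nu}^{s,t})$ is the same for every $s\in\A_\mu$. Lemma~\ref{lem:y-product-defect} holds for the pair $(\mu,\nu)$ itself, with no induction needed, so $\mathbf{y}_u(\widehat\Ps_\mu^a\widehat\Ps_\nu^t)=(1+Y)/(1-[a][t]/u)$ for all $a$. Then \eqref{eq:hat-kerov-moments} and \eqref{eq:hat-kerov-kernel} give $H=(1+Y)\bigl(\bT_{\mu\star t}(u)-1\bigr)$. Substituting yields the linear equation $(1+Y)\bT_{\mu\star t}(u)=\bT_{\mu\star(\nu+t)}(u)=\bT_{\mu\star\nu}(u)\bT_{\mu\star t}(u)$, which determines $Y$. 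Without this step your argument is left with an unresolved unknown, and the Pieri fallback as sketched does not supply a substitute.
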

\begin{proof}
We follow the Jack induction of \cite[Theorem~4.11]{Mickler:2023}, at fixed
total size $N=|\mu|+|\nu|$, inducting on the size of the smaller partition.
If either partition is empty, one argument of $\beta$ is $1$, so both sides
of \eqref{eq:y-beta} vanish.  By symmetry, assume
$0<|\mu|\le|\nu|$ and fix $t\in\A_\nu$.  For every $r\in\R_\mu$, the pair
$(\mu-r,\nu+t)$ has total size $N$ and smaller size $|\mu|-1$.
The induction hypothesis applies to this pair, and
Lemma~\ref{lem:y-product-defect} also gives its product formula
\eqref{eq:yutarget}.

By \eqref{eq:hat-pidown}, write
\[
z\,\pi_-\widehat\Ps_\mu^s
=\sum_{r\in\R_\mu}c_r^s\widehat\Ps_{\mu-r}^{\,r},
\qquad \sum_{r\in\R_\mu}c_r^s=1.
\]
Likewise the coefficients of $z^{-1}\widehat\Ps_\nu^t$ in the hatted basis
of $Z_{\nu+t}$ sum to $1$, by \eqref{eq:hat-zcreation}.
Lemmas~\ref{lem:beta-recursion}
and~\ref{lem:hat-projections} give
\begin{equation}\label{eq:beta-induction}
\beta_{\mu,\nu}^{s,t}
=\sum_{r\in\R_\mu}c_r^s\left(
\beta(\widehat\Ps_{\mu-r}^{\,r},z^{-1}\widehat\Ps_\nu^t)
+\widehat\Ps_{\mu-r}^{\,r}\widehat J_{\nu+t}\right)
-\widehat J_\mu\widehat\Ps_\nu^t.
\end{equation}
Expanding $z^{-1}\widehat\Ps_\nu^t$ by the creation formula and applying the
induction hypothesis gives
\[
\mathbf{y}_u\bigl(\beta(\widehat\Ps_{\mu-r}^{\,r},z^{-1}\widehat\Ps_\nu^t)\bigr)
=\bT_{(\mu-r)\star(\nu+t)}(u)-1.
\]
For the other term in the sum, expand $\widehat J_{\nu+t}$ using
\eqref{eq:hat-kerov-moments}.  The product formula for the smaller pair and
\eqref{eq:hat-kerov-kernel} give
\[
\begin{aligned}
\mathbf{y}_u(\widehat\Ps_{\mu-r}^{\,r}\widehat J_{\nu+t})
&=\bT_{(\mu-r)\star(\nu+t)}(u)
\sum_{d\in\A_{\nu+t}}\frac{\widehat\tau_{\nu+t}^d}{1-[r][d]/u}\\
&=\bT_{\mu\star(\nu+t)}(u)-\bT_{(\mu-r)\star(\nu+t)}(u).
\end{aligned}
\]
Thus the two terms combine independently of $r$.  Put
$H:=\mathbf{y}_u(\widehat J_\mu\widehat\Ps_\nu^t)$.  Taking the trace of
\eqref{eq:beta-induction} and using $\sum_r c_r^s=1$ yields
\begin{equation}\label{eq:beta-trace-step}
\mathbf{y}_u(\beta_{\mu,\nu}^{s,t})
=\bT_{\mu\star(\nu+t)}(u)-1-H.
\end{equation}
The right-hand side is independent of $s$.  Using this independence,
\eqref{eq:hat-kerov-moments}, Lemma~\ref{lem:y-product-defect}, and
\eqref{eq:hat-kerov-kernel}, we obtain
\[
\begin{aligned}
H&=\sum_{a\in\A_\mu}\widehat\tau_\mu^a\,
\mathbf{y}_u(\widehat\Ps_\mu^a\widehat\Ps_\nu^t)\\
&=\bigl(1+\mathbf{y}_u(\beta_{\mu,\nu}^{s,t})\bigr)
\sum_{a\in\A_\mu}\frac{\widehat\tau_\mu^a}{1-[a][t]/u}\\
&=\bigl(1+\mathbf{y}_u(\beta_{\mu,\nu}^{s,t})\bigr)
\bigl(\bT_{\mu\star t}(u)-1\bigr).
\end{aligned}
\]
Substituting $H$ into \eqref{eq:beta-trace-step} and using \eqref{eq:boxmult}
gives
\[
\bigl(1+\mathbf{y}_u(\beta_{\mu,\nu}^{s,t})\bigr)\bT_{\mu\star t}(u)
=\bT_{\mu\star(\nu+t)}(u)
=\bT_{\mu\star\nu}(u)\bT_{\mu\star t}(u).
\]
Cancelling the nonzero rational factor $\bT_{\mu\star t}(u)$ proves
\eqref{eq:y-beta} and completes the induction.
\end{proof}

\begin{proof}[Proof of Theorem~\ref{thm:main}]
Proposition~\ref{prop:ytrace} and Lemma~\ref{lem:y-product-defect} give
\eqref{eq:yutarget}, which is equivalent to \eqref{eq:maintheorem} by the
hatted normalization.
\end{proof}

\begin{corollary}\label{cor:defect}
For $s\in\A_\mu$ and $t\in\A_\nu$,
\[
\pi_\infty(1-\LL/u)^{-1}\beta(\Ps_\mu^s,\Ps_\nu^t)
=\bigl(\bT_{\mu\star\nu}(u)-1\bigr)
\pi_\infty(\Ps_\mu^s\Ps_\nu^t).
\]
\end{corollary}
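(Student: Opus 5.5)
This is Proposition~\ref{prop:ytrace} rescaled out of the hatted normalization, so the plan is to unwind the normalizations. First I would record the scaling. By Definition~\ref{def:hatpsi}, $\Ps_\mu^s=c_\mu^s\widehat\Ps_\mu^s$ with $c_\mu^s=([s]-1)\varpi_\mu$ for $\mu\ne\varnothing$, and $c_\varnothing^{(0,0)}=1$. By Lemma~\ref{lem:beta-structure}, $\beta$ is bilinear over $\La$, hence over $\QQ(\Mq,\Mt)$. It follows that
\[
\beta(\Ps_\mu^s,\Ps_\nu^t)=c_\mu^sc_\nu^t\,\beta_{\mu,\nu}^{s,t},
\qquad
\pi_\infty(\Ps_\mu^s\Ps_\nu^t)=c_\mu^sc_\nu^t\,z^{-N},
\]
where $N=|\mu|+|\nu|$. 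The second identity uses that $\pi_\infty$ is multiplicative and that $\pi_\infty\widehat\Ps_\gamma^c=z^{-|\gamma|}$.

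Next I would apply the definition \eqref{eq:yudef} of $\mathbf y_u$. The element $\beta_{\mu,\nu}^{s,t}$ is homogeneous of total degree $N$, since $\beta$ is built from degree-preserving operators and products. Therefore
\[
\pi_\infty(1-\LL/u)^{-1}\beta_{\mu,\nu}^{s,t}=z^{-N}\mathbf y_u(\beta_{\mu,\nu}^{s,t}).
\]
By Proposition~\ref{prop:ytrace}, the right-hand side equals $z^{-N}\bigl(\bT_{\mu\star\nu}(u)-1\bigr)$.

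Finally, I would multiply both sides by $c_\mu^sc_\nu^t$. The left side becomes $\pi_\infty(1-\LL/u)^{-1}\beta(\Ps_\mu^s,\Ps_\nu^t)$, and the right side becomes $\bigl(\bT_{\mu\star\nu}(u)-1\bigr)\pi_\infty(\Ps_\mu^s\Ps_\nu^t)$, which is the claim.

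The only point that needs care is the degenerate empty-partition case, where the hatted normalization is defined separately. If either partition is empty, $\beta$ vanishes by Lemma~\ref{lem:beta-structure}, and $\bT_{\mu\star\nu}=1$ because the product defining it is empty. Both sides are then zero, and the case is consistent.

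There is no real obstacle. The content is already in Proposition~\ref{prop:ytrace}, and this corollary is bookkeeping of the scalars $c_\mu^s$.
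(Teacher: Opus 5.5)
Your proposal is correct and matches the paper's proof: both rescale Proposition~\ref{prop:ytrace} by the hatted normalization of Definition~\ref{def:hatpsi}, using bilinearity of $\beta$ and multiplicativity of $\pi_\infty$. Your explicit treatment of the empty-partition case, and of the factor $z^{-N}$ coming from the definition of $\mathbf{y}_u$, is just the bookkeeping that the paper's one-line proof leaves implicit.
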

\begin{proof}
Rescale Proposition~\ref{prop:ytrace} using Definition~\ref{def:hatpsi}
and the multiplicativity of $\pi_\infty$.
\end{proof}

\section{The box-sum identity}\label{sec:boxsum}

We now turn to Littlewood--Richardson coefficients.  In the Jack case,
\cite{Mickler:2023} proves the following box-sum identity.

\begin{theorem}[Jack box-sum identity, {\cite[Proposition~5.6]{Mickler:2023}}]
\label{thm:jackboxsummain}
In the conventions of \cite{Mickler:2023}, the content of a box
$s=(s_1,s_2)$ is $[s]=[s_1,s_2]:=s_1\varepsilon_1+s_2\varepsilon_2$, and
$\alpha=-\varepsilon_2/\varepsilon_1$.  Let $\mu,\nu$ be nonempty partitions
and let $J_\mu J_\nu=\sum_{\lambda}g_{\mu\nu}^{\lambda}(\alpha)J_\lambda$.
Then
\begin{equation}\label{eq:jackboxsummain}
\sum_{\lambda}
\widehat g_{\mu\nu}^{\lambda}
\sum_{b\in\lambda}\frac{1}{u-[b]}
=T^{\mathrm{Jack}}_{\mu\star\nu}(u)-1,
\end{equation}
where $\widehat g_{\mu\nu}^{\lambda}=g_{\mu\nu}^{\lambda}(\alpha)
\varpi_\lambda/(\varpi_\mu\varpi_\nu)$ with
$\varpi_\sigma=\prod_{b\in\sigma,\,b\neq(0,0)}[b]$, and
\[
T^{\mathrm{Jack}}_{\mu\star\nu}(u)
=\prod_{x\in\mu,\,y\in\nu}\mathcal N^{\mathrm{Jack}}(u-[x+y]),
\qquad
\mathcal N^{\mathrm{Jack}}(u)=\frac{(u-[0,0])(u-[1,1])}{(u-[1,0])(u-[0,1])}.
\]
\end{theorem}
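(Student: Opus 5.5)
This identity is proved in \cite[Proposition~5.6]{Mickler:2023}. The plan below reconstructs that argument in the language of Sections~\ref{sec:proof}--\ref{sec:boxsum}; one could instead simply cite it. The Jack Lax operator $\mathcal L$ of \cite{MicklerMoll:2023} is a derivation up to the defect $\delta(f,g)=\mathcal L(fg)-\mathcal L(f)g-f\mathcal L(g)$. The idea is to compute one functional of $\delta(J_\mu,J_\nu)$ in two independent ways: once through the Jack product-resolvent formula \cite[Theorem~4.9]{Mickler:2023}, which produces $T^{\mathrm{Jack}}_{\mu\star\nu}(u)-1$, and once through the expansion $J_\mu J_\nu=\sum_\lambda g_{\mu\nu}^\lambda J_\lambda$, which produces the box sums. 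This is the additive analogue of Lemma~\ref{lem:y-product-defect} and Proposition~\ref{prop:ytrace}.

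\textbf{Step 1 (the $T$ side).} Use the Jack analogue of the $\mathbf y$-trace, $\mathbf y_u(F)=z^N\pi_\infty(u-\mathcal L)^{-1}F$, together with hatted eigenvectors normalised by $\pi_\infty\widehat\Ps_\lambda^s=z^{-|\lambda|}$. The additive counterpart of Proposition~\ref{prop:ytrace}, namely the trace of the defect on $\widehat\Ps_\mu^s\widehat\Ps_\nu^t$, equals $T^{\mathrm{Jack}}_{\mu\star\nu}(u)-1$ divided by a corner-independent normalising factor. I take this as given from \cite{Mickler:2023}; its inductive proof is identical in shape to Proposition~\ref{prop:ytrace}. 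Since the right side does not depend on $s$ and $t$, summing against the hatted Kerov weights (the Jack form of \eqref{eq:hat-kerov-moments}, whose weights sum to one by Lemma~\ref{lem:kerovmoments}) gives the same value for the defect of $\widehat J_\mu,\widehat J_\nu$. This is the right-hand side of \eqref{eq:jackboxsummain}.

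\textbf{Step 2 (the box side).} Rewrite $\delta(\widehat J_\mu,\widehat J_\nu)=\mathcal L(\widehat J_\mu\widehat J_\nu)-\mathcal L(\widehat J_\mu)\widehat J_\nu-\widehat J_\mu\mathcal L(\widehat J_\nu)$ and trace each term separately.

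The first term is expanded as $\sum_\lambda \widehat g^\lambda_{\mu\nu}\,\mathcal L\widehat J_\lambda$. Its trace follows from the Jack analogue of Corollary~\ref{cor:pi_infty_J}: $\pi_\infty(u-\mathcal L)^{-1}J_\lambda$ equals $\varpi_\lambda z^{-|\lambda|}$ times an explicit function of $u$ built from the transfer function $\prod_{b\in\lambda}\mathcal N^{\mathrm{Jack}}(u-[b])$. Composing with one extra factor of $\mathcal L$ (via $(u-\mathcal L)^{-1}\mathcal L=u(u-\mathcal L)^{-1}-1$) and taking the appropriate term is meant to isolate $\sum_{b\in\lambda}1/(u-[b])$. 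Because $\mathcal L$ is additive, content enters linearly, as in the Jack eigenvalue $\sum_{b}[b]$. This extraction is where I would check the normalisation most carefully.

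The two cross terms reduce to $\mu$ and $\nu$ separately, using the Jack analogues of Lemma~\ref{lem:pi0homo} and Lemma~\ref{lem:universal-traces}. They contribute only quantities depending on $\mu$ alone or $\nu$ alone, which cancel against the corresponding pieces of the first term. Here I use $\sum_\lambda\widehat g^\lambda_{\mu\nu}=1$, obtained by applying $\epsilon_1$ and the $\varpi$ normalisation.

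\textbf{Main obstacle.} The hard step is Step 2, specifically matching the traced defect to exactly $\sum_\lambda\widehat g^\lambda_{\mu\nu}\sum_{b\in\lambda}(u-[b])^{-1}$ with no leftover terms coming from $\mu$ or $\nu$. I would first try the cleanest functional, $\pi_\infty(u-\mathcal L)^{-1}$ applied to the defect. This functional kills $\pi_\infty$ of the defect itself, which is the additive analogue of the statement $\pi_\infty\beta=0$ from Lemma~\ref{lem:beta-structure}. I would then verify the resulting identity on $\mu=\nu=(1)$ before trusting the bookkeeping. If the cross terms do not cancel cleanly, the fallback is a third trace, taken through $\Q\pi_-$ and Lemma~\ref{lem:universal-traces}, to eliminate them; this corresponds to the three-trace scheme of \cite{Mickler:2023}.
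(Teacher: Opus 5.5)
\textbf{There is a genuine gap, and it is in Step~1.} The derivation defect vanishes whenever one argument lies in $\La$. This is the derivation property of the Jack Lax operator when one factor is in $\La$, the additive counterpart of Lemma~\ref{lem:pi0homo}; compare Lemma~\ref{lem:beta-structure}, where $\beta$ vanishes if either argument is in $\La$. Hence $\delta(\widehat J_\mu,\widehat J_\nu)=0$ identically, and its trace is $0$, not $T^{\mathrm{Jack}}_{\mu\star\nu}(u)-1$.

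Your averaging step hides this because the weights you average with do not sum to one. Lemma~\ref{lem:kerovmoments} gives $\sum_s\tau_\mu^s=1$ for the unhatted weights. The hatted weights in \eqref{eq:hat-kerov-moments} are $([s]-1)\tau_\mu^s$, and they sum to $1-1=0$. In the Jack normalization they are $[s]\tau_\mu^s$, which again sum to $0$ because the transition measure has mean zero. For the same reason $\sum_\lambda\widehat g^\lambda_{\mu\nu}=0$, not $1$; see \eqref{eq:ghat-sum}, or read off the order-$(1-v)$ term of $\epsilon_v(J_\mu J_\nu)=\epsilon_v(J_\mu)\epsilon_v(J_\nu)$.

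So Step~2 is tracing the zero vector. The relation it yields,
$\sum_\lambda\widehat g^\lambda_{\mu\nu}\,\mathbf y_u(\mathcal L\widehat J_\lambda)=\mathbf y_u(\mathcal L(\widehat J_\mu)\widehat J_\nu)+\mathbf y_u(\widehat J_\mu\mathcal L(\widehat J_\nu))$,
is true, but it is not the box-sum identity, for two reasons.
\begin{itemize}
\item $\mathbf y_u(\mathcal L\widehat J_\lambda)$ is a Cauchy--Stieltjes transform of the transition measure, i.e.\ a ratio of products over corners. It is not $\sum_{b\in\lambda}(u-[b])^{-1}$.
\item $\mathbf y_u$ is not multiplicative. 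The cross terms therefore carry the coupling factor $T^{\mathrm{Jack}}_{\mu\star\nu}$ times double sums over pairs of corners, and they do not split into $\mu$-only and $\nu$-only pieces.
\end{itemize}

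\textbf{The missing idea is the telescoping relation, which is what produces box sums at all.} Write $\Delta_u(\sigma)=\sum_{b\in\sigma}(u-[b])^{-1}$. On the hatted eigenbasis,
\[
\mathbf y_u(\widehat\Ps_\sigma^c)=(u-[c])^{-1}=\Delta_u(\sigma+c)-\Delta_u(\sigma),\qquad
\mathbf x_\gamma(\widehat\Ps_\sigma^c)=\delta_{\gamma,\sigma+c},\qquad
\mathbf z_\lambda(\widehat\Ps_\sigma^c)=\delta_{\lambda\sigma}.
\]
By linearity, every $f$ of degree $N$ satisfies $\mathbf y_u(f)=\sum_\gamma\mathbf x_\gamma(f)\Delta_u(\gamma)-\sum_\lambda\mathbf z_\lambda(f)\Delta_u(\lambda)$ (Proposition~\ref{prop:telescope}).

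Apply this to the defect of $\widehat\Ps_\mu^s$ and $\widehat\Ps_\nu^t$ at one fixed pair of corners $s,t$. Do not average over corners, since that sends everything to zero.
\begin{itemize}
\item Your Step~1, the Jack form of Proposition~\ref{prop:ytrace}, supplies the $\mathbf y$-trace $T^{\mathrm{Jack}}_{\mu\star\nu}(u)-1$.
\item The universal identities of Lemma~\ref{lem:universal-traces}, together with $\Q(\pi_-\widehat\Ps_\mu^s)=\widehat J_\mu$, give $\mathbf x_\gamma=0$ and $\mathbf z_\lambda=-\widehat g^\lambda_{\mu\nu}$.
\end{itemize}
Then \eqref{eq:jackboxsummain} follows at once. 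This three-trace argument is the one behind \cite[Proposition~5.6]{Mickler:2023}, via its Theorems~4.27 and~4.28, and Section~\ref{sec:boxsum} reproduces it for Macdonald functions. What you list as a fallback is in fact the whole proof, and it works only together with the telescoping step.
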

The hypothesis $\mu,\nu\ne\varnothing$, implicit in
\cite[Proposition~5.6]{Mickler:2023}, is needed: for $\mu=\varnothing$ the left
side is $\sum_{b\in\nu}1/(u-[b])$ and the right side is $0$.

For nonempty $\mu,\nu$, write the Macdonald product and its normalization as
\[
J_\mu J_\nu=\sum_{\lambda}g_{\mu\nu}^{\lambda}J_\lambda,
\qquad
\widehat g_{\mu\nu}^{\lambda}
:=g_{\mu\nu}^{\lambda}\frac{\varpi_\lambda}{\varpi_\mu\varpi_\nu}.
\]
\begin{theorem}[Macdonald box-sum identity]\label{thm:boxsummain}
Let $\mu,\nu$ be nonempty partitions.  Then
\begin{equation}\label{eq:boxsummain}
\sum_{\lambda}
\widehat g_{\mu\nu}^{\lambda}
\sum_{b\in\lambda}\frac{1}{1-[b]/u}
=\bT_{\mu\star\nu}(u)-1.
\end{equation}
\end{theorem}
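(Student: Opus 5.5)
The plan is to read \eqref{eq:boxsummain} off the two-parameter identity of Theorem~\ref{thm:maintwo} by expanding it to second order at $v=1$. This relies on Theorem~\ref{thm:maintwo}, which is proved later from Warnaar's formula, so the argument is not self-contained at this point of the paper. A Lax-operator argument via the three traces of $\beta$ is the intended alternative and is described at the end.

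\emph{Step 1: expand $\kappa_{u,v}(J_\lambda)$.} By Theorem~\ref{thm:principal-spec}, $\epsilon_v(J_\lambda)=\Mt^{n(\lambda)}\prod_{b\in\lambda}(1-v/[b])$. For $\lambda\ne\varnothing$ the factor at $b=(0,0)$ is $1-v$, so
\[
\epsilon_v(J_\lambda)=(1-v)\varpi_\lambda\bigl(1+O(1-v)\bigr).
\]
Write $B_\lambda(u):=\sum_{b\in\lambda}1/(1-[b]/u)$. Since
\[
u\,\partial_u\log\epsilon_u(J_\lambda)=\sum_{b\in\lambda}\frac{-u/[b]}{1-u/[b]}=B_\lambda(u),
\]
Taylor expansion in $v$ gives $\epsilon_{uv}(J_\lambda)=\epsilon_u(J_\lambda)\bigl(1+(v-1)B_\lambda(u)+O((v-1)^2)\bigr)$. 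Hence
\[
\kappa_{u,v}(J_\lambda)=\varpi_\lambda\,\epsilon_v(J_\lambda)/\varpi_\lambda\cdot\bigl(1-(v-1)B_\lambda(u)+O((v-1)^2)\bigr),
\]
and the $O(1-v)$ correction inside $\epsilon_v(J_\lambda)$ is $u$-independent.

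\emph{Step 2: insert into Theorem~\ref{thm:maintwo}.} The left side is $\kappa_{u,v}(J_\mu J_\nu)=\sum_\lambda g^\lambda_{\mu\nu}\kappa_{u,v}(J_\lambda)$. The right side is $\kappa_{u,v}(J_\mu)\kappa_{u,v}(J_\nu)\bT_{\mu\star\nu}(uv)$. Since $\mu,\nu$ are nonempty, this is $(1-v)^2\varpi_\mu\varpi_\nu\bigl(\bT_{\mu\star\nu}(u)+O(1-v)\bigr)$.
\begin{itemize}
\item Applying the ring homomorphism $\epsilon_v$ to $J_\mu J_\nu=\sum_\lambda g^\lambda_{\mu\nu}J_\lambda$ shows $\sum_\lambda g^\lambda_{\mu\nu}\epsilon_v(J_\lambda)=\epsilon_v(J_\mu)\epsilon_v(J_\nu)=O((1-v)^2)$.
\item In particular $\sum_\lambda\widehat g^\lambda_{\mu\nu}=0$, and the $(1-v)^2$ coefficient of $\sum_\lambda g^\lambda_{\mu\nu}\epsilon_v(J_\lambda)$ is $\varpi_\mu\varpi_\nu$.
\end{itemize}
Comparing $(1-v)^2$ coefficients then gives
\[
\varpi_\mu\varpi_\nu+\sum_\lambda g^\lambda_{\mu\nu}\varpi_\lambda B_\lambda(u)=\varpi_\mu\varpi_\nu\,\bT_{\mu\star\nu}(u).
\]
Dividing by $\varpi_\mu\varpi_\nu$ yields \eqref{eq:boxsummain}.

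\emph{Main obstacle.} The main obstacle is bookkeeping the $O(1-v)$ terms. The correction in $\epsilon_v(J_\lambda)$ is not uniform in $\lambda$, so the cross term must be handled with exactly the cancellation $\sum_\lambda g^\lambda_{\mu\nu}\epsilon_v(J_\lambda)=\epsilon_v(J_\mu)\epsilon_v(J_\nu)$. The cleanest way is to write
\[
\kappa_{u,v}(J_\lambda)=\epsilon_v(J_\lambda)\,\frac{\epsilon_u(J_\lambda)}{\epsilon_{uv}(J_\lambda)}
\]
and expand only the second factor. I would also check that the $\bT$ factor carries no stray $u$-independent normalization at $v=1$.

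\emph{Lax-operator alternative.} To avoid Theorem~\ref{thm:maintwo}, I would instead take the $(1-v)^2$ term of
\[
\epsilon_v\bigl((u-\LL)^{-1}(J_\mu J_\nu)\bigr)=\sum_\lambda g^\lambda_{\mu\nu}\,\epsilon_v\bigl((u-\LL)^{-1}J_\lambda\bigr),
\]
and evaluate the right side with Corollary~\ref{cor:eps-resolvent}. The left side would be expanded through \eqref{eq:jackpsiexp}, $\LL(\Ps\Ps)=\LL\beta+\ldots$, and the identities of Lemma~\ref{lem:universal-traces} for the first-order-in-$(1-v)$ parts of $\beta$. The $\mathbf{y}_u$-trace of Proposition~\ref{prop:ytrace} supplies the leading order. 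Controlling these first-order terms is where I expect this alternative to be difficult.
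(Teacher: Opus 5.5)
Your argument is correct, and it is essentially the paper's \emph{alternative} proof of Theorem~\ref{thm:boxsummain}. There the $u=0$ case of Theorem~\ref{thm:maintwo} is subtracted. That case is just multiplicativity of $\epsilon_v=\kappa_{0,v}$, which is your cancellation $\sum_\lambda g^\lambda_{\mu\nu}\epsilon_v(J_\lambda)=\epsilon_v(J_\mu)\epsilon_v(J_\nu)$. The paper then compares $(1-v)^2$ coefficients using $\kappa_{u,v}(J_\lambda)/\kappa_{0,v}(J_\lambda)=1+(1-v)\Delta_u(\lambda)+O((1-v)^2)$, which is your Step~1. Your $B_\lambda(u)$ is the paper's $\Delta_u(\lambda)$; the symbol $B_\lambda$ is already taken by the Garsia--Haiman--Tesler generator.

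The paper's primary proof takes a different route and stays inside the Lax framework, so it does not use Warnaar's identity. It sets $f=\beta(\widehat\Ps_\mu^s,\widehat\Ps_\nu^t)$ and uses the telescoping relation \eqref{eq:telescope}, $\mathbf{y}_u(f)=\sum_\gamma\mathbf{x}_\gamma(f)\Delta_u(\gamma)-\sum_\lambda\mathbf{z}_\lambda(f)\Delta_u(\lambda)$. This relation holds because on the hatted eigenbasis $\mathbf{y}_u(\widehat\Ps_\sigma^c)=1/(1-[c]/u)=\Delta_u(\sigma+c)-\Delta_u(\sigma)$. The three traces of $f$ are then known:
$\mathbf{y}_u(f)=\bT_{\mu\star\nu}(u)-1$ by Proposition~\ref{prop:ytrace}, and $\mathbf{x}_\gamma(f)=0$, $\mathbf{z}_\lambda(f)=-\widehat g^\lambda_{\mu\nu}$ by Lemma~\ref{lem:universal-traces}.

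Your route is shorter once Theorem~\ref{thm:maintwo} is available, and it shows the box-sum identity as the $(1-v)^2$ shadow of a stronger statement. The trace route derives it from the product-resolvent formula alone.

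Your sketched ``Lax-operator alternative'' is not the paper's mechanism and would not reach the identity as described. By Corollary~\ref{cor:eps-resolvent}, the $(1-v)$-expansion of $\sum_\lambda g^\lambda_{\mu\nu}\epsilon_v((u-\LL)^{-1}J_\lambda)$ involves only $T_\lambda(u)$ and the $v$-derivatives of $\epsilon_v(J_\lambda)$ at $v=1$. Those derivatives are $u$-independent, so $\Delta_u(\lambda)$ never appears; the leading order only yields an identity weighted by $S_\lambda(u)$. In the paper the box sums $\Delta_u$ come from telescoping $\mathbf{y}_u$ along the creation map. That is the idea the sketch is missing.
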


We write
\begin{equation}\label{eq:Deltau}
\Delta_u(\eta):=\sum_{b\in\eta}\frac{1}{1-[b]/u}
\end{equation}
for the inner sum.  We prove Theorem~\ref{thm:boxsummain} from
Theorem~\ref{thm:main} by a Macdonald analogue of the Jack trace argument
\cite[Theorems~4.27 and~4.28]{Mickler:2023}.

\subsection{The three traces}\label{ssec:traces}

For $G\in\La$ write $[\widehat J_\lambda]G$ for the
coefficient of $\widehat J_\lambda$ in $G$.  On $\La[z^{-1}]_N$, $N\ge1$, we
use $\mathbf{y}_u$ from \eqref{eq:yudef} together with
\[
\mathbf{x}_\gamma(F):=[\widehat J_\gamma]\,\Q(z^{-1}F)\quad(\gamma\vdash N+1),
\qquad
\mathbf{z}_\lambda(F):=[\widehat J_\lambda]\,\Q(\pi_-F)\quad(\lambda\vdash N).
\]
These are the Macdonald analogues of the $\mathbf{x}$-, $\mathbf{y}$- and
$\mathbf{z}$-traces of \cite{Mickler:2023}.  By Lemma~\ref{lem:blockform},
$\Q(\pi_-F)=\pi_0(\LL-1)F$.

\begin{lemma}\label{lem:tracebasis}
For $|\sigma|=N\ge1$ and $c\in\A_\sigma$,
\[
\mathbf{x}_\gamma(\widehat\Ps_\sigma^c)=\delta_{\gamma,\sigma+c},
\qquad
\mathbf{y}_u(\widehat\Ps_\sigma^c)=\frac{1}{1-[c]/u},
\qquad
\mathbf{z}_\lambda(\widehat\Ps_\sigma^c)=\delta_{\lambda\sigma}.
\]
\end{lemma}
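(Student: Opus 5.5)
The three identities follow from the two projection identities of Lemma~\ref{lem:hat-projections} and the eigenvalue equation for $\widehat\Ps_\sigma^c$. I would treat the three traces one at a time.

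\emph{The $\mathbf{x}$-trace.} For $\sigma\ne\varnothing$, Lemma~\ref{lem:hat-projections} gives $\Q(z^{-1}\widehat\Ps_\sigma^c)=\widehat J_{\sigma+c}$. The coefficient of $\widehat J_\gamma$ in this is therefore $\delta_{\gamma,\sigma+c}$. The hypothesis $N\ge1$ excludes $\sigma=\varnothing$, so no separate base case is needed.

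\emph{The $\mathbf{z}$-trace.} Lemma~\ref{lem:hat-projections} gives $\Q(\pi_-\widehat\Ps_\sigma^c)=\widehat J_\sigma$. Equivalently, by Lemma~\ref{lem:blockform}, $\Q(\pi_-F)=\pi_0(\LL-1)F$. Combined with $\LL\widehat\Ps_\sigma^c=[c]\widehat\Ps_\sigma^c$ and $\pi_0\widehat\Ps_\sigma^c=\widehat J_\sigma/([c]-1)$ (Definition~\ref{def:hatpsi}), this again yields $\widehat J_\sigma$. Since the $\widehat J_\lambda$ with $\lambda\vdash N$ are linearly independent, the coefficient extraction gives $\delta_{\lambda\sigma}$.

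\emph{The $\mathbf{y}$-trace.} Because $\widehat\Ps_\sigma^c$ is an eigenvector, $(1-\LL/u)^{-1}\widehat\Ps_\sigma^c=(1-[c]/u)^{-1}\widehat\Ps_\sigma^c$. Applying $\pi_\infty$ and using $\pi_\infty\widehat\Ps_\sigma^c=z^{-N}$ (the remark after Definition~\ref{def:hatpsi}, which rests on Proposition~\ref{prop:piinftypsi}), definition \eqref{eq:yudef} gives $\mathbf{y}_u(\widehat\Ps_\sigma^c)=z^N\cdot z^{-N}/(1-[c]/u)$, as claimed.

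There is no real obstacle here. The only point needing care is the normalization: $\pi_0\widehat\Ps_\sigma^c$ carries the factor $1/([c]-1)$, which is exactly cancelled by the eigenvalue shift $[c]-1$ in $\pi_0(\LL-1)$. Note also that $[c]\ne1$ is guaranteed: $c=(0,0)$ is an addable corner only for the empty partition, which is excluded by $N\ge1$.
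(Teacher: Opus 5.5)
Your proposal is correct and follows the paper's proof: the $\mathbf{x}$- and $\mathbf{z}$-traces come from reading off coefficients in the two identities of Lemma~\ref{lem:hat-projections}, and the $\mathbf{y}$-trace comes from the eigenvalue equation together with $\pi_\infty\widehat\Ps_\sigma^c=z^{-N}$. Your side checks are also accurate: the cancellation of the factor $[c]-1$ against $\pi_0(\LL-1)$, and $[c]\ne1$ for $\sigma\ne\varnothing$.
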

\begin{proof}
The first and third identities follow from Lemma~\ref{lem:hat-projections}
and the definitions of $\mathbf{x}_\gamma$ and $\mathbf{z}_\lambda$.
The second follows from $\LL\widehat\Ps_\sigma^c=[c]\widehat\Ps_\sigma^c$ and
$\pi_\infty\widehat\Ps_\sigma^c=z^{-N}$.
\end{proof}

Thus, in the hatted eigenbasis, $\mathbf{x}_\gamma(F)$ is the sum of the coordinates
of $F$ along the basis $\{\widehat\Ps_{\gamma-r}^{\,r}:r\in\R_\gamma\}$ of the space
$X_\gamma$ of Corollary~\ref{lem:pidown}, and $\mathbf{z}_\lambda(F)$ is the sum of
its coordinates along the basis $\{\widehat\Ps_\lambda^s:s\in\A_\lambda\}$ of
$Z_\lambda$.  The down map sends $Z_\gamma$ into $X_\gamma$, by
\eqref{eq:pidown}.

The $\mathbf{y}$-trace of $\beta_{\mu,\nu}^{s,t}$ is given by
Proposition~\ref{prop:ytrace}.  The other two traces follow from the universal
identities in Lemma~\ref{lem:universal-traces}.

\begin{proposition}[The $\mathbf{x}$- and $\mathbf{z}$-traces]\label{prop:traces}
For nonempty $\mu,\nu$ and addable corners $s\in\A_\mu$, $t\in\A_\nu$, put
$N=|\mu|+|\nu|$.  Then
\begin{equation}\label{eq:tripletrace}
\mathbf{x}_\gamma\bigl(\beta_{\mu,\nu}^{s,t}\bigr)=0
\quad(\gamma\vdash N+1),
\qquad
\mathbf{z}_\lambda\bigl(\beta_{\mu,\nu}^{s,t}\bigr)=-\widehat g_{\mu\nu}^{\lambda}
\quad(\lambda\vdash N).
\end{equation}
Equivalently, since
$\sum_\lambda\widehat g_{\mu\nu}^{\lambda}\widehat J_\lambda=\widehat J_\mu\widehat J_\nu$,
\[
\Q\bigl(z^{-1}\beta_{\mu,\nu}^{s,t}\bigr)=0,
\qquad
\Q\bigl(\pi_-\beta_{\mu,\nu}^{s,t}\bigr)=-\widehat J_\mu\widehat J_\nu .
\]
\end{proposition}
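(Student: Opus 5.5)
The plan is to apply Lemma~\ref{lem:universal-traces} with $f=\widehat\Ps_\mu^s$, $g=\widehat\Ps_\nu^t$, and then read off the coefficients in the basis $\{\widehat J_\lambda\}$ of $\La$.

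The first universal identity gives $\Q(z^{-1}\beta_{\mu,\nu}^{s,t})=0$ directly. Taking the coefficient of $\widehat J_\gamma$ for each $\gamma\vdash N+1$ then gives $\mathbf{x}_\gamma(\beta_{\mu,\nu}^{s,t})=0$. Note that $\beta_{\mu,\nu}^{s,t}$ is homogeneous of total degree $N$, because $\LL$, $\mathbf Q$ and multiplication all respect total degree, so $\mathbf{x}_\gamma$ is indeed applied in the right degree.

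The second universal identity gives
\[
\Q(\pi_-\beta_{\mu,\nu}^{s,t})=-\Q(\pi_-\widehat\Ps_\mu^s)\,\Q(\pi_-\widehat\Ps_\nu^t).
\]
Since $\mu,\nu$ are nonempty, the first identity of Lemma~\ref{lem:hat-projections} applies to both factors and gives $\Q(\pi_-\widehat\Ps_\mu^s)=\widehat J_\mu$ and $\Q(\pi_-\widehat\Ps_\nu^t)=\widehat J_\nu$. Hence $\Q(\pi_-\beta_{\mu,\nu}^{s,t})=-\widehat J_\mu\widehat J_\nu$. This is exactly where nonemptiness is needed: for the empty partition, $\pi_-\widehat\Ps_\varnothing^{(0,0)}=0$, and the lemma does not apply.

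It remains to expand $\widehat J_\mu\widehat J_\nu$. Divide $J_\mu J_\nu=\sum_\lambda g_{\mu\nu}^\lambda J_\lambda$ by $\varpi_\mu\varpi_\nu$ and write $J_\lambda=\varpi_\lambda\widehat J_\lambda$. This gives
\[
\widehat J_\mu\widehat J_\nu=\sum_\lambda g_{\mu\nu}^\lambda\frac{\varpi_\lambda}{\varpi_\mu\varpi_\nu}\,\widehat J_\lambda=\sum_\lambda\widehat g_{\mu\nu}^\lambda\widehat J_\lambda .
\]
Extracting the coefficient of $\widehat J_\lambda$ yields $\mathbf{z}_\lambda(\beta_{\mu,\nu}^{s,t})=-\widehat g_{\mu\nu}^\lambda$. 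The stated equivalence with the operator form is immediate, because the $\widehat J_\lambda$ form a basis of $\La$.

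There is no genuine obstacle here; the only point requiring care is the nonemptiness hypothesis used in Lemma~\ref{lem:hat-projections}.
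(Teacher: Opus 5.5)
Your proposal is correct and follows essentially the same route as the paper: apply Lemma~\ref{lem:universal-traces} to $\widehat\Ps_\mu^s,\widehat\Ps_\nu^t$, use Lemma~\ref{lem:hat-projections} (which needs $\mu,\nu\ne\varnothing$) to identify $\Q(\pi_-\widehat\Ps_\mu^s)=\widehat J_\mu$ and $\Q(\pi_-\widehat\Ps_\nu^t)=\widehat J_\nu$, and then read off coefficients in the basis $\{\widehat J_\lambda\}$. Your extra remarks on the homogeneity of $\beta_{\mu,\nu}^{s,t}$ and on the rescaling $\widehat g_{\mu\nu}^\lambda=g_{\mu\nu}^\lambda\varpi_\lambda/(\varpi_\mu\varpi_\nu)$ simply spell out what the paper's two-line proof leaves implicit.
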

\begin{proof}
By Lemma~\ref{lem:hat-projections},
$\Q(\pi_-\widehat\Ps_\mu^s)=\widehat J_\mu$ and
$\Q(\pi_-\widehat\Ps_\nu^t)=\widehat J_\nu$.  Apply
Lemma~\ref{lem:universal-traces} to these two eigenvectors and extract the
coefficients of the hatted Macdonald functions.
\end{proof}

Together with Proposition~\ref{prop:ytrace}, these formulas determine all
three traces of $\beta_{\mu,\nu}^{s,t}$ independently of the corners $s,t$.
They give the Macdonald analogue of the Jack trace formula
\cite[Theorem~4.27]{Mickler:2023}, using the normalized defect
\eqref{eq:beta-def}.

\begin{proposition}[Telescoping trace relation]\label{prop:telescope}
For every $f\in\La[z^{-1}]_N$, $N\ge1$,
\begin{equation}\label{eq:telescope}
\mathbf{y}_u(f)
=\sum_{\gamma\vdash N+1}\mathbf{x}_\gamma(f)\,\Delta_u(\gamma)
-\sum_{\lambda\vdash N}\mathbf{z}_\lambda(f)\,\Delta_u(\lambda).
\end{equation}
\end{proposition}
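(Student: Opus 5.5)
Both sides of \eqref{eq:telescope} are $\QQ(\Mq,\Mt)$-linear functionals of $f$ on $\La[z^{-1}]_N$. By Proposition~\ref{prop:spectral} and Definition~\ref{def:hatpsi}, the hatted vectors $\widehat\Ps_\sigma^c$ with $\sigma\vdash N$, $c\in\A_\sigma$ form a basis of this space; $N\ge1$ guarantees $\sigma\ne\varnothing$. So it suffices to check \eqref{eq:telescope} on $f=\widehat\Ps_\sigma^c$.

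For such $f$, Lemma~\ref{lem:tracebasis} evaluates all three traces. The left side is $1/(1-[c]/u)$. On the right, only $\gamma=\sigma+c$ contributes to the first sum, with coefficient $1$, and only $\lambda=\sigma$ contributes to the second, with coefficient $1$. The right side is therefore
\[
\Delta_u(\sigma+c)-\Delta_u(\sigma)=\sum_{b\in\sigma+c}\frac{1}{1-[b]/u}-\sum_{b\in\sigma}\frac{1}{1-[b]/u}=\frac{1}{1-[c]/u},
\]
by \eqref{eq:Deltau}, since the box sets differ exactly by the box $c$. The two sides agree on a basis, which proves the identity.

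No step here is a real obstacle; the content is in Lemma~\ref{lem:tracebasis}. The one point to confirm is that $\mathbf{x}_\gamma$ and $\mathbf{z}_\lambda$ give exactly the Kronecker deltas stated in that lemma. This relies on the coefficient extraction $[\widehat J_\gamma]$ being with respect to the basis $\{\widehat J_\gamma\}$, which is a basis of $\La$. It also relies on Lemma~\ref{lem:hat-projections} holding for every nonempty $\sigma$, and both of these are available.
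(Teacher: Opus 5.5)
Your proposal is correct and is essentially the paper's own proof. Both reduce by linearity to the hatted eigenbasis $\widehat\Ps_\sigma^c$ and use Lemma~\ref{lem:tracebasis} to get $\Delta_u(\sigma+c)-\Delta_u(\sigma)=1/(1-[c]/u)=\mathbf{y}_u(\widehat\Ps_\sigma^c)$.
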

\begin{proof}
Both sides are linear in $f$, and the vectors $\widehat\Ps_\gamma^c$
($|\gamma|=N$, $c\in\A_\gamma$) form a basis of $\La[z^{-1}]_N$.  By
Lemma~\ref{lem:tracebasis}, the right-hand side on this vector is
\[
\Delta_u(\gamma+c)-\Delta_u(\gamma)
=\frac{1}{1-[c]/u}
=\mathbf{y}_u(\widehat\Ps_\gamma^c),
\]
which proves \eqref{eq:telescope}.
\end{proof}

\begin{proof}[Proof of Theorem~\ref{thm:boxsummain}]
Choose $s\in\A_\mu$, $t\in\A_\nu$ and substitute
$f=\beta_{\mu,\nu}^{s,t}$ in \eqref{eq:telescope}.  By
Propositions~\ref{prop:ytrace} and~\ref{prop:traces},
\[
\bT_{\mu\star\nu}(u)-1
=-\sum_\lambda \mathbf{z}_\lambda(\beta_{\mu,\nu}^{s,t})\Delta_u(\lambda)
=\sum_\lambda\widehat g_{\mu\nu}^{\lambda}\Delta_u(\lambda).
\]
\end{proof}

As $u\to\infty$, $\Delta_u(\lambda)\to|\lambda|=|\mu|+|\nu|$ and
$\bT_{\mu\star\nu}(u)\to1$, so \eqref{eq:boxsummain} gives, for nonempty
$\mu,\nu$,
\begin{equation}\label{eq:ghat-sum}
\sum_\lambda\widehat g_{\mu\nu}^{\lambda}=0.
\end{equation}

\section{A two-parameter strengthening}\label{sec:twoparam}

Having proved the box-sum identity by Lax-operator methods, we now strengthen
it to the two-parameter identity of Theorem~\ref{thm:maintwo}.

\subsection{The second parameter}\label{ssec:secondparam}

The linear map
$\kappa_{u,v}:\La\to\QQ(\Mq,\Mt,u,v)$ is defined on the integral Macdonald basis
by
\begin{equation}\label{eq:kappa-AFLT}
\kappa_{u,v}(J_\lambda)
:=\frac{\epsilon_u(J_\lambda)\epsilon_v(J_\lambda)}
        {\epsilon_{uv}(J_\lambda)}
=\Mt^{n(\lambda)}\prod_{b\in\lambda}
\frac{(1-u/[b])(1-v/[b])}{1-uv/[b]},
\end{equation}
where the product form follows from Theorem~\ref{thm:principal-spec}.

Theorem~\ref{thm:maintwo} controls the multiplicative defect of $\kappa_{u,v}$
on two basis elements through the same box-star factor as in
Theorem~\ref{thm:main}.  Expanding the product of
Macdonald functions in \eqref{eq:kappa-multiplicativity} gives
\begin{equation}\label{eq:kappa-expanded}
\sum_\lambda g_{\mu\nu}^{\lambda}(\Mq,\Mt)\,
\kappa_{u,v}(J_\lambda)
=
\kappa_{u,v}(J_\mu)\kappa_{u,v}(J_\nu)
\bT_{\mu\star\nu}(uv).
\end{equation}

Since $\kappa_{0,v}=\epsilon_v$, for $\lambda\ne\varnothing$ the
definition of $\varpi_\lambda$ gives
$\kappa_{0,v}(J_\lambda)=\varpi_\lambda(1-v)+O\bigl((1-v)^2\bigr)$, while
\[
\frac{\kappa_{u,v}(J_\lambda)}{\kappa_{0,v}(J_\lambda)}
=\prod_{b\in\lambda}\frac{1-u/[b]}{1-uv/[b]}
=1+(1-v)\,\Delta_u(\lambda)+O\bigl((1-v)^2\bigr).
\]
Hence, for nonempty $\mu,\nu$,
\begin{equation}\label{eq:kappa-v1-expansion}
\sum_\lambda g_{\mu\nu}^{\lambda}
\bigl(\kappa_{u,v}(J_\lambda)-\kappa_{0,v}(J_\lambda)\bigr)
=(1-v)^2\varpi_\mu\varpi_\nu
\sum_\lambda\widehat g_{\mu\nu}^{\lambda}\Delta_u(\lambda)
+O\bigl((1-v)^3\bigr),
\end{equation}
so the left-hand side of \eqref{eq:boxsummain} is the coefficient of
$(1-v)^2$ divided by $\varpi_\mu\varpi_\nu$.  On the Lax side, $v=1$ is the
specialization that normalizes the hatted basis:
$\pi_\infty$ restricts to $\epsilon_1$ on $\La$, and
Proposition~\ref{prop:piinftypsi} is the limit $v\to1$ of
Proposition~\ref{prop:epsPsi}.

\begin{example}[The case $\mu=\nu=(1)$]\label{ex:kappa-one-one}
The integral-form product is
\begin{equation}\label{eq:Jone-square}
J_{(1)} \cdot J_{(1)}
=\frac{1-\Mt}{1-\Mq\Mt}J_{(2)}
 +\frac{1-\Mq}{1-\Mq\Mt}J_{(1,1)}.
\end{equation}
Thus the only nonzero coefficients are
\[
g_{(1),(1)}^{(2)}=\frac{1-\Mt}{1-\Mq\Mt},
\qquad
g_{(1),(1)}^{(1,1)}=\frac{1-\Mq}{1-\Mq\Mt}.
\]
The box contents and $n(\lambda)$ are
\[
\begin{array}{c|c|c}
\lambda&\{[b]:b\in\lambda\}&n(\lambda)\\ \hline
(1)&\{1\}&0\\
(2)&\{1,\Mq^{-1}\}&0\\
(1,1)&\{1,\Mt\}&1.
\end{array}
\]
Put
\[
K:=\kappa_{u,v}(J_{(1)})=\frac{(1-u)(1-v)}{1-uv}.
\]
Substitution of the preceding contents into \eqref{eq:kappa-AFLT} gives
\begin{align*}
\kappa_{u,v}(J_{(2)})
&=K\frac{(1-\Mq u)(1-\Mq v)}{1-\Mq uv},\\
\kappa_{u,v}(J_{(1,1)})
&=\Mt K\frac{(1-u/\Mt)(1-v/\Mt)}{1-uv/\Mt}.
\end{align*}
The only pair of boxes is $x=y=(0,0)$, with $[x+y]=1$.  Consequently
\[
\bT_{(1)\star(1)}(uv)=\mathcal N(uv)
=\frac{(1-\Mq uv/\Mt)(1-uv)}{(1-\Mq uv)(1-uv/\Mt)}.
\]
Dividing \eqref{eq:kappa-multiplicativity} by $K$ and multiplying by
$\Mt(1-\Mq\Mt)(1-\Mq uv)(1-uv/\Mt)$ reduces it to the polynomial identity
\begin{align*}
&\,\,(1-\Mt)(1-\Mq u)(1-\Mq v)(\Mt-uv)\\
&+(1-\Mq)(\Mt-u)(\Mt-v)(1-\Mq uv)\\
&=(1-\Mq\Mt)(1-u)(1-v)(\Mt-\Mq uv),
\end{align*}
which follows by direct expansion.
\end{example}

\subsection{Cauchy-kernel form}\label{ssec:whitproof}

The box-star factor also admits a Cauchy-kernel form, giving a plethystic
description of its dependence on $uv$ in Theorem~\ref{thm:maintwo}.  We use
the biexponent generator of Garsia--Haiman--Tesler
\cite[(I.3)]{GarsiaHaimanTesler:1999},
\[
B_\lambda(\Mq,\Mt):=\sum_{b\in\lambda}\Mq^{a'(b)}\Mt^{\ell'(b)}
=\sum_{i\ge1}\Mt^{i-1}\,\frac{1-\Mq^{\lambda_i}}{1-\Mq}.
\]
Since $[b]=\Mt^{\ell'(b)}\Mq^{-a'(b)}$,
$B_\lambda(\Mq,\Mt^{-1})=\sum_{b\in\lambda}[b]^{-1}$.  Put
\[
\overline{\mathfrak s}_\lambda:=-\Mt^{-1}(1-\Mq)B_\lambda(\Mq,\Mt^{-1})
=\sum_{i\ge1}\Mt^{-i}\bigl(\Mq^{\lambda_i}-1\bigr),
\]
which is $\varepsilon_\lambda-\varepsilon_\varnothing$ for
$\varepsilon_\lambda:=\sum_{i\ge1}\Mt^{-i}\Mq^{\lambda_i}$, the notation of
Bourgine--Cassia--Stoyan \cite{BourgineCassiaStoyan:2025} with
$(q_1,q_2)=(\Mt^{-1},\Mq)$.  We use plethystic substitution as in
\cite{GarsiaHaimanTesler:1999}: for a Laurent polynomial $A$ in
$\Mq,\Mt,u,v$ with integer coefficients, $p_k[A]$ is obtained from $A$ by
replacing each of $\Mq,\Mt,u,v$ by its $k$th power.  For such $A,B$ put
\begin{equation}\label{eq:macrkernel}
\Pi(A,B):=\exp\Bigl(\sum_{k\ge1}\frac1k\frac{1-\Mt^k}{1-\Mq^k}\,
p_k[A]\,p_k[B]\Bigr).
\end{equation}
With $p_k(x_1,x_2,\ldots)$ and $p_k(y_1,y_2,\ldots)$ in place of $p_k[A]$
and $p_k[B]$, the right side of \eqref{eq:macrkernel} is Macdonald's Cauchy
kernel $\prod_{i,j}(\Mt x_iy_j;\Mq)_\infty/(x_iy_j;\Mq)_\infty$, where
$(a;\Mq)_\infty=\prod_{r\ge0}(1-a\Mq^r)$ \cite[Chapter~VI, (2.5)
and~(2.6)]{Macdonald:1995}.

\begin{lemma}
\label{lem:TasExp}
As a formal power series at $u=0$,
\begin{equation}\label{eq:TasExp}
\bT_{\mu\star\nu}(u)
=\Pi\bigl(\Mt u\,\overline{\mathfrak s}_\mu,\,\overline{\mathfrak s}_\nu\bigr),
\qquad
\log\bT_{\mu\star\nu}(u)
=-\sum_{k\ge1}\frac{u^k}{k}(1-\Mq^k)(1-\Mt^{-k})
\left(\sum_{x\in\mu}[x]^{-k}\right)
\left(\sum_{y\in\nu}[y]^{-k}\right).
\end{equation}
\end{lemma}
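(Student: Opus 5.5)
The plan is to prove the logarithmic form first, directly from the definition \eqref{def:tfactor}, and then identify it with the plethystic kernel \eqref{eq:macrkernel}.

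For a single pair of boxes $x\in\mu$, $y\in\nu$, write $c=[x+y]=[x][y]$. By \eqref{eq:Nfactor},
\[
\mathcal N(u/c)=\frac{(1-[1,1]c/u)(1-c/u)}{(1-[0,1]c/u)(1-[1,0]c/u)}.
\]
This is a rational function of $u^{-1}$, not of $u$. So we first rewrite each factor $1-a c/u=-(ac/u)(1-u/(ac))$. The monomial prefactors cancel in the ratio because $[1,1]\cdot[0,0]=[0,1]\cdot[1,0]$. This gives
\[
\mathcal N(u/c)=\frac{(1-u/([1,1]c))(1-u/c)}{(1-u/([0,1]c))(1-u/([1,0]c))},
\]
which is a genuine power series in $u$ with constant term $1$. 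Taking logarithms with $\log(1-w)=-\sum_k w^k/k$ gives
\[
\log\mathcal N(u/c)=-\sum_{k\ge1}\frac{u^k c^{-k}}{k}\bigl(1+[1,1]^{-k}-[0,1]^{-k}-[1,0]^{-k}\bigr).
\]
Since $[0,1]^{-1}=\Mq$, $[1,0]^{-1}=\Mt^{-1}$ and $[1,1]^{-1}=\Mq\Mt^{-1}$, the bracket factors as $(1-\Mq^k)(1-\Mt^{-k})$.

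Next we sum over $x\in\mu$ and $y\in\nu$. Using $c^{-k}=[x]^{-k}[y]^{-k}$, the double sum separates into the product $\bigl(\sum_{x\in\mu}[x]^{-k}\bigr)\bigl(\sum_{y\in\nu}[y]^{-k}\bigr)$. This is exactly the second formula in \eqref{eq:TasExp}.

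For the first formula, we compute the plethystic powers. We have $\sum_{x\in\mu}[x]^{-k}=p_k[B_\mu(\Mq,\Mt^{-1})]$, since $[x]^{-1}=\Mq^{a'(x)}\Mt^{-\ell'(x)}$ and $p_k$ raises $\Mq,\Mt$ to the $k$th power. Hence
\[
p_k[\overline{\mathfrak s}_\mu]=-\Mt^{-k}(1-\Mq^k)\sum_{x\in\mu}[x]^{-k},
\qquad
p_k[\Mt u\,\overline{\mathfrak s}_\mu]=-u^k(1-\Mq^k)\sum_{x\in\mu}[x]^{-k}.
\]
Substituting into \eqref{eq:macrkernel}, the $k$th exponent is
\[
\frac1k\,\frac{1-\Mt^k}{1-\Mq^k}\,u^k(1-\Mq^k)^2\,\Mt^{-k}\,\Sigma_\mu\Sigma_\nu,
\qquad \Sigma_\mu:=\sum_{x\in\mu}[x]^{-k}.
\]
The sign is $(-1)(-1)=+$. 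This simplifies to $\tfrac{u^k}{k}(1-\Mq^k)(\Mt^{-k}-1)\,\Sigma_\mu\Sigma_\nu$. Since $\Mt^{-k}-1=-(1-\Mt^{-k})$, this matches the log formula term by term. Exponentiating finishes the proof, with both sides read as formal series in $u$.

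The main obstacle is bookkeeping rather than conceptual. The two places to check carefully are the conversion of $\mathcal N(u/c)$ from a function of $1/u$ into a series in $u$ (where the monomial prefactors must cancel), and the signs and $\Mt$-powers in the plethysm. For both, I would verify the result against the case $\mu=\nu=(1)$ using the explicit $\mathcal N(uv)$ in Example~\ref{ex:kappa-one-one}.
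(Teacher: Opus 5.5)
Your proposal is correct and follows essentially the same route as the paper. You rewrite $\mathcal N$ as a power series in $u$ with constant term $1$; the paper does this once as \eqref{eq:Nregular} and then substitutes $u/[x+y]$, while you do it directly at $c=[x+y]$. Then, as in the paper, you take logarithms, factor the bracket as $(1-\Mq^k)(1-\Mt^{-k})$, separate the double sum via $[x+y]=[x][y]$, and match the $k$th exponent of $\Pi$ using $p_k[\Mt u\,\overline{\mathfrak s}_\mu]$ and $p_k[\overline{\mathfrak s}_\nu]$, and your sign and $\Mt$-power bookkeeping agrees with the paper's.
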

\begin{proof}
From $[1,1]=\Mt/\Mq$, $[0,0]=1$, $[0,1]=\Mq^{-1}$, and $[1,0]=\Mt$,
\eqref{eq:Nfactor} is
\begin{equation}\label{eq:Nregular}
\mathcal N(u)=\frac{(1-u\Mq/\Mt)(1-u)}{(1-u\Mq)(1-u/\Mt)},
\end{equation}
so
\begin{align}
\log \mathcal N(u)
&=-\sum_{k\ge1}\frac{u^k}{k}
 \left(\Mq^k\Mt^{-k}+1-\Mq^k-\Mt^{-k}\right)\notag\\
&=-\sum_{k\ge1}\frac{u^k}{k}(1-\Mq^k)(1-\Mt^{-k}).
\label{eq:logN}
\end{align}
Since $[x+y]=[x][y]$, summing \eqref{eq:logN} at $u/[x+y]$ over $x\in\mu$,
$y\in\nu$ gives the second identity.  By the definition of plethystic substitution,
$p_k[\Mt u\,\overline{\mathfrak s}_\mu]=-u^k(1-\Mq^k)\sum_{x\in\mu}[x]^{-k}$
and $p_k[\overline{\mathfrak s}_\nu]=-\Mt^{-k}(1-\Mq^k)\sum_{y\in\nu}[y]^{-k}$,
so the $k$th summand in the exponent of \eqref{eq:macrkernel} is
\[
\frac{1-\Mt^k}{1-\Mq^k}\,
p_k[\Mt u\,\overline{\mathfrak s}_\mu]\,p_k[\overline{\mathfrak s}_\nu]
=-u^k(1-\Mq^k)(1-\Mt^{-k})
 \left(\sum_{x\in\mu}[x]^{-k}\right)
 \left(\sum_{y\in\nu}[y]^{-k}\right).
\]
Dividing by $k$ and summing over $k\ge1$ gives $\log\bT_{\mu\star\nu}(u)$,
which proves the first identity.
\end{proof}

Since $\epsilon_u(J_\lambda)=\Mt^{n(\lambda)}\prod_{b\in\lambda}(1-u/[b])$ and
$\log(1-u/[b])=-\sum_{k\ge1}u^k[b]^{-k}/k$, we have
\[
\kappa_{u,v}(J_\lambda)
=\Mt^{n(\lambda)}\exp\Bigl(-\sum_{k\ge1}\frac1k\,
p_k\bigl[(u+v-uv)B_\lambda(\Mq,\Mt^{-1})\bigr]\Bigr),
\]
while $p_k[uA]\,p_k[vB]=(uv)^kp_k[A]\,p_k[B]$ and Lemma~\ref{lem:TasExp} give
\[
\bT_{\mu\star\nu}(uv)
=\Pi\bigl(\Mt u\,\overline{\mathfrak s}_\mu,\,v\,\overline{\mathfrak s}_\nu\bigr).
\]
Thus the multiplicative correction factor in Theorem~\ref{thm:maintwo} is
Macdonald's Cauchy kernel, evaluated at the alphabets of $\mu$ and $\nu$
scaled by $u$ and by $v$ respectively.  The parameters enter this factor
only through their product $uv$, which is the argument of $\bT$ corresponding
to the resolvent variable in Theorem~\ref{thm:main}.

\subsection{Proof of the two-parameter identity}\label{ssec:AFLTproof}

We deduce Theorem~\ref{thm:maintwo} from a finite-rank product identity of
Warnaar \cite[Proposition~3.1]{Warnaar:2005}.

Warnaar works in $n$ variables $x=(x_1,\ldots,x_n)$, with
$\delta=(n-1,n-2,\ldots,1,0)$ and $\Mt^\delta=(\Mt^{n-1},\ldots,\Mt,1)$.  With
$(a;\Mq)_m:=\prod_{\rho=0}^{m-1}(1-a\Mq^\rho)$, for a partition
$\lambda$ with at most $n$ parts, put
\[
(a)_\lambda=(a;\Mq,\Mt)_\lambda:=\Mt^{n(\lambda)}\prod_{i=1}^{n}(a\Mt^{1-i};\Mq)_{\lambda_i},
\]
and define the (non-integral) Macdonald LR coefficients $f^\nu_{\lambda\mu}$
by
$P_\lambda(x)P_\mu(x)=\sum_\nu f^\nu_{\lambda\mu}P_\nu(x)$
\cite[(3.9)]{Warnaar:2005}.

\begin{proposition}[Warnaar, {\cite[Proposition~3.1]{Warnaar:2005}}]
\label{prop:Warnaar-finite-rank}
Let $\lambda=(\lambda_1,\ldots,\lambda_n)$ and $\mu=(\mu_1,\ldots,\mu_n)$ be
partitions.  Then
\begin{equation}\label{eq:Warnaar-finite-rank}
\sum_\nu f^\nu_{\lambda\mu}P_\nu(\Mt^\delta)\frac{(z\Mt^{-n})_\nu}{(z)_\nu}
=P_\lambda(\Mt^\delta)P_\mu(\Mt^\delta)
\prod_{i,j=1}^{n}
\frac{(z\Mt^{1-i-j};\Mq)_{\lambda_i+\mu_j}}{(z\Mt^{2-i-j};\Mq)_{\lambda_i+\mu_j}}.
\end{equation}
\end{proposition}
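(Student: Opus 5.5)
This proposition is quoted from \cite[Proposition~3.1]{Warnaar:2005}, so in the paper it can simply be cited.  If I had to reprove it, I would first rewrite both sides in the language of Section~\ref{sec:specialization}, and then check the identity on a Zariski-dense set of values of $z$.

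\emph{Translation.}  Comparing $(a)_\lambda=\Mt^{n(\lambda)}\prod_i(a\Mt^{1-i};\Mq)_{\lambda_i}$ with Theorem~\ref{thm:principal-spec}, and using $[b]^{-1}=\Mq^{a'(b)}\Mt^{-\ell'(b)}$, gives
\[
(a)_\lambda=\epsilon_a(J_\lambda).
\]
Likewise $P_\nu(\Mt^\delta)=\epsilon_{\Mt^n}(P_\nu)$, since $\epsilon_{\Mt^n}$ is evaluation at $(1,\Mt,\ldots,\Mt^{n-1})$.  Thus the left side of \eqref{eq:Warnaar-finite-rank} is
\[
\sum_\nu f^\nu_{\lambda\mu}\,\epsilon_{\Mt^n}(P_\nu)\,
\frac{\epsilon_{z\Mt^{-n}}(J_\nu)}{\epsilon_z(J_\nu)},
\]
which is exactly $\kappa_{u,v}$ at $u=\Mt^n$, $v=z\Mt^{-n}$, $uv=z$, after converting $P$ to $J$.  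In the same way, the right-hand product over $i,j$ should be matched box by box with $\bT_{\lambda\star\mu}(z)$ together with the ratio $\kappa(J_\lambda)\kappa(J_\mu)/(\epsilon_{\Mt^n}(P_\lambda)\epsilon_{\Mt^n}(P_\mu))$.  This is a telescoping computation in the style of Lemma~\ref{lem:A-corner-factor}.

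\emph{Reduction to special $z$.}  Both sides are rational in $z$, so it suffices to prove the identity for $z=\Mt^{n+m}$ with $m\ge0$ an integer.  At these values
\[
\frac{(z\Mt^{-n})_\nu}{(z)_\nu}=\frac{\epsilon_{\Mt^m}(J_\nu)}{\epsilon_{\Mt^{n+m}}(J_\nu)},
\]
so the left side becomes
\[
\sum_\nu f^\nu_{\lambda\mu}\,\frac{P_\nu(\Mt^{\delta_n})\,P_\nu(\Mt^{\delta_m})}{P_\nu(\Mt^{\delta_{n+m}})}.
\]
This is a statement purely about principal evaluations in $n$, $m$ and $n+m$ variables.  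Note that $\Mt^{\delta_{n+m}}$ is the concatenation of $\Mt^m\cdot\Mt^{\delta_n}$ and $\Mt^{\delta_m}$.

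\emph{Proof at special $z$.}  I would try to prove this by combining the branching rule $P_\nu(x,y)=\sum_\kappa P_{\nu/\kappa}(x)P_\kappa(y)$ with Macdonald's evaluation symmetry
\[
\frac{P_\lambda(\Mq^{\mu}\Mt^\delta)}{P_\lambda(\Mt^\delta)}
=\frac{P_\mu(\Mq^{\lambda}\Mt^\delta)}{P_\mu(\Mt^\delta)}.
\]
The evaluation symmetry turns the product $P_\lambda P_\mu$, evaluated at a shifted point, into the action of a $q$-difference operator, and the $n$-variable right-hand product should then appear as a Pochhammer product over pairs of rows.  If this route does not work, the fallback is induction on $\mu$ via the Pieri rule for $\mu=(1^r)$, using Macdonald's operators $E_r$, which act diagonally with eigenvalues $e_r(\Mq^\nu\Mt^\delta)$.

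\emph{Main obstacle.}  The hardest step is establishing the identity at these special points.  The reduction and the bookkeeping are routine, but the genuine content is a Cauchy-type summation with nonpositive arguments, and that is where the real work in Warnaar's proof lies.  Failing a clean argument, I would simply invoke \cite{Warnaar:2005}, which is what the paper does.
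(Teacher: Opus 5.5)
Your proposal agrees with the paper: the paper gives no proof of this proposition and simply quotes \cite[Proposition~3.1]{Warnaar:2005}, and your dictionary $(a)_\lambda=\epsilon_a(J_\lambda)$, $P_\nu(\Mt^\delta)=\epsilon_{\Mt^n}(P_\nu)$ is exactly the one the paper uses in Corollary~\ref{cor:Warnaar-rewritten}. Your sketched reproof never establishes the key summation at $z=\Mt^{n+m}$ (it is only hoped to follow from branching and evaluation symmetry), so it adds nothing beyond the citation. Also, in your right-hand matching the ratio should be the normalization-free $\kappa_{u,\Mt^n}(J_\lambda)\kappa_{u,\Mt^n}(J_\mu)/(\epsilon_{\Mt^n}(J_\lambda)\epsilon_{\Mt^n}(J_\mu))$ with $u=z\Mt^{-n}$ (or $P$ throughout); mixing $J$ in the numerator with $P$ in the denominator is off by the hook products.
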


For given $\lambda$ and $\mu$ this is an identity of rational functions in
$z$, since $f^\nu_{\lambda\mu}=0$ unless $|\nu|=|\lambda|+|\mu|$.

\begin{corollary}\label{cor:Warnaar-rewritten}
Let $\mu,\nu$ be partitions and let $n\ge|\mu|+|\nu|$ be an integer.  Then
\begin{equation}\label{eq:Warnaar-P-finite-rank}
\kappa_{u,\Mt^n}(J_\mu J_\nu)
=\kappa_{u,\Mt^n}(J_\mu)\kappa_{u,\Mt^n}(J_\nu)
\bT_{\mu\star\nu}(u\Mt^n).
\end{equation}
\end{corollary}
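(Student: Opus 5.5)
The plan is to specialize Warnaar's identity (Proposition~\ref{prop:Warnaar-finite-rank}) directly, with Warnaar's pair $(\lambda,\mu)$ renamed $(\mu,\nu)$, and to identify each ingredient with a principal specialization. Set $v=\Mt^n$ and $z=u\Mt^n=uv$.

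\emph{Evaluation at $\Mt^\delta$.} Since
\[
\epsilon_{\Mt^n}(p_r)=\frac{1-\Mt^{nr}}{1-\Mt^r}=\sum_{i=0}^{n-1}\Mt^{ir},
\]
evaluation at $\Mt^\delta$ is exactly $\epsilon_{\Mt^n}$ on $\La$. The hypothesis $n\ge|\mu|+|\nu|$ ensures that every $P_\lambda$ occurring in $P_\mu P_\nu$ has at most $n$ parts. So the finite-rank product determines all the structure constants $f^\lambda_{\mu\nu}$, and nothing is lost by restricting to $n$ variables.

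\emph{The Pochhammer symbols.} Indexing rows from $1$ and boxes from $0$,
\[
(a)_\lambda=\Mt^{n(\lambda)}\prod_{b\in\lambda}\bigl(1-a\,\Mq^{a'(b)}\Mt^{-\ell'(b)}\bigr)=\Mt^{n(\lambda)}\prod_{b\in\lambda}\bigl(1-a/[b]\bigr)=\epsilon_a(J_\lambda),
\]
by Theorem~\ref{thm:principal-spec}. Hence
\[
\frac{(z\Mt^{-n})_\lambda}{(z)_\lambda}=\frac{\epsilon_u(J_\lambda)}{\epsilon_{uv}(J_\lambda)}.
\]

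\emph{Rewriting the left side.} Write $J_\lambda=c_\lambda P_\lambda$ with $c_\lambda=\prod_{b\in\lambda}h_\lambda(b)$. Then $g^\lambda_{\mu\nu}=f^\lambda_{\mu\nu}\,c_\mu c_\nu/c_\lambda$, and the left side of \eqref{eq:Warnaar-finite-rank} becomes
\[
(c_\mu c_\nu)^{-1}\sum_\lambda g^\lambda_{\mu\nu}\,\kappa_{u,v}(J_\lambda)=(c_\mu c_\nu)^{-1}\,\kappa_{u,v}(J_\mu J_\nu).
\]
Similarly, $\epsilon_v(P_\mu)\epsilon_v(P_\nu)=\epsilon_v(J_\mu)\epsilon_v(J_\nu)/(c_\mu c_\nu)$. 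After cancelling $c_\mu c_\nu$, the claim \eqref{eq:Warnaar-P-finite-rank} therefore reduces to the purely combinatorial identity
\[
\Pi_{\mu,\nu}(z):=\prod_{i,j=1}^n\frac{(z\Mt^{1-i-j};\Mq)_{\mu_i+\nu_j}}{(z\Mt^{2-i-j};\Mq)_{\mu_i+\nu_j}}
=\frac{\epsilon_u(J_\mu)\epsilon_u(J_\nu)}{\epsilon_{z}(J_\mu)\epsilon_{z}(J_\nu)}\,\bT_{\mu\star\nu}(z),
\qquad z=u\Mt^n .
\]

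\emph{Proving the combinatorial identity.} This is the main obstacle, and I would prove it by induction on $|\nu|$ with $\mu$ fixed.

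For the base case $\nu=\varnothing$, the product over $j$ telescopes for each fixed $i$, leaving
\[
\prod_i\frac{(z\Mt^{-i-n+1};\Mq)_{\mu_i}}{(z\Mt^{1-i};\Mq)_{\mu_i}}.
\]
Up to the powers $\Mt^{n(\mu)}$, which cancel, this is $(z\Mt^{-n})_\mu/(z)_\mu=\epsilon_u(J_\mu)/\epsilon_z(J_\mu)$, matching the right side since $\bT_{\mu\star\varnothing}=1$.

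For the inductive step, add a box $t=(j-1,c)$ to $\nu$, so that $\nu_j=c\mapsto c+1$. Only the $j$th column of factors in $\Pi_{\mu,\nu}$ changes, and its ratio is
\[
\prod_{i=1}^n\frac{1-z\Mt^{1-i-j}\Mq^{\mu_i+c}}{1-z\Mt^{2-i-j}\Mq^{\mu_i+c}}.
\]
Grouping adjacent factors in $i$, exactly as in the proof of Lemma~\ref{lem:A-corner-factor}, this should equal
\[
\prod_{x\in\mu}\mathcal N\bigl(z/[x+t]\bigr)\cdot\frac{1-u/[t]}{1-z/[t]}.
\]
The boundary term at $i=n$, where $\mu_n=0$ because $n\ge\ell(\mu)$, produces $1-u/[t]$. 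The term at $i=1$ produces $1-z/[t]$. These are precisely the increments of $\epsilon_u(J_\nu)/\epsilon_z(J_\nu)$ and of $\bT_{\mu\star\nu}$ (by \eqref{eq:boxmult}) when $t$ is added. The care needed is in tracking the $\Mt$-shifts and the condition $\mu_n=0$; if the grouping is awkward I would first check it on $\mu=(1)$ against Example~\ref{ex:kappa-one-one}.
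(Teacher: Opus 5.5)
Your proposal is correct. The reduction step is the same as the paper's: you identify evaluation at $\Mt^\delta$ with $\epsilon_{\Mt^n}$, write $(a)_\lambda=\epsilon_a(J_\lambda)$, and pass from $f^\lambda_{\mu\nu}$ to $g^\lambda_{\mu\nu}$. You differ only in how you prove the factorization of $\prod_{i,j}(z\Mt^{1-i-j};\Mq)_{\mu_i+\nu_j}/(z\Mt^{2-i-j};\Mq)_{\mu_i+\nu_j}$.

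The paper does this in closed form. For each pair of rows it splits the ratio as in \eqref{eq:Warnaar-row-split} into a $\mu_i$-part, a $\nu_j$-part and a $\mu_i\times\nu_j$ rectangle of $\mathcal N$-factors. It then telescopes the one-row parts over the other index. You instead grow $\nu$ one box at a time, so only a single column of the $n\times n$ array changes.

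Your hedged ``this should equal'' is exactly Lemma~\ref{lem:A-corner-factor}. With $w=z/[t]$ the column ratio is
\[
\prod_{i=1}^n\frac{1-w\Mt^{-i}\Mq^{\mu_i}}{1-w\Mt^{1-i}\Mq^{\mu_i}}
=\frac{A_\mu(w\Mt^{-1})}{A_\mu(w)}\prod_{i=1}^n\frac{1-w\Mt^{-i}}{1-w\Mt^{1-i}}
=S_\mu(w)\,\frac{1-w\Mt^{-n}}{1-w}
=\bT_{\mu\star t}(z)\,\frac{1-u/[t]}{1-z/[t]},
\]
using $w\Mt^{-n}=u/[t]$ and $S_\mu(z/[t])=\bT_{\mu\star t}(z)$. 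Only $n\ge\ell(\mu)$ enters here, so your appeal to $\mu_n=0$ is unnecessary.

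The paper's route is non-inductive and symmetric in $\mu,\nu$. Yours makes the box-multiplicativity \eqref{eq:boxmult} of $\bT$, and of $\epsilon_u(J_\nu)/\epsilon_z(J_\nu)$, the driving mechanism. It also links the finite-rank product to the Nazarov--Sklyanin eigenvalue ratio $S_\mu$, mirroring the Lax-side induction of Section~\ref{sec:proof}. Both factorizations need only $\ell(\mu),\ell(\nu)\le n$, so either one also supports the converse remark after the proof of Theorem~\ref{thm:maintwo}.
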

\begin{proof}
Put $N=|\mu|+|\nu|$ and write
$P_\mu P_\nu=\sum_{\gamma\vdash N}f^\gamma_{\mu\nu}P_\gamma$ in $\La$.  Apply
Proposition~\ref{prop:Warnaar-finite-rank} with $(\lambda,\mu,\nu)$ renamed
$(\mu,\nu,\gamma)$ and with $z=u\Mt^n$.  Since $n\ge N$, the monomial
symmetric functions of degree $N$ remain linearly independent in $n$
variables, so the $n$-variable structure constants are the
$f^\gamma_{\mu\nu}$ of $\La$, and the sum runs over all $\gamma\vdash N$.
Here $P_\gamma(\Mt^\delta)=P_\gamma(1,\Mt,\ldots,\Mt^{n-1})=\epsilon_{\Mt^n}(P_\gamma)$,
and, by the definition of $(a;\Mq,\Mt)_\gamma$ and
Theorem~\ref{thm:principal-spec},
\[
\frac{(z\Mt^{-n};\Mq,\Mt)_\gamma}{(z;\Mq,\Mt)_\gamma}
=\prod_{i=1}^{n}\prod_{\rho=0}^{\gamma_i-1}
\frac{1-u\Mt^{1-i}\Mq^\rho}{1-u\Mt^{n+1-i}\Mq^\rho}
=\prod_{b\in\gamma}\frac{1-u/[b]}{1-u\Mt^n/[b]}
=\frac{\kappa_{u,\Mt^n}(J_\gamma)}{\epsilon_{\Mt^n}(J_\gamma)}.
\]
Since $J_\lambda=\prod_{b\in\lambda}h_\lambda(b)\,P_\lambda$,
\[
g_{\mu\nu}^{\gamma}=f_{\mu\nu}^{\gamma}\,
\frac{\prod_{b\in\mu}h_\mu(b)\prod_{b\in\nu}h_\nu(b)}{\prod_{b\in\gamma}h_\gamma(b)},
\]
and multiplying Warnaar's identity by
$\prod_{b\in\mu}h_\mu(b)\prod_{b\in\nu}h_\nu(b)$ gives
\begin{equation}\label{eq:Warnaar-specialized}
\kappa_{u,\Mt^n}(J_\mu J_\nu)
=\epsilon_{\Mt^n}(J_\mu)\epsilon_{\Mt^n}(J_\nu)
\prod_{i,j=1}^{n}
\frac{(u\Mt^{n+1-i-j};\Mq)_{\mu_i+\nu_j}}
     {(u\Mt^{n+2-i-j};\Mq)_{\mu_i+\nu_j}}.
\end{equation}
For fixed $i,j$ put $c=u\Mt^{n+1-i-j}$.  By \eqref{eq:Nregular},
\[
\mathcal N(\Mt c\Mq^{\rho+\sigma})
=\frac{(1-c\Mq^{\rho+\sigma+1})(1-\Mt c\Mq^{\rho+\sigma})}
      {(1-\Mt c\Mq^{\rho+\sigma+1})(1-c\Mq^{\rho+\sigma})},
\]
whose product over $0\le\rho<\mu_i$ and $0\le\sigma<\nu_j$ telescopes to
\[
\frac{(c\Mq^{\mu_i};\Mq)_{\nu_j}(\Mt c;\Mq)_{\nu_j}}
{(\Mt c\Mq^{\mu_i};\Mq)_{\nu_j}(c;\Mq)_{\nu_j}}.
\]
With
$(c;\Mq)_{\mu_i+\nu_j}=(c;\Mq)_{\mu_i}(c\Mq^{\mu_i};\Mq)_{\nu_j}$ this gives
\begin{equation}\label{eq:Warnaar-row-split}
\frac{(c;\Mq)_{\mu_i+\nu_j}}{(\Mt c;\Mq)_{\mu_i+\nu_j}}
=\frac{(c;\Mq)_{\mu_i}}{(\Mt c;\Mq)_{\mu_i}}
 \frac{(c;\Mq)_{\nu_j}}{(\Mt c;\Mq)_{\nu_j}}
 \prod_{\rho=0}^{\mu_i-1}\prod_{\sigma=0}^{\nu_j-1}
 \mathcal N(\Mt c\Mq^{\rho+\sigma}).
\end{equation}
Multiplying the first factor on the right of \eqref{eq:Warnaar-row-split}
over $j=1,\ldots,n$ telescopes, for fixed $i$, to
$(u\Mt^{1-i};\Mq)_{\mu_i}/(u\Mt^{n+1-i};\Mq)_{\mu_i}$, and the product over
$i$ is
\[
\prod_{b\in\mu}\frac{1-u/[b]}{1-u\Mt^n/[b]}
=\frac{\kappa_{u,\Mt^n}(J_\mu)}{\epsilon_{\Mt^n}(J_\mu)};
\]
the second factor likewise gives
$\kappa_{u,\Mt^n}(J_\nu)/\epsilon_{\Mt^n}(J_\nu)$.  Finally,
for boxes $x=(i-1,\rho)\in\mu$ and $y=(j-1,\sigma)\in\nu$,
$\Mt c\Mq^{\rho+\sigma}=u\Mt^{n+2-i-j}\Mq^{\rho+\sigma}=u\Mt^n/[x+y]$, so
the product of the remaining factors is $\bT_{\mu\star\nu}(u\Mt^n)$.
Substituting into \eqref{eq:Warnaar-specialized} gives
\eqref{eq:Warnaar-P-finite-rank}.
\end{proof}

\begin{proof}[Proof of Theorem~\ref{thm:maintwo}]
Both sides of \eqref{eq:kappa-multiplicativity} are rational functions of $v$
over $\QQ(\Mq,\Mt,u)$ whose denominators are products of factors $1-c\,uv$,
$c\in\QQ(\Mq,\Mt)^\times$, none vanishing at $v=\Mt^n$ for any integer $n$.  By
Corollary~\ref{cor:Warnaar-rewritten} they agree at the infinitely many
points $v=\Mt^n$, $n\ge|\mu|+|\nu|$, hence identically (compare the proof of
\cite[Chapter~VI, (6.17)]{Macdonald:1995}).
\end{proof}

Conversely, Theorem~\ref{thm:maintwo} implies
Proposition~\ref{prop:Warnaar-finite-rank} for every $n$.  In the notation of
Corollary~\ref{cor:Warnaar-rewritten}, put $v=\Mt^n$ and $u=z\Mt^{-n}$, and
reverse the steps of its proof.  The factorization of the finite product holds
whenever $\mu$ and $\nu$ have at most $n$ parts.  The terms with
$\ell(\gamma)>n$ drop out, because
$\epsilon_{\Mt^n}(P_\gamma)=P_\gamma(1,\Mt,\ldots,\Mt^{n-1})=0$, and for
$\ell(\gamma)\le n$ the $n$-variable structure constants agree with those of
$\La$.

\begin{proof}[Alternative proof of Theorem~\ref{thm:boxsummain}]
We recover the box-sum identity from Theorem~\ref{thm:maintwo}.  Subtract
\eqref{eq:kappa-multiplicativity} at $u=0$ from the same identity at $u$:
\begin{align}\label{eq:kappa-difference}
\sum_\lambda g_{\mu\nu}^{\lambda}
\bigl(\kappa_{u,v}(J_\lambda)-\kappa_{0,v}(J_\lambda)\bigr)
={}&\kappa_{u,v}(J_\mu)\kappa_{u,v}(J_\nu)
\bT_{\mu\star\nu}(uv)\notag\\
&-\kappa_{0,v}(J_\mu)\kappa_{0,v}(J_\nu).
\end{align}
By \eqref{eq:kappa-v1-expansion}, the left-hand side is
$(1-v)^2\varpi_\mu\varpi_\nu\sum_\lambda\widehat g_{\mu\nu}^{\lambda}\Delta_u(\lambda)
+O((1-v)^3)$.  On the right, the expansions of Section~\ref{ssec:secondparam}
give $\kappa_{u,v}(J_\mu)=\varpi_\mu(1-v)+O((1-v)^2)$ and likewise for $\nu$ and
for $\kappa_{0,v}$, while $\bT_{\mu\star\nu}(uv)=\bT_{\mu\star\nu}(u)+O(1-v)$;
so the right-hand side is
$(1-v)^2\varpi_\mu\varpi_\nu(\bT_{\mu\star\nu}(u)-1)+O((1-v)^3)$.  Comparing
the coefficients of $(1-v)^2$ proves \eqref{eq:boxsummain}.
\end{proof}

\subsection{Implications: residue identities}

We extract Littlewood--Richardson identities directly from
Theorem~\ref{thm:maintwo}. If $g_{\mu\nu}^{\lambda}\ne0$, then
$\mu\cup\nu\subseteq\lambda$ \cite[Chapter~VI, (7.4) and (8.3)]{Macdonald:1995},
and $|\lambda|=|\mu|+|\nu|$; hence
\[
|\lambda\setminus(\mu\cup\nu)|=|\mu\cap\nu|.
\]
We call the boxes of $\lambda\setminus(\mu\cup\nu)$ the \emph{extra boxes} of
$\lambda$.
For any finite set of boxes $S$, define
\begin{equation}\label{eq:PhiS}
\Phi_S(u,v)
:=\prod_{b\in S}\frac{(1-u/[b])(1-v/[b])}{1-uv/[b]}.
\end{equation}
Divide \eqref{eq:kappa-multiplicativity} by
$\kappa_{u,v}(J_\mu)\kappa_{u,v}(J_\nu)$.  Since
$\Phi_\lambda=\Phi_{\mu\cup\nu}\Phi_{\lambda\setminus(\mu\cup\nu)}$ and
$\Phi_\mu\Phi_\nu=\Phi_{\mu\cup\nu}\Phi_{\mu\cap\nu}$, this gives
\begin{equation}\label{eq:kappa-master}
\sum_\lambda g_{\mu\nu}^{\lambda}
 \Mt^{n(\lambda)-n(\mu)-n(\nu)}
\frac{\Phi_{\lambda\setminus(\mu\cup\nu)}(u,v)}
     {\Phi_{\mu\cap\nu}(u,v)}
=
\bT_{\mu\star\nu}(uv).
\end{equation}

We first record the consequence of a single diagonal residue.  Let
$y\notin\mu\cup\nu$ be a box, and take the residue of
\eqref{eq:kappa-master} along $uv=[y]$, writing $w=uv$ and keeping $u$ fixed.
Only terms for which $y$ is an extra box of $\lambda$ contribute.  The factor
indexed by $y$ has residue
\[
\operatorname*{Res}_{w=[y]}
\frac{(1-u/[y])(1-w/(u[y]))}{1-w/[y]}
=
-[y](1-u/[y])(1-u^{-1}).
\]
Setting $v=[y]/u$ in all remaining factors gives
\begin{align}\label{eq:kappa-diagonal-residue}
&\sum_{\substack{\lambda:\ y\in\lambda\setminus(\mu\cup\nu)}}
 g_{\mu\nu}^{\lambda}
 \Mt^{n(\lambda)-n(\mu)-n(\nu)}
\frac{\Phi_{\lambda\setminus(\mu\cup\nu)\setminus\{y\}}
      (u,[y]/u)}
     {\Phi_{\mu\cap\nu}(u,[y]/u)}\notag\\
&\hspace{20mm}=
-\frac{\displaystyle
 \operatorname*{Res}_{w=[y]}\bT_{\mu\star\nu}(w)}
 {[y](1-u/[y])(1-u^{-1})}.
\end{align}

If $\bT_{\mu\star\nu}(w)$ is regular at $w=[y]$, the right-hand side of
\eqref{eq:kappa-diagonal-residue} vanishes and the result is a homogeneous
relation.  At a pole of $\bT_{\mu\star\nu}$ it is an inhomogeneous relation.

\begin{corollary}[Double-residue identities]
\label{cor:kappa-double-residue}
Let $x\in\mu\cap\nu$ satisfy $x\ne(0,0)$, and let $y$ be a box such that
$x+y\notin\mu\cup\nu$.  Then
\begin{equation}\label{eq:kappa-double-residue}
\sum_{\substack{\lambda:\ x+y\in\lambda\setminus(\mu\cup\nu)}}
 g_{\mu\nu}^{\lambda}
 \Mt^{n(\lambda)-n(\mu)-n(\nu)}
\prod_{b\in\lambda\setminus(\mu\cup\nu)\setminus\{x+y\}}
\frac{(1-[x-b])(1-[y-b])}{1-[x+y-b]}
=0.
\end{equation}
\end{corollary}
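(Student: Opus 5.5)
The plan is to specialize the single-residue identity \eqref{eq:kappa-diagonal-residue} with the box $y$ there replaced by $y':=x+y$, and then exploit the vanishing of a factor coming from the box $x\in\mu\cap\nu$. Since $x+y\notin\mu\cup\nu$ by hypothesis, \eqref{eq:kappa-diagonal-residue} applies with $y'$ in place of $y$. Rather than take a second residue literally, I will multiply both sides of \eqref{eq:kappa-diagonal-residue} by the $\lambda$-independent factor $\Phi_{\mu\cap\nu}(u,[y']/u)$ and then set $u=[x]$. This is equivalent to extracting the residue at $u=[x]$, but it avoids any need to control the remaining factors of $\Phi_{\mu\cap\nu}$.

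After this multiplication, the left-hand side becomes
\[
\sum_{\lambda:\ y'\in\lambda\setminus(\mu\cup\nu)} g_{\mu\nu}^{\lambda}\,\Mt^{n(\lambda)-n(\mu)-n(\nu)}\,\Phi_{\lambda\setminus(\mu\cup\nu)\setminus\{y'\}}(u,[y']/u).
\]
I will check that each summand is regular at $u=[x]$. The only denominators are $1-(u\cdot[y']/u)/[b]=1-[y'-b]$ for extra boxes $b\ne y'$, and these are nonzero constants because distinct boxes have distinct contents. At $u=[x]$ we have $v=[y']/u=[y]$, so each factor becomes
\[
\frac{(1-[x]/[b])(1-[y]/[b])}{1-[x+y]/[b]}=\frac{(1-[x-b])(1-[y-b])}{1-[x+y-b]},
\]
which is exactly the summand appearing in \eqref{eq:kappa-double-residue}.

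It then remains to show that the right-hand side, multiplied by $\Phi_{\mu\cap\nu}(u,[y']/u)$, vanishes at $u=[x]$.
\begin{enumerate}
\item The factor of $\Phi_{\mu\cap\nu}$ indexed by $b=x$ contains $(1-u/[x])$, which vanishes at $u=[x]$.
\item The other factors of $\Phi_{\mu\cap\nu}(u,[y']/u)$ have constant denominators $1-[y'-b]$ with $b\in\mu\cap\nu$. These are nonzero since $y'\notin\mu\cup\nu$, and the numerators are polynomial in $u^{\pm1}$, hence regular at $u=[x]$.
\item The right-hand side of \eqref{eq:kappa-diagonal-residue} itself is $-\operatorname{Res}_{w=[y']}\bT_{\mu\star\nu}(w)$, a constant independent of $u$, divided by $[y'](1-u/[y'])(1-u^{-1})$. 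At $u=[x]$, the factor $1-[x]/[y']$ is nonzero because $x\ne x+y$; this uses $y\ne(0,0)$, which holds since otherwise $x+y=x\in\mu\cup\nu$. The factor $1-[x]^{-1}$ is nonzero precisely because of the hypothesis $x\ne(0,0)$.
\end{enumerate}
Hence the product on the right vanishes at $u=[x]$, and equating the two sides gives \eqref{eq:kappa-double-residue}.

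The main point of care is the legitimacy of evaluating the identity \eqref{eq:kappa-diagonal-residue} at $u=[x]$ after multiplication. Both sides are rational functions of $u$ over $\QQ(\Mq,\Mt)$ that agree identically, so I only need both sides to be regular at $u=[x]$, which the checks above establish. I also need to confirm that the residue step producing \eqref{eq:kappa-diagonal-residue} remains valid for $y'$, that is, that no other extra-box factor has a pole along $uv=[y']$; this again follows from distinctness of contents. I expect the regularity checks on the right-hand side, where the hypotheses $x\ne(0,0)$ and $x+y\notin\mu\cup\nu$ enter, to be the only delicate step.
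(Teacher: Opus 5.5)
Your proposal is correct and is essentially the paper's own proof. You apply \eqref{eq:kappa-diagonal-residue} with the box $x+y$, multiply through by $\Phi_{\mu\cap\nu}(u,[x+y]/u)$ before setting $u=[x]$, use the same regularity checks ($b\ne x+y$ for the remaining extra boxes, $[x]\ne1$, and $[x]\ne[x+y]$ via $y\ne(0,0)$), and then use the vanishing of the factor indexed by $x\in\mu\cap\nu$. Your aside that this is equivalent to taking a residue at $u=[x]$ is loose, since the common factor $1/\Phi_{\mu\cap\nu}(u,[x+y]/u)$ can have a double pole there (e.g.\ when $y\in\mu\cap\nu$), but your argument never relies on it, and multiplying first is exactly how the paper sidesteps this.
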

\begin{proof}
Apply \eqref{eq:kappa-diagonal-residue} with its box $y$ replaced by $x+y$.
Multiply both sides by $\Phi_{\mu\cap\nu}(u,[x+y]/u)$ and set $u=[x]$.
This specialization is regular: every denominator in the remaining box
products is $1-[x+y]/[b]$ with $b\ne x+y$.  The scalar right-hand side of
\eqref{eq:kappa-diagonal-residue} is also regular at $u=[x]$, since
$[x]\ne1$ and $[x]\ne[x+y]$; here $y\ne(0,0)$ because $x\in\mu$ and
$x+y\notin\mu$.

The factor $\Phi_{\mu\cap\nu}(u,[x+y]/u)$ vanishes there because $\mu\cap\nu$
contains the box $x$.  The
right-hand side is therefore zero.  On the left, $[x+y]/[x]=[y]$, and each
remaining box factor becomes
\[
\frac{(1-[x]/[b])(1-[y]/[b])}{1-[x+y]/[b]}
=\frac{(1-[x-b])(1-[y-b])}{1-[x+y-b]}.
\]
This proves \eqref{eq:kappa-double-residue}.  Multiplying by
$\Phi_{\mu\cap\nu}$ before evaluation avoids assuming a simple pole: when $y\in\mu\cap\nu$, that
pole can be double.
\end{proof}

The excluded case $x=(0,0)$ is inhomogeneous rather than vanishing.

\begin{corollary}
\label{cor:kappa-root-residue}
Assume $\mu,\nu\ne\varnothing$, and let $y\notin\mu\cup\nu$ be a box.  Then
\begin{equation}\label{eq:kappa-root-residue}
\sum_{\substack{\lambda:\ y\in\lambda\setminus(\mu\cup\nu)}}
\widehat g_{\mu\nu}^{\lambda}
=
\frac{1}{[y]}
\operatorname*{Res}_{w=[y]}\bT_{\mu\star\nu}(w).
\end{equation}
\end{corollary}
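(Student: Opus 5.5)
The plan is to specialize the diagonal residue identity \eqref{eq:kappa-diagonal-residue} at $u\to0$, where the weights should collapse to the normalized coefficients $\widehat g_{\mu\nu}^{\lambda}$. Since the residue along $w=uv=[y]$ was taken with $u$ fixed and $v=[y]/u$, sending $u\to0$ forces $v\to\infty$. I would therefore not work from \eqref{eq:kappa-diagonal-residue} directly. Instead I would return to \eqref{eq:kappa-multiplicativity}, written in the variables $(u,w)$ with $v=w/u$, and take $\operatorname{Res}_{w=[y]}$ of both sides before letting $u\to0$.

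On the left, each term is $g_{\mu\nu}^{\lambda}\kappa_{u,w/u}(J_\lambda)$. Write
\[
\kappa_{u,w/u}(J_\lambda)=\Mt^{n(\lambda)}\prod_{b\in\lambda}\frac{(1-u/[b])(1-w/(u[b]))}{1-w/[b]}.
\]
As $u\to0$, the factor $\prod_b(1-w/(u[b]))$ behaves like $(-w)^{|\lambda|}u^{-|\lambda|}\prod_b[b]^{-1}$. Since $|\lambda|=|\mu|+|\nu|$, the leading power $u^{-|\lambda|}$ matches the leading power of $\kappa_{u,w/u}(J_\mu)\kappa_{u,w/u}(J_\nu)$ on the right. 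Dividing through by that product and letting $u\to0$ should give
\[
\sum_\lambda g_{\mu\nu}^{\lambda}\,\frac{\Mt^{n(\lambda)}\prod_{b\in\lambda}[b]^{-1}}{\Mt^{n(\mu)+n(\nu)}\prod_{b\in\mu}[b]^{-1}\prod_{b\in\nu}[b]^{-1}}\cdot\frac{\prod_{b\in\mu}(1-w/[b])\prod_{b\in\nu}(1-w/[b])}{\prod_{b\in\lambda}(1-w/[b])}=\bT_{\mu\star\nu}(w).
\]
Equivalently, this is Theorem~\ref{thm:maintwo} in the limit $u\to0$, $v\to\infty$ with $uv=w$ fixed.

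Next I would compare the prefactor with $\widehat g$. By definition,
\[
\varpi_\lambda=\Mt^{n(\lambda)}\prod_{b\ne(0,0)}\bigl(1-1/[b]\bigr)=\Mt^{n(\lambda)}\prod_{b\ne(0,0)}\frac{[b]-1}{[b]},
\]
so it is not the same as the prefactor above. The mismatch is a factor $\prod_{b\ne(0,0)}([b]-1)$. This suggests also evaluating at $w=1$: there each ratio contains a factor $(1-w)$, one from each of $\mu$, $\nu$ and $\lambda$, so a single net factor $(1-w)$ remains in the numerator. Together with the factors $(1-w/[b])$ at $w=1$, this reproduces $\varpi$ up to signs. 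The expected cleaner route is to define
\[
R_\lambda(w):=\frac{\prod_{b\in\mu}(1-w/[b])\prod_{b\in\nu}(1-w/[b])}{\prod_{b\in\lambda}(1-w/[b])}.
\]
Take $\operatorname{Res}_{w=[y]}$ of the identity. Only those $\lambda$ with $y\in\lambda\setminus(\mu\cup\nu)$ contribute, each with a simple pole, since $y\notin\mu\cup\nu$ and the boxes of $\lambda$ are distinct. The residue of $R_\lambda$ is
\[
-[y]\,\frac{\prod_{\mu}(1-[y]/[b])\prod_{\nu}(1-[y]/[b])}{\prod_{\lambda\setminus y}(1-[y]/[b])}.
\]
The remaining step is to check that, after multiplying by the prefactor, the residue equals $[y]^{-1}$ times something, with each term proportional to $\widehat g_{\mu\nu}^{\lambda}$ by a constant independent of $\lambda$. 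That constant should then be identified using \eqref{eq:ghat-sum} or the Jack-type normalization.

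The main obstacle is this last normalization check: the $\lambda$-dependence of the residue weight has to cancel against $\varpi_\lambda/\prod_{b\in\lambda}[b]^{-1}\cdots$. It is not obvious that it does, and possibly the intended limit is instead $u\to1$ (the $\varpi$-normalization, by analogy with the $v\to1$ discussion in Section~\ref{ssec:secondparam}). So I would first try the specialization $u=1$ in \eqref{eq:kappa-diagonal-residue}, i.e.\ $v=[y]$. Both sides there have a factor $(1-u^{-1})$ to divide out, and to first order in $(1-u)$ the left side should produce $\varpi_\lambda/(\varpi_\mu\varpi_\nu)$, using the expansions $\kappa_{u,v}(J_\lambda)=\varpi_\lambda(1-u)+\dots$ with the roles of $u$ and $v$ swapped. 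I would compute the ratio of leading coefficients in $(1-u)$ carefully, checking that the extra-box factors evaluated at $v=[y]$ telescope to a $\lambda$-independent constant. Finally I would verify the result against Example~\ref{ex:kappa-one-one} with $y=(1,1)$.
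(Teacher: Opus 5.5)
As written, the proposal does not prove the statement. The route you develop in detail fails, and the route that works is only announced at the end.

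Your limit $u\to0$, $v=w/u\to\infty$ of Theorem~\ref{thm:maintwo} is legitimate, and the identity you obtain is correct. With $E_\sigma(w):=\prod_{b\in\sigma}([b]-w)$ it reads
\[
\sum_\lambda g_{\mu\nu}^{\lambda}\Mt^{n(\lambda)-n(\mu)-n(\nu)}\frac{E_\mu(w)E_\nu(w)}{E_\lambda(w)}=\bT_{\mu\star\nu}(w).
\]
Its residue at $w=[y]$, however, is a different linear relation. Put $X=\lambda\setminus(\mu\cup\nu)$ and use the multiset identity $\mu+\nu=(\mu\cup\nu)+(\mu\cap\nu)$. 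Then the $\lambda$-term of that residue is $-g_{\mu\nu}^{\lambda}\Mt^{n(\lambda)-n(\mu)-n(\nu)}E_{\mu\cap\nu}([y])/E_{X\setminus\{y\}}([y])$. By contrast, $\widehat g_{\mu\nu}^{\lambda}=g_{\mu\nu}^{\lambda}\Mt^{n(\lambda)-n(\mu)-n(\nu)}\prod_{b\in X}(1-1/[b])/\prod_{a\in(\mu\cap\nu)\setminus\{(0,0)\}}(1-1/[a])$. The ratio of the two contains $\prod_{b\in X\setminus\{y\}}\bigl(([b]-[y])(1-1/[b])\bigr)^{-1}$, which depends on $\lambda$ as soon as $|\mu\cap\nu|\ge2$. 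For example, with $\mu=\nu=(2,1)$ and $y=(1,1)$, the terms $\lambda=(4,2)$ and $\lambda=(3,3)$ already receive different factors. So the $\lambda$-independent constant you are looking for does not exist. Evaluating at $w=1$ cannot supply it, since that is incompatible with taking the residue at $w=[y]$. Nor can \eqref{eq:ghat-sum}, which is a sum over all $\lambda$, not over those containing $y$. Your closing suspicion was right.

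The $u=1$ fallback is exactly the paper's proof, but it has to be carried out, and parts of your sketch are off. In \eqref{eq:kappa-diagonal-residue} the $\kappa$'s have already been divided out, so the expansion $\kappa_{u,v}(J_\lambda)=\varpi_\lambda(1-u)+\cdots$ plays no role. Instead, both sides have a simple pole at $u=1$, and you take $\operatorname*{Res}_{u=1}$.

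\begin{itemize}
\item On the right, $1/(1-u^{-1})$ yields $-\operatorname*{Res}_{w=[y]}\bT_{\mu\star\nu}(w)/([y]-1)$.
\item On the left, the pole comes only from the root box. Because $\mu,\nu\ne\varnothing$ (this is where the hypothesis is used), $(0,0)\in\mu\cap\nu$, and $1/\Phi_{\{(0,0)\}}(u,[y]/u)=(1-[y])/\bigl((1-u)(1-[y]/u)\bigr)$ has residue $-1$ at $u=1$. Every other $a\in\mu\cap\nu$ contributes poles only at $u=[a]$ and $u=[y]/[a]$, and $\Phi_{X\setminus\{y\}}(u,[y]/u)$ is regular at $u=1$.
\item The remaining factors are evaluated with $\Phi_{\{b\}}(1,v)=1-1/[b]$, valid for every $v$. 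They do not telescope to a constant. They are precisely the $\lambda$-dependent part of $\varpi_\lambda/(\varpi_\mu\varpi_\nu)$.
\end{itemize}

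So the residue of the left side is $-\sum_{\lambda:\,y\in\lambda\setminus(\mu\cup\nu)}\widehat g_{\mu\nu}^{\lambda}/(1-1/[y])$, and equating the two residues gives \eqref{eq:kappa-root-residue}.

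Your proposed check with $\mu=\nu=(1)$, $y=(1,1)$ is vacuous, since both sides are $0$; use $y=(0,1)$ or $(1,0)$ instead.

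There is also a one-line proof closer to your $\varpi$-normalization instinct: take $\operatorname*{Res}_{u=[y]}$ of the box-sum identity \eqref{eq:boxsummain}. The function $\Delta_u(\lambda)=\sum_{b\in\lambda}u/(u-[b])$ has residue $[y]$ at $u=[y]$ exactly when $y\in\lambda$. For $y\notin\mu\cup\nu$ this means $y\in\lambda\setminus(\mu\cup\nu)$, so the result is \eqref{eq:kappa-root-residue} directly.
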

\begin{proof}
Take the residue of \eqref{eq:kappa-diagonal-residue} at $u=1$, corresponding
to the box $(0,0)\in\mu\cap\nu$.  The residue of the right-hand side is
\[
-\frac{\operatorname*{Res}_{w=[y]}\bT_{\mu\star\nu}(w)}{[y]-1}.
\]
On the left, the root-box factor contributes the common residue $-1$.  For
every supported $\lambda$, the definition of $\varpi$ gives
\[
\frac{\varpi_\lambda}{\varpi_\mu\varpi_\nu}
=
\Mt^{n(\lambda)-n(\mu)-n(\nu)}
\frac{\displaystyle
 \prod_{b\in\lambda\setminus(\mu\cup\nu)}(1-1/[b])}
{\displaystyle
 \prod_{a\in(\mu\cap\nu)\setminus\{(0,0)\}}(1-1/[a])}.
\]
The diagonal residue has already removed the factor indexed by $y$.  Hence its
$\lambda$-summand is
\[
g_{\mu\nu}^{\lambda}
 \Mt^{n(\lambda)-n(\mu)-n(\nu)}
\frac{\displaystyle
 \prod_{b\in\lambda\setminus(\mu\cup\nu)\setminus\{y\}}(1-1/[b])}
{\displaystyle
 \prod_{a\in(\mu\cap\nu)\setminus\{(0,0)\}}(1-1/[a])}
=
\frac{\widehat g_{\mu\nu}^{\lambda}}{1-1/[y]}.
\]
Therefore the residue of the left-hand side is
\[
-\frac{1}{1-1/[y]}
\sum_{\substack{\lambda:\ y\in\lambda\setminus(\mu\cup\nu)}}
\widehat g_{\mu\nu}^{\lambda}.
\]
Equating the two residues and using
$1-1/[y]=([y]-1)/[y]$ proves \eqref{eq:kappa-root-residue}.
\end{proof}

\begin{example}[Root-box residues for $J_{(1)}^2$]
Take $\mu=\nu=(1)$, so
$\bT_{(1)\star(1)}(w)=\mathcal N(w)$.  For the box $y=(0,1)$, with $[y]=\Mq^{-1}$,
the only partition $\lambda$ with $g_{(1),(1)}^{\lambda}\ne0$ containing $y$
is $(2)$.  Hence
\[
\widehat g_{(1),(1)}^{(2)}
=\Mq\operatorname*{Res}_{w=\Mq^{-1}}\mathcal N(w)
=\frac{(1-\Mq)(1-\Mt)}{1-\Mq\Mt}.
\]
For the box $y=(1,0)$, with $[y]=\Mt$, the only such partition containing $y$
is $(1,1)$.  Thus
\[
\widehat g_{(1),(1)}^{(1,1)}
=\frac1\Mt\operatorname*{Res}_{w=\Mt}\mathcal N(w)
=-\frac{(1-\Mq)(1-\Mt)}{1-\Mq\Mt}.
\]
In particular,
\[
\widehat g_{(1),(1)}^{(2)}
+\widehat g_{(1),(1)}^{(1,1)}=0,
\]
in agreement with \eqref{eq:ghat-sum}.
\end{example}

\begin{example}[Solving for $g_{(2,1),(2,1)}^{(3,2,1)}$]\label{ex:g321}
Let $\mu=\nu=(2,1)$ and choose
\[
x=(0,1),
\qquad
y=(1,0).
\]
Both boxes belong to $\mu\cap\nu$, and neither is $(0,0)$.
Then $x+y=(1,1)$ is outside $\mu\cup\nu$.  Of the seven partitions in the
support of $J_{(2,1)}^2$, the condition
$x+y\in\lambda\setminus(\mu\cup\nu)$ selects exactly
\[
(4,2),\quad (3,3),\quad (3,2,1),\quad
(2,2,2),\quad (2,2,1,1).
\]
For brevity write $g^\lambda:=g_{(2,1),(2,1)}^\lambda$.  Substitution in
\eqref{eq:kappa-double-residue} gives
\begin{align}\label{eq:kappa-g321-relation}
0={}&
\frac{(\Mq+1)(\Mq-1)^2(\Mq^3\Mt-1)}{\Mq\Mt-1}\,g^{(4,2)}\notag\\
&-\frac{(\Mt-\Mq)(\Mq-1)(\Mq+1)(\Mq^2\Mt-1)}{\Mq\Mt-1}\,g^{(3,3)}\notag\\
&-\frac{(\Mt-1)(\Mq-1)(\Mq\Mt^2-1)(\Mq^2\Mt-1)}{(\Mq\Mt-1)^2}\,
 g^{(3,2,1)}\notag\\
&+\frac{(\Mt-\Mq)(\Mt-1)(\Mt+1)(\Mq\Mt^2-1)}{\Mq\Mt-1}\,g^{(2,2,2)}\notag\\
&+\frac{(\Mt+1)(\Mt-1)^2(\Mq\Mt^3-1)}{\Mq\Mt-1}\,g^{(2,2,1,1)}.
\end{align}
The coefficient of $g^{(3,2,1)}$ is nonzero, so
\eqref{eq:kappa-g321-relation} determines it from the other four.  We have
verified \eqref{eq:kappa-g321-relation} against the expansion of
$J_{(2,1)}^2$ in the integral Macdonald basis.
\end{example}

\section*{Acknowledgements}
The author thanks the Institut de Math\'ematiques de Jussieu--Paris Rive
Gauche and the organizers of its 2026 summer school \emph{Integrable
Combinatorics} (Sorbonne Universit\'e, Paris, 15--19 June 2026) for their
hospitality; part of this work was done during the school.

Warnaar's finite-rank product identity used in
Section~\ref{ssec:AFLTproof} was identified with assistance from GPT-5.6 Sol
(Max Thinking), which was also used to check the resulting algebraic
reductions.  Claude Opus 5.5 (Extra) was used to review and clarify several
statements and ideas.

\end{document}